\setlist[enumerate]{font=\normalfont}
\setlist[enumerate,1]{label=(\roman*)}
\newtheorem{theorem}{Theorem}[section]
\newtheorem{example}[theorem]{Example}
\newtheorem{lemma}[theorem]{Lemma}
\newtheorem{proposition}[theorem]{Proposition}
\newtheorem{corollary}[theorem]{Corollary}
\theoremstyle{definition}
\newtheorem{definition}[theorem]{Definition}
\theoremstyle{remark}
\newtheorem{remark}[theorem]{Remark}
\newtheorem{problem}[theorem]{Problem}
\newcommand{\cc}{{\mathbb C}}
\newcommand{\qq}{{\mathbb Q}}
\newcommand{\rr}{{\mathbb R}}
\newcommand{\nn}{{\mathbb N}}
\newcommand{\zz}{{\mathbb Z}}
\newcommand{\calP}{\mathcal{P}}
\newcommand{\abs}[1]{\left|{#1}\right|}
\DeclareMathOperator{\rank}{rank}
\author{Teruyuki~Mishima}
\address{Accenture Japan Ltd \\
	2-2-2 Nakanoshima, Kita-ku, Osaka 530-0005 \\
	Japan}
\email{teruyuki.mishima@accenture.com}
\author{Xiao-Nan~Lu}
\address{
	Department of Electrical, Electronics and Computer Engineering,
	Faculty of Engineering, Gifu University \\
	1-1 Yanagido, Gifu-shi, Gifu 501-1193 \\
	Japan}
\email{lu.xiaonan.b3@f.gifu-u.ac.jp}
\author{Masanori~Sawa}
\address{Graduate School of System Informatics, Kobe University \\
	1-1 Rokkodai, Nada, Kobe, Hyogo 657-8501 \\
	Japan}
\email{sawa@people.kobe-u.ac.jp}
\author{Yukihiro~Uchida}
\address{Department of Mathematical Sciences, Graduate School of Science, Tokyo Metropolitan University \\
         1-1 Minami-Osawa, Hachioji, Tokyo 192-0397 \\
         Japan}
\email{yuchida@tmu.ac.jp}
\thanks{This research is supported in part by Grant-in-Aid for Early-Career Scientists 22K13949 and Grant-in-Aid for Scientific Research (C) 20K03517 of the Japan Society for the Promotion of Science (JSPS), and by the Early Support Program for Grant-in-Aid for Scientific Research of Kobe University.}
\keywords{Antipodality, classical orthogonal polynomial, (geometric) design, explicit construction, Hilbert identity, Hilbert-Kamke equations, Prouhet-Tarry-Escott problem, quadrature formula, rationality, Waring problem}
\subjclass[2020]{Primary 05E99, 33C45, 65D32, Secondary 12E10, 11E76}
\begin{document}

\title[Geometric designs and Hilbert-Kamke equations]{Geometric designs and Hilbert-Kamke equations of degree five for classical orthogonal polynomials}

\begin{abstract}
In this paper we elucidate the advantage of examining the connections between Hilbert-Kamke equations and geometric designs, or Chebyshev-type quadrature, for classical orthogonal polynomials.
We first establish that if a $5$-design with $6$ rational points for a symmetric classical measure is parametrized by rational functions, then the corresponding measure should be the Chebyshev measure $(1-t^2)^{-1/2}dt/\pi$ on $(-1,1)$. Our proof is based on the collaboration of a certain polynomial identity and some advanced techniques on the computation of the genus of a certain irreducible curve.
Next, we prove a necessary and sufficient condition for the existence of rational $5$-designs for the Chebyshev measure.
Moreover, as one of our main theorems, we construct an infinite family of ideal solutions for the Prouhet-Tarry-Escott (PTE) problem by utilizing rational $5$-designs for the Chebyshev measure, and then establish that, up to affine equivalence over $\mathbb{Q}$, such ideal solutions are included in the famous parametric solutions found by Borwein (2002).
\end{abstract}

\maketitle

\section{Introduction}
\label{sect:Intro}

Let $w(t)$ be a probability density function on an interval $I \subseteq \rr$, with finite moments
\begin{equation}\label{eq:moment}
    a_k = \int_I t^k w(t) dt, \quad k= 0,1,\dotsc
\end{equation}
An integration formula of type
\begin{equation}\label{eq:QF}
	\frac{1}{n} \sum_{i=1}^n f(x_i) 	= \int_I f(t) w(t) dt,
\end{equation}
where $x_1,\ldots,x_n \in I$, is a \emph{Chebyshev-type quadrature of degree $m$} if the equation (\ref{eq:QF}) holds for every $f \in \mathcal{P}_m$.
We denote by $\mathcal{P}_m$ the vector space of all univariate polynomials of degree at most $m$. 

As implied by various formulas named after Gauss, the construction of quadrature formula has long been studied in mathematical analysis,
in connection with the investigation of the zeros of orthogonal polynomials. For a good introduction to the interrelation between quadrature and orthogonal polynomials, we refer the reader to Dunkl and Xu~\cite[\S~1]{DX2014} and Szeg\H{o}~\cite[\S~15]{Sze1975}.
On the other hand, the configuration of points $x_1,\ldots,x_n$, especially for Gegenbauer measure $(1-t^2)^{\lambda-1/2}dt/\int_{-1}^1 (1-t^2)^{\lambda-1/2}dt$ where $\lambda > -1/2$, has captured the attention of researchers in algebraic combinatorics and related areas.
The point configuration is called an \emph{$m$-design} if all $x_i$ are distinct, and a \emph{weighted $m$-design} otherwise.
A good survey paper on algebro-combinatorial aspects of quadrature is Bannai and Bannai~\cite{BB2009}.

Most of the designs appearing in the present paper are \emph{rational}, namely the point configuration is included in $\qq$.
The notion of rational design originally goes back to Hausdorff~\cite{Hau09}, who established the existence of a weighted rational design for Hermite measure $e^{-t^2}dt/\sqrt{\pi}$ on $(-\infty,\infty)$ and thereby simplified Hilbert's solution of the Waring problem~\cite{Hi09}.
Interest was revived by Nesterenko~\cite[p.4700]{N06}, where a refinement of Hausdorff's method is discussed; see also Subsection~\ref{subsect:Hausdorff} of this paper. 
It is noteworthy that the construction of rational designs can be applied in the realization of solutions of the Waring problem.

The construction of rational designs is also significant in algebraic combinatorics, in particular in the theory of spherical designs.
A challenging problem, asked by Bannai, Bannai, Ito and Tanaka~\cite[p.208, Problem 2]{BBI2016}, is to present an explicit construction of rational designs for Gegenbauer measure $(1-t^2)^{\lambda-1/2}dt/\int_{-1}^1 (1-t^2)^{\lambda-1/2}dt$.
Such constructions, if realized, can be applied in the explicit construction of spherical designs~\cite{B1992,RB1991,W1991}.
What is meant by  \emph{explicit construction} is that it generates a design for which the components of each point are written down explicitly.
It is still an open problem to present an explicit construction of rational designs for Gegenbauer measure in general; see for example~\cite[pp.347--348]{X2022}. An important aim of this paper is to make substantial progress on this topic.

On the other hand, a rational weighted $m$-design of type \eqref{eq:QF} is equivalent to a solution of a system of Diophantine equations of type
\begin{equation}\label{eq:HK0}
     \begin{split}
        x_1 + x_2 + \dotsb + x_n &= n a_1, \\
        x_1^2 + x_2^2 + \dotsb + x_n^2 &= n a_2, \\
        & \vdotswithin{=} \\
        x_1^m + x_2^m + \dotsb + x_n^m &= n a_m.
    \end{split}
\end{equation}
In particular an $m$-design is equivalent to a \emph{disjoint solution} of (\ref{eq:HK0}), meaning that all $x_i$ are mutually distinct.
We call (\ref{eq:HK0}) \emph{Hilbert-Kamke equations}, following Cui, Xia and Xiang~\cite{CXX2019}.
A systematic treatment of the equations (\ref{eq:HK0}) was established by Sawa and Uchida~\cite{SU2020} for probability measures that correspond to the classical orthogonal polynomials. It is well known (see~\cite[Subsection 2.1]{SU2020}) that such measures are completely classified by Hermite, Laguerre and Jacobi measures.
The work of Sawa and Uchida~\cite{SU2020} was extended to Bessel measure over $\cc$ by Matsumura~\cite{M23}.

We are here mainly concerned with \emph{symmetric classical measures},
that is Hermite or Gegenbauer measure, for which the odd moments $a_{2k+1}$ are identically zero.
An $m$-design is \emph{antipodal} if the configuration of points $x_1,\ldots,x_n$ is symmetric with respect to the origin $0$.
An antipodal $m$-design with $2N$ points for symmetric classical measures is equivalent to the equations
\begin{equation}\label{eq:symHK}
    x_1^{2k} + x_2^{2k} + \dotsb + x_N^{2k} = N a_{2k}, \quad k=1,2,\ldots,r,
\end{equation}
where $r$ is the integer part of $m/2$.
A main purpose of this paper is to make clear the speciality of rational designs or Hilbert-Kamke equations for symmetric classical measures, especially for the Chebyshev measure $\pi^{-1} (1-t^2)^{-1/2}dt$ on $(-1,1)$.

This paper is also motivated by a recent work by Misawa et al.~\cite{MMS2024}, where conditions for various geometric designs to be antipodal are discussed. Their work includes an interesting observation that if $m \equiv 1 \pmod{2}$ and $n \le m+1$, then interval $m$-designs with $n$ points for symmetric classical measures have only antipodal solutions; for more details, see Subsection~\ref{subsect:geometric} of this paper. Inspired by their work, we restrict our attention to antipodal designs of odd degrees for symmetric classical measures.

The smallest nontrivial case to be considered is the antipodal $3$-design. In this case, we  prove a necessary and sufficient condition for the existence of antipodal $3$-designs for the Hermite measure.

\begin{theorem}\label{thm:deg3}
There exists an antipodal $3$-design with $n$ rational points for the Hermite measure $e^{-t^2}dt/\sqrt{\pi}$ on $(-\infty,\infty)$ if and only if $n \notin \{1,2,3,7\}$.
\end{theorem}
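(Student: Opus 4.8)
The plan is to reduce the problem to an elementary Diophantine question about sums of rational squares and then settle it by a short case analysis together with a single inductive step.

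\textbf{Reduction.} For the Hermite measure one has $a_1=0$ and $a_2=\int_{-\infty}^{\infty}t^2e^{-t^2}\,dt/\sqrt{\pi}=1/2$, and all odd moments vanish. By \eqref{eq:symHK} with $m=3$ (so $r=1$) -- or directly, since both the measure and any antipodal configuration are symmetric about $0$ -- the quadrature identity \eqref{eq:QF} is automatic for $f\in\{1,t,t^3\}$, and the only remaining constraint is $\sum_i x_i^2=na_2=n/2$. A parity argument shows that $0$ lies in the configuration precisely when $n$ is odd. Hence for $n=2N$ we must find distinct positive rationals $0<y_1<\dots<y_N$ with $\sum_{i=1}^N y_i^2=N/2$, and for $n=2N+1$ we must find such $y_i$ with $\sum_{i=1}^N y_i^2=(2N+1)/4$ (the remaining point then being $0$).

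\textbf{Necessity.} For $n=1,2,3$ the constraint forces $x_1^2=1/2$, $y_1^2=1/2$, $y_1^2=3/4$ respectively, and neither $1/2$ nor $3/4$ is a rational square. For $n=7$ it forces $y_1^2+y_2^2+y_3^2=7/4$, i.e.\ $(2y_1)^2+(2y_2)^2+(2y_3)^2=7$; but $7$ is not a sum of three rational squares, since by the Davenport--Cassels lemma a positive integer is a sum of three rational squares iff it is a sum of three integer squares, and $7=4^0(8\cdot0+7)$ is excluded by the three-square theorem. Thus no antipodal $3$-design with $n\in\{1,2,3,7\}$ rational points exists.

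\textbf{Sufficiency.} It suffices to produce, for each residue class modulo $4$, one antipodal $3$-design whose size is the smallest admissible $n$ in that class, together with a step raising $n$ by $4$. The step: given an antipodal $3$-design $X$ with $n$ rational points, choose a rational point $(p,q)$ on the unit circle $u^2+v^2=1$ with $0<p<q$ and $p,q\notin X$; this is possible because the circle carries infinitely many rational points (parametrized by $u=(1-t^2)/(1+t^2)$, $v=2t/(1+t^2)$, $t\in\qq\cap(0,1)$, with $p\ne q$ always, as $t^2+2t-1=0$ has no rational root) while $X$ is finite. Then $X\cup\{\pm p,\pm q\}$ has $n+4$ distinct points and $\sum x^2$ grows by $2(p^2+q^2)=2=(n+4)/2-n/2$, so it is again an antipodal $3$-design. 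As base cases take $n=4$: $\{\pm\tfrac{3}{5},\pm\tfrac{4}{5}\}$; $n=5$: $\{0,\pm1,\pm\tfrac{1}{2}\}$; $n=6$: $\{\pm1,\pm\tfrac{7}{10},\pm\tfrac{1}{10}\}$; and $n=11$: $\{0,\pm\tfrac{3}{2},\pm\tfrac{1}{2},\pm\tfrac{3}{10},\pm\tfrac{6}{25},\pm\tfrac{8}{25}\}$, for which $\tfrac{9}{4}+\tfrac{1}{4}+\tfrac{9}{100}+\tfrac{36}{625}+\tfrac{64}{625}=\tfrac{11}{4}$. Iterating the step from these four designs yields an antipodal $3$-design for every $n\ge4$ with $n\ne7$, which together with the necessity part proves the theorem.

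\textbf{Main obstacle.} The only substantive ingredient is the classical fact that $7$ is not a sum of three rational squares: it simultaneously produces the exception $n=7$ and, because the two smallest members of the residue class $3\bmod4$ are exactly the exceptions $3$ and $7$, forces the base case in that class up to $n=11$. Consequently the one place where care is needed is writing down an explicit $11$-point rational design (rather than lifting from something smaller); everything else is routine bookkeeping.
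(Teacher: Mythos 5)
Your proof is correct and follows essentially the same route as the paper: reduce the antipodal $3$-design condition to the single second-moment equation $\sum_i x_i^2=n/2$, rule out $n=7$ via the Gauss three-square obstruction for sums of three rational squares, and build all admissible $n$ by composing small antipodal designs using the infinitude of rational points on a circle. The only cosmetic differences are that you invoke the Davenport--Cassels lemma where the paper proves the rational three-square criterion directly (its Lemma~\ref{lem:deg7_Hermite}), and you use a ``$+4$'' induction with an explicit $11$-point base case where the paper instead composes disjoint parametrized $4$- and $5$-point designs with a fixed $6$-point design (Proposition~\ref{prop:composition}), so that $11=5+6$ needs no separate construction.
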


An analogue of Theorem~\ref{thm:deg3} is also discussed for the Gegenbauer measures.

The next case to be handled is when $m=5$.
The degree five or four case is of special interest in the connection among Hilbert-Kamke equations, geometric designs and the Waring problem; see Remark~\ref{rem:KawadaWooley2} and Proposition~\ref{prop:KawadaWooley} of this paper.
The following is a main theorem of this paper.

\begin{theorem}\label{thm:parametrization0}
If a $5$-design with $6$ rational points for symmetric classical measures is parametrized by rational functions, then $3a_2^2=2a_4$. Moreover, the only symmetric classical measure with $3a_2^2=2a_4$ is the Chebyshev measure $(1-t^2)^{-1/2}dt/\pi$ on $(-1,1)$.
\end{theorem}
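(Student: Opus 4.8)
The plan is to recast the statement as a genus computation. By the antipodality result of Misawa et al.\ recalled above, a $5$-design with $6$ points for a symmetric classical measure must be antipodal, say $\{\pm x_1,\pm x_2,\pm x_3\}$, so by \eqref{eq:symHK} it is the same thing as a rational point of the affine curve
\begin{equation*}
V=V_{a_2,a_4}\colon\qquad x_1^2+x_2^2+x_3^2=3a_2,\qquad x_1^4+x_2^4+x_3^4=3a_4\qquad\text{in }\mathbb{A}^3 .
\end{equation*}
To parametrize the family of designs by rational functions is to give a nonconstant map $\mathbb{A}^1\to V$ (or, more generally, a dominant rational map from affine space), whose image is then a rational, i.e.\ geometric genus $0$, component of $V$. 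Hence it suffices to prove: (a) if $3a_2^2\neq2a_4$ then every component of $V$ has geometric genus $\geq1$; and (b) among symmetric classical measures only the Chebyshev measure satisfies $3a_2^2=2a_4$. Part (b) is a short moment computation ($a_2=1/2,\ a_4=3/4$ for Hermite, whereas the Gegenbauer measure $(1-t^2)^{\lambda-1/2}\,dt/\int_{-1}^1(1-t^2)^{\lambda-1/2}\,dt$ has $a_2=\tfrac1{2(\lambda+1)}$ and $a_4=\tfrac3{4(\lambda+1)(\lambda+2)}$; since $3a_2^2-2a_4$ scales homogeneously under $t\mapsto ct$, the equation $3a_2^2=2a_4$ forces $\lambda=0$ and rules out Hermite), so the work is in part (a).

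The algebraic input is the identity
\begin{equation*}
\prod_{\varepsilon_2,\varepsilon_3\in\{1,-1\}}\bigl(x_1+\varepsilon_2 x_2+\varepsilon_3 x_3\bigr)\;=\;2\bigl(x_1^4+x_2^4+x_3^4\bigr)-\bigl(x_1^2+x_2^2+x_3^2\bigr)^2 ,
\end{equation*}
whose right-hand side equals the constant $6a_4-9a_2^2=3(2a_4-3a_2^2)$ on $V$. When $3a_2^2=2a_4$ this constant vanishes, so $V$ is the union of the four conics $V\cap\{x_1+\varepsilon_2 x_2+\varepsilon_3 x_3=0\}$; substituting the linear relation into $x_1^2+x_2^2+x_3^2=3a_2$ shows each to be a nondegenerate conic, hence rational — and for the Chebyshev measure one checks the resulting parametrization yields genuine $5$-designs, so $3a_2^2=2a_4$ is exactly where rational parametrizations live. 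When $3a_2^2\neq2a_4$ none of the four linear forms vanishes on $V$, and one must rule out a genus $0$ component by a direct genus estimate.

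For this I would use the degree-$8$ abelian cover $\pi\colon V\to C$ obtained by setting $u_i=x_i^2$, where
\begin{equation*}
C\colon\qquad u_1+u_2+u_3=3a_2,\qquad u_1^2+u_2^2+u_3^2=3a_4 ,
\end{equation*}
so that $\pi$ adjoins $\sqrt{u_1},\sqrt{u_2},\sqrt{u_3}$ to $\overline{\qq}(C)$ and $C\cong\mathbb{P}^1$ over $\overline{\qq}$ except on the locus $a_4=a_2^2$, where $C$ degenerates and a direct analysis on the model $x_i^2=a_2+\omega^{i-1}w$ ($\omega$ a primitive cube root of unity) gives $g(V)=1$. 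Parametrizing $C$ rationally makes $V$ birational to the irreducible curve $\{x_i^2=u_i(s):i=1,2,3\}$, a triple fibre product of double covers of the $s$-line, whose genus I would compute by Riemann--Hurwitz for $\pi$: the branch divisor on $C$ is the union of the zeros of $u_1,u_2,u_3$ (the two points in which each line $\{u_i=0\}$ meets $C$) together with the two points at infinity of $C$, which are common simple poles of all three $u_i$; analyzing the inertia of the abelian cover locally (at a simple common pole $\sqrt{u_iu_j}$ is unramified, so the inertia has order $2$; at a simple zero of a single $u_i$ the inertia has order $2$) gives, generically, $2g(V)-2=8(-2)+8\cdot4=16$, hence $g(V)=9$. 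The genus drops only when branch points collide, and a short calculation shows that this happens only along the loci $3a_2^2=2a_4$ (each $\{u_i=0\}$ tangent to $C$; there $\pi$ is disconnected and $V$ splits into the four conics above) and $a_4=3a_2^2$ (two of the lines $\{u_i=0\}$ meet on $C$, yielding $g(V)=3$, the Hermite case). Since $3a_2^2\neq2a_4$ excludes the only disconnecting degeneration, $\pi$ is connected and $g(V)\geq1$ in all remaining cases, which proves (a).

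The step I expect to be the main obstacle is precisely this genus computation: one must control the ramification of $\pi$ over the two points at infinity of $C$, verify that $\pi$ is connected whenever $3a_2^2\neq2a_4$ — equivalently, that none of $u_1,u_2,u_3,u_1u_2,u_1u_3,u_2u_3,u_1u_2u_3$ is a square in $\overline{\qq}(C)$ — and confirm that no collision of branch points other than the tangency $3a_2^2=2a_4$ can produce a rational component. This is exactly where the finer genus-computation techniques alluded to in the abstract are needed.
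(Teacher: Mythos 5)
Your proposal is correct and follows the paper's overall skeleton --- antipodality reduces the problem to the space curve $x_1^2+x_2^2+x_3^2=3a_2$, $x_1^4+x_2^4+x_3^4=3a_4$, a rational parametrization forces a genus-$0$ component, the product identity $\prod(x_1\pm x_2\pm x_3)=2\sum x_i^4-\bigl(\sum x_i^2\bigr)^2$ (the same factorization as in case (iii) of Lemma~\ref{lemma:curve_HK}, closely related to \eqref{eq:KawadaWooley}) exhibits $3a_2^2=2a_4$ as exactly the locus where the curve splits into four rational conics, and the closing moment computation is identical (your scaling aside is unnecessary: the explicit Gegenbauer moments already force $\lambda=0$, and Hermite fails since $3(1/2)^2\neq 2\cdot(3/4)$) --- but the key genus computation is carried out by a genuinely different method. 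The paper homogenizes and invokes the Browning--Heath-Brown nonsingular-system lemma together with the complete-intersection genus formula (genus $9$ generically), the $\delta$-invariant formula at the six singular points when $\alpha^2=\beta$ (genus $3$, with irreducibility checked by Magma), and explicit factorizations in the remaining cases; you instead pass to the conic $C$ in the variables $u_i=x_i^2$ and apply Riemann--Hurwitz to the $(\zz/2)^3$ sign-change cover. Your sketched bookkeeping does close: the branch locus consists of the six points of $\{u_i=0\}\cap C$ plus the two points at infinity, each with inertia of order $2$, giving $2g-2=8(-2)+8\cdot 4$, i.e.\ $g=9$ generically; tangency of $\{u_i=0\}$ with $C$ occurs exactly when $3a_2^2=2a_4$ (the split case), a common zero $u_i=u_j=0$ on $C$ occurs exactly when $a_4=3a_2^2$ (five branch points, $g=3$, the Hermite case), and $C$ degenerates into two lines exactly when $a_4=a_2^2$ (over each line a connected cover branched at four points, $g=1$), reproducing the trichotomy of Lemma~\ref{lemma:curve_HK}; connectivity off the locus $3a_2^2=2a_4$ holds because each $u_i$ then has two distinct simple zeros on $C$ while $u_1u_2u_3$ has odd-order poles at infinity, so none of the seven nontrivial products is a square in $\overline{\qq}(C)$. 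What your route buys is an elementary, computer-free treatment in which irreducibility comes from square-class considerations rather than Magma and the complete-intersection machinery; what the paper's route buys is a shorter case analysis via standard formulas. The only outstanding task in your version is to write out in full the ramification and connectivity checks you flag at the end; as indicated above, they all go through and yield exactly the genera the paper obtains.
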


Our proof is based on the calculation of the genus of a certain irreducible curve.
More precisely, the equations (\ref{eq:symHK}) define a curve that is not necessarily irreducible.
An irreducible curve is parametrized by rational functions if and only if the curve has genus $0$; for example, see \cite[(2.16)]{Reid}.
Therefore, the proof of Theorem~\ref{thm:parametrization0} reduces to the calculation of the genus of each irreducible component of the curve defined by (\ref{eq:symHK}).
Such calculations of genera were also done in the work of Sawa and Uchida~\cite{SU2020} to examine Hilbert-Kamke equations (\ref{eq:HK0}).
However, we require a different algebro-geometric method in this paper since the curve defined by (\ref{eq:symHK}) is a space curve that is not necessarily irreducible.
It is another aim of this paper to introduce this method to geometric design theory.

Kawada and Wooley~\cite{KW1999} clarified the validity of a certain polynomial identity in the study of the Waring problem for biquadrates (see (\ref{eq:KawadaWooley})).
Theorem~\ref{thm:parametrization0} not only provides theoretical evidence for the difficulty of the existence of rational designs for Gegenbauer measures in general, but also shows a distinctive feature of the Chebyshev measure from the viewpoints of algebraic geometry and combinatorics.

The following two results for the Chebyshev measure are one of the main theorems in this paper.
Their proofs are based on the collaboration of techniques in geometric design theory, elementary number theory, and the same polynomial identity as that used in the proof of Theorem~\ref{thm:parametrization0}.

\begin{theorem}\label{thm:fivesix}
The $5$-designs with $6$ rational points for Chebyshev measure $(1-t^2)^{-1/2}dt/\pi$ on $(-1,1)$ are parametrized by rational functions
\begin{equation}\label{eq:fivesix1_0}
\begin{gathered}
	x_1 =  \frac{2t^2-22t-13}{14(t^2+t+1)}, \;
	x_2 =  \frac{-13t^2-4t+11}{14(t^2+t+1)}, \;
	x_3 =  \frac{11t^2+26t+2}{14(t^2+t+1)}, \\
     x_{i+3} = -x_i \quad \text{for $i=1,2,3$},
\end{gathered}
\end{equation}
where $t \in \qq$.
\end{theorem}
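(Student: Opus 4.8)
The plan is to reduce, via antipodality, to describing the rational points of one explicit space curve, and then to split that curve into conics using the polynomial identity from the proof of Theorem~\ref{thm:parametrization0}.

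First I would apply the antipodality theorem of Misawa et al.~\cite{MMS2024}: since $m=5$ is odd and $n=6=m+1$, every interval $5$-design with $6$ points for a symmetric classical measure is antipodal. Hence a rational $5$-design with $6$ points for the Chebyshev measure is a set $\{x_1,x_2,x_3,-x_1,-x_2,-x_3\}$ of six distinct rationals in $(-1,1)$ (equivalently $x_i\ne 0$ and $x_i\ne\pm x_j$ for $i\ne j$), and by \eqref{eq:symHK} with $N=3$, $r=2$ and the Chebyshev moments $a_2=\tfrac{1}{2}$, $a_4=\tfrac{3}{8}$, the triple $(x_1,x_2,x_3)$ is a rational point of the curve
\[
  C\colon\quad x_1^2+x_2^2+x_3^2=\tfrac{3}{2},\qquad x_1^4+x_2^4+x_3^4=\tfrac{9}{8}.
\]
Conversely every rational point of $C$ satisfying the distinctness condition yields such a design; so it suffices to parametrize the rational points of $C$.

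Next I would decompose $C$ using the polynomial identity \eqref{eq:KawadaWooley} from the proof of Theorem~\ref{thm:parametrization0}, which factors $2(x_1^4+x_2^4+x_3^4)-(x_1^2+x_2^2+x_3^2)^2$ as the product of the four linear forms $\epsilon_1x_1+\epsilon_2x_2+\epsilon_3x_3$ with $\epsilon\in\{\pm1\}^3$ and $\epsilon_1\epsilon_2\epsilon_3=1$. On $C$ this expression vanishes because $2\cdot\tfrac{9}{8}=(\tfrac{3}{2})^2$, so $C$ is contained in the union of the four conics $C_\epsilon\colon \epsilon_1x_1+\epsilon_2x_2+\epsilon_3x_3=0,\ x_1^2+x_2^2+x_3^2=\tfrac{3}{2}$, and the same identity shows each $C_\epsilon\subseteq C$; hence $C=\bigcup_\epsilon C_\epsilon$. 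Each $C_\epsilon$ is an irreducible conic (after eliminating one coordinate, the binary quadratic form $x_1^2+x_1x_2+x_2^2$ that appears has negative discriminant, so the conic is not a pair of lines), and the four components are permuted by the coordinate sign changes $x_i\mapsto -x_i$, which do not change the associated $6$-point design. It therefore suffices to treat the single conic $C_0\colon x_1+x_2+x_3=0,\ x_1^2+x_2^2+x_3^2=\tfrac{3}{2}$, which after substituting $x_3=-x_1-x_2$ becomes the plane conic $x_1^2+x_1x_2+x_2^2=\tfrac{3}{4}$. This conic has the rational point $(\tfrac{1}{2},\tfrac{1}{2},-1)$, hence is rational; parametrizing it by the pencil of lines through a suitable rational point, clearing denominators, and normalizing the parameter by an affine substitution $t\mapsto\alpha t+\beta$ (so that the common denominator becomes $14(t^2+t+1)$, the value of $x_1^2+x_1x_2+x_2^2$ at the direction $(t:1)$) yields exactly \eqref{eq:fivesix1_0}; conversely one checks \eqref{eq:fivesix1_0} directly against \eqref{eq:symHK}. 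Finally, since a design is the \emph{set} $\{\pm x_1,\pm x_2,\pm x_3\}$, the rational points on $C_\epsilon$ for $\epsilon\ne(+,+,+)$ produce no new designs, while the values of $t$ at which two of the six points coincide form an explicit finite set; discarding them, \eqref{eq:fivesix1_0} runs over precisely the rational $5$-designs.

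The only substantial input is the decomposition $C=\bigcup_\epsilon C_\epsilon$ coming from \eqref{eq:KawadaWooley}; once $C$ is recognized as a union of four rational conics, the remainder is conic parametrization and bookkeeping. The steps needing care are the two inclusions in $C=\bigcup_\epsilon C_\epsilon$ together with the irreducibility of each $C_\epsilon$ (so that no sporadic rational points are missed), the symmetry argument that the non-principal components add nothing new, and the precise list of degenerate parameter values. I expect the main obstacle to be organizing this last bookkeeping cleanly enough that the statement ``$t\in\qq$'' is literally correct (up to the understood finitely many degeneracies); the geometric content itself is light once the Misawa antipodality reduction and the identity \eqref{eq:KawadaWooley} are in hand.
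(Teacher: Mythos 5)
Your proposal is correct and is essentially the paper's own argument: antipodality reduces to the curve $x_1^2+x_2^2+x_3^2=\tfrac32$, $x_1^4+x_2^4+x_3^4=\tfrac98$, the Kawada--Wooley identity \eqref{eq:KawadaWooley} splits it into the four conics cut out by $x_1\pm x_2\pm x_3=0$ (this is the content of Lemma~\ref{lem:5-6-1}, resp.\ Lemma~\ref{lemma:curve_HK}(iii)), and one conic, $x_1^2+x_1x_2+x_2^2=\tfrac34$, is parametrized by the pencil of lines through a rational point. The only cosmetic differences are that the paper fixes the component via the ordering $0<x_1<x_2<x_3$ rather than your sign-change symmetry, and parametrizes directly through $(\tfrac17,\tfrac{11}{14})$ so that \eqref{eq:fivesix1_0} appears at once (note the change of parameter from your base point $(\tfrac12,\tfrac12)$ is in general fractional linear rather than affine, though this does not affect the conclusion).
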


\begin{theorem}\label{thm:spectrum0}
There exists an antipodal $5$-design with $2N$ rational points for Chebyshev measure $(1-t^2)^{-1/2}dt/\pi$ on $(-1,1)$ if and only if
\[
    \begin{split}
        N=3k \ \text{ with } \ k \ge 1, \\
        N=3k+1 \ \text{ with } \ k \ge 6, \\
        N=3k+2 \ \text{ with } \ k \ge 3.
    \end{split}
\]
\end{theorem}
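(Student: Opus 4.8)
The plan is to prove Theorem~\ref{thm:spectrum0} by combining a small stock of explicit ``seed'' designs with a gluing operation that lets us increase the number of points in a controlled way, and then to rule out the sporadic small values of $N$ by an arithmetic obstruction. For the \emph{existence} direction, the basic building block is provided by Theorem~\ref{thm:fivesix}: specializing the rational parameter $t$ gives an antipodal $5$-design with $2N$ rational points for $N=3$. The key combinatorial move is that the union of two antipodal $5$-designs (with $2N_1$ and $2N_2$ points) for the same measure, after the obvious renormalization, is again an antipodal $5$-design with $2(N_1+N_2)$ points; this is immediate from the linearity of the defining equations \eqref{eq:symHK}. Thus from an $N=3$ design one obtains every $N=3k$, $k\ge 1$. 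To reach the residue classes $1$ and $2$ modulo $3$ one needs two further seeds: an antipodal $5$-design with $N\equiv 1\pmod 3$ (the claim $N\ge 19$, i.e.\ $k\ge 6$, suggests the smallest available seed is $N=19$) and one with $N\equiv 2\pmod 3$ (smallest seed $N=11$, i.e.\ $k\ge 3$). Once these two seeds are in hand, adding copies of the $N=3$ design yields all larger $N$ in the respective classes. Producing those two seeds explicitly is where the polynomial identity of Kawada--Wooley \eqref{eq:KawadaWooley} enters: it furnishes a rational Hilbert identity for fourth powers that, suitably scaled and combined with the degree-$3$ design, can be assembled into a rational $5$-design for the Chebyshev measure of the required sizes.

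For the \emph{nonexistence} direction one must show there is no antipodal rational $5$-design for the remaining small values: $N\in\{1,2\}$ and the omitted cases $N\equiv1\pmod 3$ with $k<6$ and $N\equiv2\pmod 3$ with $k<3$. Here the strategy is an arithmetic/valuation argument applied to the equations \eqref{eq:symHK} with $r=2$, i.e.
\[
    \sum_{i=1}^N x_i^2 = N a_2, \qquad \sum_{i=1}^N x_i^4 = N a_4,
\]
where for the Chebyshev measure $a_2 = 1/2$ and $a_4 = 3/8$. Clearing denominators and reducing modulo a well-chosen prime (the analysis for Theorem~\ref{thm:deg3}, which excludes $n\in\{1,2,3,7\}$ by exactly such congruence obstructions, is the template) should show that for the excluded $N$ the system forces a contradiction modulo that prime, or forces all $x_i$ to have a common denominator that cannot be cleared consistently. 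The small cases $N=1$ and $N=2$ are essentially by inspection: $N=1$ would need $x_1^2 = 1/2$, impossible over $\qq$; $N=2$ would need $x_1^2+x_2^2=1$ and $x_1^4+x_2^4 = 3/4$, whence $(x_1^2+x_2^2)^2 - 2x_1^2x_2^2 = 3/4$ gives $x_1^2 x_2^2 = 1/8$, and then $x_1^2,x_2^2$ are roots of $u^2 - u + 1/8$, whose discriminant $1/2$ is not a rational square.

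The main obstacle I expect is the nonexistence part for the ``deep'' sporadic values --- that is, proving that $N=3k+1$ genuinely fails for $1\le k\le 5$ and $N=3k+2$ fails for $k\le 2$, rather than merely for the trivially small $N$. A blanket congruence modulo a single prime is unlikely to cut out exactly this finite list, so one probably needs either a more refined local analysis (several primes, or a $2$-adic argument exploiting that $a_2,a_4$ have denominators that are powers of $2$) or a genuinely global argument: for instance, bounding $\sum x_i^2 = N/2$ together with $\sum x_i^4 = 3N/8$ via power-mean inequalities to constrain how ``spread out'' the $x_i$ can be, then combining that geometric constraint with the $2$-adic denominator bookkeeping. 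I would first pin down the exact $2$-adic valuations forced on the $x_i$ (each $x_i$ must have denominator divisible by a fixed power of $2$ once $N$ is fixed), translate \eqref{eq:symHK} into an equation over $\zz$ after scaling, and then run a case check on the resulting integer system for each excluded $N$; I expect the Cauchy--Schwarz-type inequality $\bigl(\sum x_i^2\bigr)^2 \le N \sum x_i^4$, which here reads $N^2/4 \le N\cdot 3N/8 = 3N^2/8$ and is satisfied, to need sharpening into an equality-discrepancy estimate to close the smallest surviving cases.
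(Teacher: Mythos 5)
Your outline matches the paper's strategy (seed designs plus the union/composition move for existence; $2$-adic congruences for nonexistence), but two essential pieces are missing. First, the existence direction stands or falls on actually producing the seeds for the residue classes $1$ and $2$ modulo $3$, and your proposal does not produce them. Unions of specializations of the parametric family from Theorem~\ref{thm:fivesix} only ever give $N\equiv 0\pmod 3$, and the Kawada--Wooley identity \eqref{eq:KawadaWooley} does not ``assemble'' into designs of size $N=11$ or $N=19$: that identity is what makes the size-$3$ family work (it is the engine behind Lemma~\ref{lem:5-6-1}), but it gives no mechanism for hitting the other residue classes. The paper has to exhibit explicit point configurations --- an $11$-point half-set \eqref{eq:22points-1} and a $19$-point half-set \eqref{eq:38points-1}, the latter found by a computer search (Remark~\ref{rem:computational}) --- and without such seeds your existence argument proves only the case $N=3k$. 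Also, even for $N=3k$ you need the specializations at different parameters to be pairwise distinct before Proposition~\ref{prop:composition} applies; the paper secures this with Lemma~\ref{lem:existence1} (the three coordinate families have pairwise finite intersection), a point your sketch passes over.

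Second, the nonexistence direction is only a plan, and the plan as stated would not close all the excluded cases. Your instinct ($2$-adic bookkeeping: clear denominators, count the odd numerators, reduce modulo $16$) is exactly the paper's method (Lemmas~\ref{lem:nonexistR} and~\ref{lem:nonexist}), and it does dispose of $N\in\{1,2,4,5,7,10,13\}$. But for $N=8$ and $N=16$ the modulo-$16$ count is consistent, and the paper needs a finer analysis modulo $32$, splitting the odd $m_i$ according to $m_i^2\bmod 32$ and playing the quadratic equation off against the quartic one; your proposal does not identify these two cases as the hard ones, and the power-mean/Cauchy--Schwarz refinement you suggest cannot help, since (as you note yourself) the inequality is comfortably satisfied for all $N$. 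So as written the argument would leave $N=8$ and $N=16$ unexcluded, and the seeds for $N\equiv 1,2\pmod 3$ unconstructed.
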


Now, in this paper, we also consider a different type of Diophantine problem called the \emph{Prouhet-Tarry-Escott (PTE) problem}.
Borwein~\cite[Chapter 21]{B2002} presents a brief survey on this subject, including some parametric solutions of the PTE problem together with various individual examples.
We also refer the reader to Dickson's book~\cite[\S~24]{D1920}.

\begin{problem}[Prouhet-Tarry-Escott problem]
\label{prob:PTE1}
For positive integers $M$ and $N$, find a pair of disjoint multisets of integers, say
$X = \{ x_1, \ldots, x_N \}$ and $Y = \{ y_1, \ldots, y_N \}$, such that
\begin{equation}
\label{eq:PTE1}
    \begin{split}
        x_1 + \cdots + x_N &= y_1  + \cdots + y_N, \\
        x_1^2 + \cdots + x_N^2 &= y_1^2 + \cdots + y_N^2, \\
         & \vdots\hspace{25mm} \\
        x_1^M + \cdots + x_N^M &= y_1^M + \cdots + y_N^M.
    \end{split}
\end{equation}
Such a solution is said to have  \emph{degree $M$} and \emph{size $N$}, and is denoted by $[X] =_M^N [Y]$.
\end{problem}

It is well known (cf.~\cite[Chapter 21]{B2002}) that if $[X] =_M^N [Y]$, then $N \ge M+1$; a solution with equality is \emph{ideal}.
The existence of ideal solutions is a central core topic in the study of the PTE problem.
The following degree-five solution due to Borwein is one of the few known ideal parametric solutions:
\begin{equation}\label{eq:Borwein2}
\begin{gathered}
	x_1 = 2 n + 2 m, \quad
	x_2 = -n m - n - m + 3, \quad
	x_3 = n m - n - m - 3, \\
	y_1 = 2 n - 2 m, \quad
	y_2 = - n + n m + m + 3, \quad
	y_3 = m - n m - n - 3, \\
    x_{3+i} = -x_i, \; y_{3+i} = -y_i, \quad i = 1,2,3,
\end{gathered}
\end{equation}
where $m,n \in \mathbb{Z}$.

The following result (Theorem~\ref{thm:PTEparametric}) creates a novel connection between rational interval designs and the PTE problem, where ideal solutions of the PTE problem are obtained from the $5$-designs (\ref{eq:fivesix1_0}). A two-dimensional extension of Theorem~\ref{thm:PTEparametric} or the Borwein solution has been recently discussed by Matsumura et al.~\cite{MS2025}.

\begin{theorem} \label{thm:PTEparametric}
Let
\begin{equation}
\label{eq:PTE2}
X = \{ \pm a_{s,t}b_{u,v}, \pm a_{s,t}c_{u,v}, \pm a_{s,t}d_{u,v} \}, \;
Y = \{ \pm a_{u,v}b_{s,t}, \pm a_{u,v}c_{s,t}, \pm a_{u,v}d_{s,t} \},
\end{equation}
where
\begin{equation}
\label{eq:PTE3}
\begin{gathered}
a_{\alpha,\beta} = \alpha^2 + \alpha \beta + \beta^2, 
b_{\alpha,\beta} = 2\alpha^2 - 22\alpha \beta - 13\beta^2, 
c_{\alpha,\beta} = -13\alpha^2-4\alpha \beta +11 \beta^2, \\
d_{\alpha,\beta} = 11\alpha^2 + 26\alpha \beta + 2 \beta^2.
\end{gathered}
\end{equation}
Then $(X,Y)$ is an ideal solution of the equations (\ref{eq:PTE1}) for $(M,N) = (5,6)$.
Moreover, up to affine equivalence over $\qq$, the solution $(X,Y)$ is included in the Borwein solution (\ref{eq:Borwein2}).
\end{theorem}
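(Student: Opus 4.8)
The plan is to deduce Theorem~\ref{thm:PTEparametric} from the direction of Theorem~\ref{thm:fivesix} asserting that (\ref{eq:fivesix1_0}) is a $5$-design for every rational $t$. \emph{The ideal solution.} First note that (\ref{eq:PTE2})--(\ref{eq:PTE3}) is exactly the homogenisation of (\ref{eq:fivesix1_0}): putting $t=\alpha/\beta$ in (\ref{eq:fivesix1_0}) and multiplying numerator and denominator by $\beta^2$ turns $x_1,x_2,x_3$ into $b_{\alpha,\beta}/(14a_{\alpha,\beta})$, $c_{\alpha,\beta}/(14a_{\alpha,\beta})$, $d_{\alpha,\beta}/(14a_{\alpha,\beta})$, with $a_{\alpha,\beta}=(\alpha+\tfrac12\beta)^2+\tfrac34\beta^2>0$ for $(\alpha,\beta)\neq(0,0)$. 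Hence $\frac1{14a_{s,t}a_{u,v}}X$ and $\frac1{14a_{s,t}a_{u,v}}Y$ are the designs (\ref{eq:fivesix1_0}) at the parameters $u/v$ and $s/t$, so by Theorem~\ref{thm:fivesix} both are $5$-designs with $6$ rational points for the Chebyshev measure. Applying (\ref{eq:QF}) to $f(t)=t^k$, $k=0,1,\dots,5$, to each and clearing the common factor $(14a_{s,t}a_{u,v})^k$ gives $\sum_{x\in X}x^k=6a_k(14a_{s,t}a_{u,v})^k=\sum_{y\in Y}y^k$, which is (\ref{eq:PTE1}) for $(M,N)=(5,6)$. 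All entries of $X,Y$ are integers, being products of integral binary quadratic forms. Moreover for (\ref{eq:fivesix1_0}) one has $x_1^2+x_2^2+x_3^2=\tfrac32$ and $x_1^2x_2^2+x_2^2x_3^2+x_3^2x_1^2=\tfrac9{16}$ identically in $t$ (forced by the $5$-design property), so the cubics $\prod_i(T-x_i^2)$ of two members of (\ref{eq:fivesix1_0}) differ only in their constant term; two distinct cubics in such a pencil share no root, so $X$ and $Y$ are disjoint as soon as $X\neq Y$, which holds outside a proper closed subset of the parameters. As $N=6=M+1$, the solution is then ideal.

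\emph{Reduction of the comparison with Borwein's family.} Both (\ref{eq:PTE2}) and (\ref{eq:Borwein2}) are symmetric about $0$, so an affine equivalence over $\qq$ between them fixes $0$; it thus suffices to find, for each admissible $(s,t,u,v)$, a scalar $\lambda\in\qq^{\times}$ and integers $m,n$ with $\lambda X=X_B(m,n)$ and $\lambda Y=Y_B(m,n)$, where $X_B,Y_B$ denote the two multisets in (\ref{eq:Borwein2}) (allowing $X$ and $Y$ to be interchanged). For an antipodal $6$-set $\{\pm p,\pm q,\pm r\}$ this amounts to an equality of monic cubics $(T-p^2)(T-q^2)(T-r^2)$; so I would match the cubic of $\lambda X$ with that of $X_B(m,n)$, and likewise for $\lambda Y$ and $Y_B(m,n)$.

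\emph{The identities and the resulting system.} The matching is workable because \emph{both} cubics have the shape $T^3-\varepsilon T^2+\tfrac14\varepsilon^2T-(\text{square})$, the algebraic shadow of the relation $3a_2^2=2a_4$ of Theorem~\ref{thm:parametrization0}. For (\ref{eq:PTE3}) this rests on $b_{\alpha,\beta}+c_{\alpha,\beta}+d_{\alpha,\beta}=0$ and $b_{\alpha,\beta}c_{\alpha,\beta}+c_{\alpha,\beta}d_{\alpha,\beta}+d_{\alpha,\beta}b_{\alpha,\beta}=-147\,a_{\alpha,\beta}^2$ (both checked by comparing coefficients; they yield in turn $b_{\alpha,\beta}^4+c_{\alpha,\beta}^4+d_{\alpha,\beta}^4=2\cdot147^2\,a_{\alpha,\beta}^4$, a Hilbert-type identity of the kind behind (\ref{eq:KawadaWooley})), and give for $X$ the cubic $T^3-294(a_{s,t}a_{u,v})^2T^2+147^2(a_{s,t}a_{u,v})^4T-a_{s,t}^6(b_{u,v}c_{u,v}d_{u,v})^2$. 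For (\ref{eq:Borwein2}), writing $x_1=2(m+n)$, $x_2=(mn-3)+(m+n)$, $x_3=(mn-3)-(m+n)$, one computes $\sum x_i^2=6(m+n)^2+2(mn-3)^2$, $(\sum x_i^2)^2=4\sum_{i<j}x_i^2x_j^2$, and $x_1x_2x_3=2(m+n)\big((mn-3)^2-(m+n)^2\big)$, with the analogues for $Y_B$ obtained under $m\mapsto-m$. Equating the two cubics (after rescaling the roots by $\lambda^2$) reduces to the single equation $147\,\lambda^2(a_{s,t}a_{u,v})^2=3(m+n)^2+(mn-3)^2$ (the $T^1$-terms then agree automatically) together with the two constant-term equations $\lambda^3a_{s,t}^3\,b_{u,v}c_{u,v}d_{u,v}=\pm2(m+n)\big((mn-3)^2-(m+n)^2\big)$ and $\lambda^3a_{u,v}^3\,b_{s,t}c_{s,t}d_{s,t}=\pm2(n-m)\big((mn+3)^2-(n-m)^2\big)$.

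\emph{The main obstacle.} What remains is to solve this system for $m,n,\lambda$ explicitly in terms of $s,t,u,v$, and this is the hard part. The first equation is a conic in $(m+n,mn)$; a single rational point parametrises all of its rational points by a pencil of lines, and substituting this into the two constant-term equations should pin down the line parameter and $\lambda$. The delicate bookkeeping is to ensure that $m,n$ can be taken in $\zz$ after clearing denominators (absorbing the leftover scale into $\lambda$) and to control signs; as a check, the member of (\ref{eq:fivesix1_0}) at $t=0$, namely $\{\pm\tfrac{13}{14},\pm\tfrac{11}{14},\pm\tfrac17\}$, is carried onto $X_B(159,13)=\{\pm344,\pm2236,\pm1892\}$ by the scaling $\lambda=2408$. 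Once the resulting dictionary $(s,t,u,v)\mapsto(m,n)$ is in hand, substituting it into (\ref{eq:Borwein2}) and comparing with (\ref{eq:PTE2}) establishes the asserted inclusion up to affine equivalence over $\qq$; producing that dictionary, rather than merely verifying it, is where I expect the real difficulty to lie.
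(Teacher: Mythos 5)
The first half of your argument is correct and is essentially the paper's route: (\ref{eq:PTE2}) is the common-denominator form of two members of the family (\ref{eq:fivesix1_0}) (at parameters $u/v$ and $s/t$, both scaled by the same factor $14a_{s,t}a_{u,v}$), so equality of the power sums up to degree $5$ follows from Theorem~\ref{thm:fivesix} exactly as in Proposition~\ref{prop:PTE1}, and $N=6=M+1$ gives ideality; your disjointness remark via the cubics $\prod_i(T-x_i^2)$ is a reasonable supplement to what the paper leaves implicit.

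The second assertion is where your proposal has a genuine gap, and you say so yourself: you reduce the comparison with (\ref{eq:Borwein2}) to the system consisting of the conic $147\,\lambda^2(a_{s,t}a_{u,v})^2=3(m+n)^2+(mn-3)^2$ together with two constant-term (cubic) equations, but you never solve it, never show it is consistent for all parameters, and the explicit dictionary $(s,t,u,v)\mapsto(m,n,\lambda)$ is precisely the content of the claim. Your route through the elementary symmetric functions of the squares also makes the task needlessly rigid: a single $(m,n,\lambda)$ must satisfy a quadric and two cubics simultaneously, and nothing in your sketch guarantees that the line-pencil parameter chosen on the conic will satisfy both constant-term conditions at once. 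The paper avoids this by matching coordinates rather than symmetric functions: after flipping the signs of $x_2,y_2$ in (\ref{eq:Borwein2}) (harmless inside an antipodal multiset) both solutions become linear, antipodality forces $B=0$, the equations $Ax_1=x'_1$ and $Ay_1=y'_1$ determine $m,n$ linearly in $1/A$, and $Ax_2=x'_2$ then becomes a quadratic in $A$ whose discriminant is a rational square precisely because $4((x'_1)^2+x'_1x'_2+(x'_2)^2)=3$ (Lemma~\ref{lem:5-6-1}); this yields $(A,m,n)$ explicitly as rational functions of the parameters, and a final check that $Ay_2\in\{y'_2,y'_3\}$ (hence $Ay_3$ matches the remaining coordinate) completes the equivalence. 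Two further cautions: the paper takes $m,n\in\qq$ in this comparison, so your insistence on landing in $\zz$ after rescaling is an extra requirement that is neither needed for the statement nor obviously attainable; and a single numerical instance such as your $t=0$ check does not substitute for the parametric identity, especially since the equivalence must carry the $X$- and $Y$-sides onto $X_B(m,n)$ and $Y_B(m,n)$ with the same pair $(m,n)$.
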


This paper is organized as follows.
Section~\ref{sect:preliminaries} gives preliminaries where we review basic facts and background on Hilbert-Kamke equations and geometric designs.
Sections~\ref{sect:small_deg} through~\ref{sect:PTE} are the main body of this paper, where Theorems~\ref{thm:deg3} through~\ref{thm:PTEparametric} are proved, respectively. Section~\ref{sect:conclusion} is the conclusion, where further remarks will be made.

\section{Preliminaries}\label{sect:preliminaries}

In this section we review some basic terminologies and facts that often appear throughout this paper. The description below is fairly conscious of the relationship between Hilbert-Kamke equations, quadrature formulas and Waring problem.
Subsections~\ref{subsect:Hausdorff} and~\ref{subsect:geometric} include some observations that are related to the body of this paper.
In Subsection~\ref{subsect:rational}, we review some terminology on algebraic and arithmetic geometry used in Subsection~\ref{subsect:geometric} and Section~\ref{sect:parametrization}.

\subsection{Quadrature and Hilbert-Kamke equations}\label{subsect:QF}

Let $I \subseteq \rr$ be an interval, with a probability density function $w(t)$ such that all moments $a_k = \int_I t^k w(t)dt$ are finite.

\begin{definition}\label{def:QF1}
An integration formula of type
\begin{equation}\label{eq:QF1}
	\frac{1}{n} \sum_{i=1}^n f(x_i) 	= \int_I f(t) w(t) dt
\end{equation}
where $x_1,\ldots,x_n \in I$, is a \emph{Chebyshev-type quadrature of degree $m$} if (\ref{eq:QF1}) holds for every polynomial $f \in \mathcal{P}_m$.
Here $\mathcal{P}_m$ denotes the space of all univariate polynomials of degree at most $m$. The configuration of points $x_i$ is an \emph{(equi-weighted) $m$-design} if all $x_i$ are distinct, and a \emph{weighted $m$-design} otherwise.
As a convention, we write $(x_1,\ldots,x_n)$ or $\{x_1,\ldots,x_n\}$ for point configurations.
\end{definition}

Most designs and quadrature, often discussed in the present paper, have the following two properties.

\begin{definition}
\hangindent\leftmargini
\label{def:antipodal}
\textup{(i)} A (weighted) $m$-design for $\int_I w(t) dt$ is \emph{antipodal} if the point  configuration $\{x_i\}$ coincides with $\{-x_i\}$.
\begin{enumerate}
\setcounter{enumi}{1}
\item[(ii)] A (weighted) $m$-design for $\int_I w(t) dt$ is \emph{rational} if all $x_i$ are rational numbers.
\end{enumerate}
\end{definition}

Let us look at some examples.

\begin{example}[Newton-Cotes formula]\label{ex:Newton}
The Newton-Cotes quadrature is given as
\begin{equation}\label{eq:NewtonCotes}
	\frac{1}{6} f(-1) + \frac{2}{3} f(0) + \frac{1}{6} f(1)
	= \frac{1}{2} \int_{-1}^1
	f(t) dt \quad \text{for every $f \in \mathcal{P}_3$}.
\end{equation}
The point configuration $\{0,\pm1\}$ is antipodal and rational, but not equi-weighted.
\end{example}

\begin{example}[Chebyshev-Gauss quadrature]\label{ex:Gauss}
Let $x_1,\ldots,x_n$ be the zeros of the Chebyshev polynomial $T_n(t)$ of the first kind.
The Chebyshev-Gauss quadrature is given as
\begin{equation}\label{eq:GaussChebyshev}
	\frac{1}{n} \sum_{i=1}^n f\Big( \cos\Big( \frac{2i-1}{2n} \pi \Big) \Big)
	= \int_{-1}^1 \frac{f(t) dt}{\pi\sqrt{1-t^2}} \quad \text{for every $f \in \mathcal{P}_{2n-1}$}.
\end{equation}
This point configuration is antipodal and equi-weighted, but irrational for $n \ge 2$. 
Among all quadrature of degree $m$ with $n$ points,
the formula (\ref{eq:GaussChebyshev}) is minimal  with respect to the classical lower bound $n \ge \lfloor m/2 \rfloor + 1$; for example, see~\cite{DX2014}.
\end{example}

\begin{example}\label{ex:Chebyshev1}
\begin{equation}\label{eq:Chebyshev1}
\frac{1}{6} \sum_{\pm} \Big\{ f\Big(\pm \frac{1}{7}\Big) + f\Big(\pm\frac{11}{14}\Big) + f\Big(\pm \frac{13}{14} \Big) \Big\} =  \int_{-1}^1 \frac{f(t) dt}{\pi\sqrt{1-t^2}} \quad \text{for every $f \in \mathcal{P}_5$}.
\end{equation}
This point configuration, which appears many times in the present paper,  is antipodal, equi-weighted, and rational. 
The quadrature (\ref{eq:Chebyshev1}) is also minimal with respect to the inequality $N \ge M+1$ for the PTE problem; for the detail, see Section~\ref{sect:PTE}.
\end{example}
 
The following provides a method for composing small designs to produce a larger one of the same degree.

\begin{proposition}\label{prop:composition}
If $X_1$ and $X_2$ are disjoint $t$-designs for $\int_I w(t)dt$, then so is $X_1 \cup X_2$. 
\end{proposition}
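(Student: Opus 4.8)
The plan is to reduce the claim to the linearity of the two functionals appearing in \eqref{eq:QF1}, namely the averaging map $f \mapsto \tfrac1n\sum_i f(x_i)$ and the integration map $f \mapsto \int_I f(t)w(t)\,dt$, combined with a trivial count of points.

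First I would record the hypothesis in the form of the defining identities. Writing $X_1=\{x_1,\dots,x_{n_1}\}$ and $X_2=\{y_1,\dots,y_{n_2}\}$ with $X_1\cap X_2=\emptyset$, Definition~\ref{def:QF1} says that for every $f\in\mathcal{P}_t$ one has $\frac{1}{n_1}\sum_{i=1}^{n_1}f(x_i)=\int_I f(t)w(t)\,dt$ and $\frac{1}{n_2}\sum_{j=1}^{n_2}f(y_j)=\int_I f(t)w(t)\,dt$. Clearing denominators gives $\sum_i f(x_i)=n_1\int_I f(t)w(t)\,dt$ and $\sum_j f(y_j)=n_2\int_I f(t)w(t)\,dt$.

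Next I would add these two relations and divide by $n_1+n_2$. Since $X_1\cap X_2=\emptyset$, the union $X_1\cup X_2$ is a configuration of exactly $n_1+n_2$ \emph{distinct} points, so the left-hand side of the resulting identity is precisely the averaging map of \eqref{eq:QF1} evaluated on $X_1\cup X_2$:
\[
\frac{1}{n_1+n_2}\Bigl(\sum_{i=1}^{n_1}f(x_i)+\sum_{j=1}^{n_2}f(y_j)\Bigr)
=\frac{n_1+n_2}{n_1+n_2}\int_I f(t)w(t)\,dt=\int_I f(t)w(t)\,dt .
\]
As this holds for all $f\in\mathcal{P}_t$ — equivalently, on the monomial basis of $\mathcal{P}_t$, via the moment equations $\sum_i x_i^k=n_1a_k$ and $\sum_j y_j^k=n_2a_k$ for $k=0,1,\dots,t$ — the configuration $X_1\cup X_2$ is a $t$-design, as claimed.

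I do not expect any genuine obstacle here: the content is entirely the additivity of finite sums and of the integral. The only point deserving emphasis is the role of the word ``disjoint'': it is what guarantees that $X_1\cup X_2$ has no repeated points and is therefore an equi-weighted $t$-design in the sense of Definition~\ref{def:QF1}; if one drops it, the identical computation still yields a \emph{weighted} $t$-design supported on $X_1\cup X_2$.
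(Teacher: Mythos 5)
Your proposal is correct and follows essentially the same route as the paper: the paper likewise writes the average over $X_1\cup X_2$ as the convex combination $\tfrac{N_1}{N_1+N_2}$ and $\tfrac{N_2}{N_1+N_2}$ of the two individual averages and applies the defining identity \eqref{eq:QF1} to each. Your added remark about disjointness guaranteeing distinct points (and the weighted-design variant otherwise) is consistent with the paper's conventions and introduces nothing new.
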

\begin{proof}[Proof of Proposition~\ref{prop:composition}]
For $i=1,2$, let $N_i$ be the size of $X_i$. Then it follows, from the defining property (\ref{eq:QF1}) of quadrature, that for every polynomial $f$ of degree at most $t$,
\[
\begin{gathered}[b]
\frac{1}{N_1+N_2} \sum_{x \in X_1\cup X_2} f(x)
= \frac{N_1}{N_1+N_2} \frac{1}{N_1} \sum_{x \in X_1} f(x) + \frac{N_2}{N_1+N_2} \frac{1}{N_2} \sum_{x \in X_2} f(x) \\
= \frac{N_1}{N_1+N_2} \int_I f(t) w(t)dt + \frac{N_2}{N_1+N_2} \int_I f(t) w(t)dt
= \int_I f(t) w(t)dt.
\end{gathered}
\qedhere
\]
\end{proof}

A rational weighted design is equivalent to a certain system of Diophantine equations that originally stems from the Waring problem.

\begin{proposition}[\cite{SU2020}]\label{prop:HK_1}
Let $w(t)dt$ be a probability measure such that all moments $a_k = \int_I t^k w(t)dt$ are finite and rational.
Let $x_1,\ldots,x_n \in \qq \cap I$. Then the following are equivalent:
\begin{enumerate}
\item[(i)] The point configuration $(x_1,\ldots,x_n)$ is a rational (weighted) $m$-design for $\int_I w(t)dt$;
\item[(ii)] $(x_1,\ldots,x_n)$ forms a rational solution of the Diophantine equations
\begin{equation}\label{eq:HK}
  x_1^k + x_2^k + \dotsb + x_n^k = n a_k, \quad k=0,1,\ldots,m. 
\end{equation}
\end{enumerate}
\end{proposition}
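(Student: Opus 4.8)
The plan is to exploit the fact that the defining identity of a Chebyshev-type quadrature is \emph{linear} in the test function $f$, combined with the observation that the power functions $t^k$ span $\mathcal{P}_m$. Both statements (i) and (ii) are then seen to be two presentations of the same finite set of scalar equations.

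First I would observe that each side of the quadrature identity (\ref{eq:QF1}) is a $\qq$-linear functional of $f$ on the vector space $\mathcal{P}_m$: the left-hand map $f \mapsto \tfrac{1}{n}\sum_{i=1}^n f(x_i)$ and the right-hand map $f \mapsto \int_I f(t) w(t)\,dt$ are both additive and homogeneous in $f$. Since the monomials $\{1, t, t^2, \ldots, t^m\}$ form a basis of $\mathcal{P}_m$, the identity (\ref{eq:QF1}) holds for \emph{every} $f \in \mathcal{P}_m$ if and only if it holds for each basis monomial $f(t) = t^k$ with $k = 0, 1, \ldots, m$. This is precisely the content of (i), the configuration $(x_1,\ldots,x_n)$ being an $m$-design.

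Next I would evaluate (\ref{eq:QF1}) at $f(t) = t^k$. Using the definition $a_k = \int_I t^k w(t)\,dt$, the identity becomes $\tfrac{1}{n}\sum_{i=1}^n x_i^k = a_k$, which upon clearing the denominator is exactly $\sum_{i=1}^n x_i^k = n a_k$, that is, equation (\ref{eq:HK}). Running this equivalence over all $k = 0, 1, \ldots, m$ shows that (i) holds if and only if the system (\ref{eq:HK}) holds. Finally, the rationality clause requires no separate argument: since by hypothesis $x_1,\ldots,x_n \in \qq \cap I$ and every moment $a_k$ is rational, the design in (i) is automatically rational precisely when the solution in (ii) is, so the two rational descriptions coincide.

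There is essentially no obstacle here; the proposition is a reformulation by linearity and the only points meriting a brief remark are bookkeeping ones. The case $k = 0$ corresponds to constant test functions and yields $\sum_{i=1}^n x_i^0 = n = n a_0$, which is consistent with $w(t)\,dt$ being a probability measure (so $a_0 = 1$) and may be suppressed; and one uses the standing assumption $x_i \in I$ to guarantee that the left-hand side of (\ref{eq:QF1}) is well defined, which is built into the hypothesis $x_i \in \qq \cap I$.
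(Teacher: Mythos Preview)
Your argument is correct: the equivalence is exactly the observation that both sides of (\ref{eq:QF1}) are linear in $f$ and that the monomials $1,t,\ldots,t^m$ form a basis of $\mathcal{P}_m$, so testing on monomials yields precisely the equations (\ref{eq:HK}). The paper does not supply its own proof of this proposition, merely citing \cite{SU2020}; your write-up is the standard justification and matches what that reference would contain.
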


The equations (\ref{eq:HK}) were originally designed by Hausdorff~\cite{Hau09} as a simplification of Hilbert's solution of the Waring problem~\cite{Hi09}.
Sawa and Uchida~\cite[Theorem 5.5]{SU2020} generalized weighted designs for Hermite measure to those for all probability measures that correspond to the classical orthogonal polynomials, namely Hermite, Laguerre and Jacobi polynomials.
As a follow-up to their work, Matsumura~\cite{M23} discussed weighted designs for Bessel measure $-e^{-2/z}dz/(4\pi\sqrt{-1})$ over $\mathbb{C}$.

The following question was posed by Bannai, Bannai, Ito and Tanaka~\cite[p.208, Problem 2]{BBI2016}, (see also~\cite{X2022}).

\begin{problem}\label{prob:HK}
Given positive integers $m,n \ge 2$, develop an explicit construction of points $x_1,\ldots,x_n \in \qq \cap (-1,1)$ such that
\begin{equation}\label{eq:HK_equiweighted}
\frac{1}{n} \sum_{i=1}^n f(x_i) = \int_{-1}^1 f(t) (1-t^2)^{(d-3)/2} dt \quad
\text{for every $f \in \mathcal{P}_m$},
\end{equation}
where $d$ is a positive integer with $d \ge 2$.
\end{problem}

In algebraic combinatorics, especially in design theory on the unit sphere $\mathbb{S}^d$, it is customary to use the term \emph{interval design} instead of design.
Interval designs for Gegenbauer measure $(1-t^2)^{(d-3)/2}dt / \int_{-1}^1 (1-t^2)^{(d-3)/2}dt$ are highly motivated by the construction of spherical designs; for example, see Bajnok~\cite{B1992}, Rabau and Bajnok~\cite{RB1991}, Wagner~\cite{W1991}.
As commented by Xiang~\cite{X2022}, it is still an open question to present an explicit construction of rational designs for Gegenbauer measure in general.

In this paper we restrict our attention to density functions $w(t)$ that correspond to \emph{symmetric classical orthogonal polynomials}, namely $w(t) = e^{-t^2}/\sqrt{\pi}$ on $(-\infty,\infty)$ or
$w(t) = (1-t^2)^{\lambda-1/2} / \int_{-1}^1 (1-t^2)^{\lambda-1/2}dt$ on $(-1,1)$ where $\lambda > -1/2$,
and then extensively discuss Problem~\ref{prob:HK}, especially for $m \le 5$.
A motivation for the degree five and four cases comes from the study of
Waring's problem; for example, see Remark~\ref{rem:KawadaWooley2} and Proposition~\ref{prop:KawadaWooley}.

\begin{remark}\label{rem:design}
As seen in Proposition~\ref{prop:HK_1}, an $m$-design with $n$ points is equivalent to Hilbert-Kamke equations (\ref{eq:HK}).
In the framework of the Hilbert-Kamke problem and Waring's problem, it is often allowed that $x_i = x_j$ for distinct $i$ and $j$, whereas this is not often the case in geometric design theory.
On the other hand, it is often the case to only deal with positive rationals $x_i$ for
the Hilbert-Kamke equation, whereas negative rationals are also allowed in design theory; for example see Nathanson~\cite[\S~4.3]{N1996}.
\end{remark}

\subsection{Hilbert-Kamke equations and Hausdorff's construction}\label{subsect:Hausdorff}

For positive integers $n$ and $r$, let $s(n,r)$ be the minimum number $s$ such that $n = n_1^r+\cdots+n_s^r$ for some $s$-tuple $(n_1,\ldots,n_s) \in \nn^s$, and let
\[
g(r) = \max_{n \in \nn} s(n,r).
\]
The classical Waring problem asks whether $g(r)$ is finite and, if $g(r) < \infty$,
what $g(r)$ is. The Lagrange four-square theorem states that $g(2) = 4$.
  
\begin{remark}\label{rem:Gauss}
The Gauss three-square theorem is a refinement of the four-square theorem, stating that a positive integer $n$ is a sum of three squares of integers if and only if $n$ is not of the form $4^e(8k+7)$, where $e$ and $k$ are non-negative integers.
\end{remark}

After Lagrange, in 1859, Liouville made the first progress for $r \ge 3$, who proved that every positive integer can be represented as a sum of at most $53$ fourth powers of integers (biquadrates).
A key for the proof is the polynomial identity
\begin{equation}\label{eq:Liouville}
6(X_1^2 + X_2^2 + X_3^2 + X_4^2)^2 = \sum_{1\le i<j \le 4} \{(X_i+X_j)^4 + (X_i-X_j)^4\}.
\end{equation}

\begin{definition}[cf.~\cite{NS14,R1992}]\label{def:Hilbertidentity}
A polynomial identity of type
\begin{equation}\label{eq:rationalHilbert}
(X_1^2 + \cdots + X_n^2)^r = \sum_{i = 1}^M c_i (a_{i1} X_1 + \cdots + a_{in} X_n)^{2r},
\end{equation}
where $c_i \in \rr_{>0}$ and $a_{ij} \in \rr$, is called a \emph{Hilbert identity}.
In particular, when $c_i \in \qq_{>0}$ and $a_{ij} \in \qq$, this is called a \emph{rational (Hilbert) identity}.
\end{definition}

After Liouville, mathematicians in the late 19th century found similar identities to show the finiteness of $g(r)$ for small $r$.
It was Hilbert who finally and completely established that $g(r) < \infty$ for all $r$.
A key step of Hilbert's proof is Theorem~\ref{thm:Hilbert} below, which was originally stated in $5$ indeterminates $X_1,\ldots,X_5$.

\begin{theorem}[Hilbert's lemma]\label{thm:Hilbert}
For any $n,r \in \nn$, there exist $a_{ij} \in \qq$ and $c_i \in \qq_{>0}$ such that
\begin{equation}\label{eq:Hilbertidentity}
  (X_1^2 + \cdots + X_n^2)^r
     = \sum_{i = 1}^{\binom{2r+n-1}{2r}} c_i (a_{i1} X_1 + \cdots + a_{in} X_n)^{2r}.
\end{equation}
\end{theorem}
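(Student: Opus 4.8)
The plan is to carry out Hausdorff's reduction of the $n$-variable identity to a one-variable statement about the Hermite measure, and then trim the resulting representation down to exactly $N:=\binom{2r+n-1}{2r}$ terms. I would begin with the Gaussian averaging identity: for a fixed $X=(X_1,\dots,X_n)\in\rr^n$, the integral $\int_{\rr^n}(X_1u_1+\dots+X_nu_n)^{2r}\,d\mu(u)$ against the standard Gaussian probability measure $d\mu(u)=\pi^{-n/2}e^{-|u|^2}\,du$ is, by the rotation invariance of $\mu$, an $O(n)$-invariant homogeneous polynomial of degree $2r$ in $X$, hence a constant multiple $\gamma_{n,r}(X_1^2+\dots+X_n^2)^r$; comparing the coefficient of $X_1^{2r}$ gives $\gamma_{n,r}=\int_\rr t^{2r}\pi^{-1/2}e^{-t^2}\,dt=a_{2r}$, the $2r$-th moment of the one-dimensional Hermite measure, which is a positive rational number.

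Next I would tensorize. Since $d\mu(u)=\prod_{k=1}^n\pi^{-1/2}e^{-u_k^2}\,du_k$, a rational weighted $2r$-design for the one-dimensional Hermite measure --- nodes $\xi_1,\dots,\xi_s\in\qq$ and weights $w_1,\dots,w_s\in\qq_{>0}$ with $\sum_jw_j=1$, exact on $\mathcal{P}_{2r}$, which exists by Hausdorff's theorem~\cite{Hau09} (equivalently, a positive rational solution of the associated Hilbert-Kamke equations, via Proposition~\ref{prop:HK_1}) --- yields, by taking products, a cubature with nodes $\xi_{\mathbf j}=(\xi_{j_1},\dots,\xi_{j_n})$ and weights $\prod_{k=1}^nw_{j_k}$ that is exact for every polynomial in $u$ of total degree at most $2r$, in particular for $(X_1u_1+\dots+X_nu_n)^{2r}$. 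Evaluating the averaging identity against this cubature and dividing by $\gamma_{n,r}=a_{2r}$ gives a rational Hilbert identity $(X_1^2+\dots+X_n^2)^r=\sum_{\mathbf j}c_{\mathbf j}(\xi_{j_1}X_1+\dots+\xi_{j_n}X_n)^{2r}$ with $c_{\mathbf j}=a_{2r}^{-1}\prod_kw_{j_k}\in\qq_{>0}$ --- but a priori with $s^n$ terms.

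To reach exactly $N=\binom{2r+n-1}{2r}$ terms, I would use that $N$ is the dimension of the space $V$ of homogeneous polynomials of degree $2r$ in $n$ variables, and that the identity just obtained exhibits the coefficient vector of $(X_1^2+\dots+X_n^2)^r$ --- which is rational --- as a positive rational combination, in $V$, of the coefficient vectors of the forms $(\xi_{j_1}X_1+\dots+\xi_{j_n}X_n)^{2r}$. Whenever more than $N$ terms appear, these vectors are $\qq$-linearly dependent; subtracting an appropriate rational multiple of a dependence relation annihilates at least one coefficient while keeping every coefficient nonnegative and rational (a rational Carath\'eodory step). Iterating leaves at most $N$ terms, all with positive rational coefficients, and one may pad back to exactly $N$ by splitting one coefficient into two. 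Reading off the $a_{ij}$ as the components of the surviving $\xi_{\mathbf j}$ and the $c_i$ as the surviving coefficients completes the proof.

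The only genuinely substantive ingredient is the base case --- the existence, for every $r$, of the one-dimensional rational weighted $2r$-design for the Hermite measure --- and the step demanding care is the rational Carath\'eodory trimming; I would expect the base case to be the main obstacle if it were not available as a classical fact. It too can be produced by a Carath\'eodory argument: the rational moment vector $(a_0,a_1,\dots,a_{2r})$ of the Hermite measure lies in the interior of the convex cone generated by $\{(1,t,\dots,t^{2r}):t\in\rr\}$ because the measure has full support, this cone is unchanged when $t$ is restricted to $\qq$ (the rationals being dense), and a rational interior point of a cone generated by rational vectors is a positive rational combination of at most $2r+1$ of them. Running the same idea directly on $\{(\xi_1X_1+\dots+\xi_nX_n)^{2r}:\xi\in\qq^n,\ |\xi|=1\}$, which spans $V$, would bypass Hausdorff's reduction entirely; there the one delicate point is to verify that the barycenter of the uniform measure on $S^{n-1}$, viewed as an element of $V$, lies in the interior of the conic hull --- which follows from that measure having full support on a spanning set.
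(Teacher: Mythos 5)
Your argument is correct, and it is essentially the route the paper itself gestures at but never writes out: Theorem~\ref{thm:Hilbert} is quoted as Hilbert's classical result (with references to Hilbert's paper and Pollack), and the surrounding discussion only sketches Hausdorff's simplification --- precisely your first two steps, the Gaussian averaging identity $\int_{\rr^n}(u_1X_1+\cdots+u_nX_n)^{2r}e^{-|u|^2}du=c_{n,r}(X_1^2+\cdots+X_n^2)^r$ followed by the iterated (tensor-product) quadrature built from the rational weighted $2r$-design of Hausdorff's lemma (Theorem~\ref{thm:haus}). What you add, and what the paper's sketch does not address, is the reduction from the $s^n$ terms of the product rule to exactly $\binom{2r+n-1}{2r}$ terms; your rational Carath\'eodory trimming handles this correctly, since the coefficient vectors of the forms $(\xi_{j_1}X_1+\cdots+\xi_{j_n}X_n)^{2r}$ are rational vectors in a space of dimension $\binom{2r+n-1}{2r}$, a rational dependence relation can be subtracted with a rational step size that keeps all coefficients nonnegative and kills one of them, and the statement does not require the linear forms to be distinct, so padding by splitting a coefficient is legitimate. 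Your two flagged delicate points are genuinely the only ones: the base case is available as Theorem~\ref{thm:haus} (so no circularity), and your alternative self-contained base case via the truncated moment cone does require the standard fact that full support (positive-definite Hankel matrices) puts the Gaussian moment vector in the interior of the cone over the moment curve, after which a perturbation to rational nodes and a rational simplicial-cone representation go through as you indicate. In short, the proposal is a complete and correct proof along the Hausdorff line that the paper only describes informally, strengthened by the trimming step needed to reach the exact term count in \eqref{eq:Hilbertidentity}.
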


Hilbert's proof is known to be fairly long and complicated. For a good expository work concerning Theorem~\ref{thm:Hilbert}, we refer the reader to Pollack~\cite{P09}.

After the pioneering work by Hilbert, the attention of those who had studied Waring's problem turned to the determination of $g(r)$. $g(3)=9$ was established by Wieferich and Kempner from 1909 to 1912.
Their proof involves the formal derivative of Liouville's identity (\ref{eq:Liouville}).
For an early history on Waring's problem, we refer the reader to Dickson's book~\cite[pp.717--725]{D1920}.

Since the 1920s, a powerful technique called the \emph{circle method} was developed by Hardy and Littlewood, which has drastically changed the situation of
Waring's problem.
Surprisingly, over 20 years after the development of the circle method, $g(r)$ was completely determined for all $r \ge 7$ by Dickson, Niven and so on.
Afterwards, $g(6) = 73$ was established by Pillai in 1940, and $g(5) = 37$ by Chen in 1964.

It was Balasubramanian, Deshouillers and Dress who established $g(4) = 19$ in a series of papers published from 1988 to 1993.
Their proof mainly consists of two parts.
The first part makes use of the circle method, in order to show that every positive integer $n > 10^{367}$ is a sum of at most $19$ biquadrates.
The second part is to check on a  large-scale computer system whether each $n \le 10^{367}$ can be actually represented as a sum of at most $19$ biquadrates. 

Now, $10^{367}$ is an astronomical number. It was Kawada and Wooley~\cite{KW1999} who combined the circle method with the polynomial identity
\begin{equation}\label{eq:KawadaWooley}
2(X^2 + XY + Y^2)^2 = X^4 + Y^4 + (X+Y)^4
\end{equation}
and thereby improved $10^{367}$ to $10^{146}$. 

\begin{remark}\label{rem:KawadaWooley2}
As implied by Kawada and Wooley~\cite{KW1999}, the collaboration of the circle method and polynomial identity approach is key in the study of Waring's problem for the degree four case, which exactly applies to Hilbert-Kamke problem,
as will be seen in both Sections~\ref{sect:five-six} and~\ref{sect:conclusion}.
While we mainly consider geometric designs and Hilbert-Kamke equations of degree five in this paper, we point out that, for a positive integer $d$, a set $X$ is a spherical $4$-design on $\mathbb{S}^d$ if and only if $X \cup (-X)$ is a spherical $5$-design on $\mathbb{S}^d$.
\end{remark}

Then what is the polynomial identity (\ref{eq:KawadaWooley})?

\begin{proposition}\label{prop:KawadaWooley}
The polynomial identity (\ref{eq:KawadaWooley}) is transformed to the following Hilbert identity by a regular transformation:
\begin{equation}\label{eq:KawadaWooley1}
(X_1^2 + X_2^2)^2 =
\frac{2}{3} \sum_{k=0}^2 \Big( \cos\Big(\frac{4k + 3}{6}\pi\Big) X_1 + \sin\Big(\frac{4k + 3}{6}\pi\Big) X_2 \Big)^4.
\end{equation}
\end{proposition}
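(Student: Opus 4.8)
The plan is to write the regular transformation down explicitly and then simply read (\ref{eq:KawadaWooley}) in the new coordinates. Since $X^2+XY+Y^2$ is a positive-definite binary quadratic form, it becomes a scalar multiple of $X_1^2+X_2^2$ after completing the square; the substitution I would use is
\[
X = -\tfrac{\sqrt3}{2}X_1 - \tfrac12 X_2, \qquad Y = \tfrac{\sqrt3}{2}X_1 - \tfrac12 X_2,
\]
whose determinant $\tfrac{\sqrt3}{2}\neq 0$, so that it is indeed regular (i.e.\ an invertible $\qq(\sqrt3)$-linear, or after a harmless rescaling an orthogonal, change of variables). A one-line expansion then gives $X^2+XY+Y^2 = \tfrac34\,(X_1^2+X_2^2)$.

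The second step is to identify, under the same substitution, the three linear forms appearing on the right-hand side of (\ref{eq:KawadaWooley}). One checks directly that
\[
X = \cos\tfrac{7\pi}{6}\,X_1 + \sin\tfrac{7\pi}{6}\,X_2,\quad
Y = \cos\tfrac{11\pi}{6}\,X_1 + \sin\tfrac{11\pi}{6}\,X_2,\quad
X+Y = -\left(\cos\tfrac{\pi}{2}\,X_1 + \sin\tfrac{\pi}{2}\,X_2\right);
\]
since only fourth powers occur the sign of the last form is irrelevant, so the three forms are exactly $\cos\theta_k X_1 + \sin\theta_k X_2$ with $\theta_k = \tfrac{4k+3}{6}\pi$, $k=0,1,2$. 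Geometrically this is the assertion that the direction vectors of $X$, $Y$ and $X+Y$ make angles of $120^\circ$ with one another and, because $X^2+XY+Y^2$ has been turned into a round form, all have the same length, so after normalization they form a regular triple of directions on the unit circle.

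Feeding these two facts into (\ref{eq:KawadaWooley}) turns it into $2\cdot(\tfrac34)^2(X_1^2+X_2^2)^2 = \sum_{k=0}^{2}\left(\cos\theta_k X_1 + \sin\theta_k X_2\right)^4$, i.e.\ $\tfrac98(X_1^2+X_2^2)^2 = \sum_{k=0}^{2}\left(\cos\theta_k X_1 + \sin\theta_k X_2\right)^4$; dividing through by the numerical factor yields (\ref{eq:KawadaWooley1}). As an independent check of the coefficient one can evaluate the sum directly: expanding $(\cos\theta X_1+\sin\theta X_2)^4$ in angular frequencies produces only the frequencies $0$, $2$, $4$, none of which is a multiple of $3$, so averaging over the three equally spaced angles $\theta_k$ reproduces exactly three times the full circular average $\tfrac38(X_1^2+X_2^2)^2$, again giving $\tfrac98(X_1^2+X_2^2)^2$. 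I do not expect a genuine obstacle here: the whole argument is a coordinate change plus two elementary polynomial identities, and the only point requiring care is bookkeeping the diagonalization constant $\tfrac34$ of $X^2+XY+Y^2$ together with the common length of the transformed direction vectors so that the numerical coefficient in (\ref{eq:KawadaWooley1}) comes out correctly.
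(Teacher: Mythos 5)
Your approach is the same as the paper's: the paper's proof consists precisely of a regular linear substitution, namely $X=(-\sqrt{3}X_1+X_2)/\sqrt{6}$, $Y=(\sqrt{3}X_1+X_2)/\sqrt{6}$, and your substitution $X=-\tfrac{\sqrt{3}}{2}X_1-\tfrac12 X_2$, $Y=\tfrac{\sqrt{3}}{2}X_1-\tfrac12 X_2$ is the same diagonalization up to rescaling and sign; your normalization is in fact slightly cleaner, since it makes $X$, $Y$, $X+Y$ exactly the unit-vector forms at the angles $\theta_k=\tfrac{4k+3}{6}\pi$ up to sign. Both of your key computations, $X^2+XY+Y^2=\tfrac34(X_1^2+X_2^2)$ and the identification of the three linear forms, are correct, and so is your reduction of \eqref{eq:KawadaWooley} to
\[
\tfrac98\,(X_1^2+X_2^2)^2=\sum_{k=0}^{2}\bigl(\cos\theta_k X_1+\sin\theta_k X_2\bigr)^4 .
\]

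The one genuine problem is the last sentence of your third paragraph: dividing the displayed relation by $\tfrac98$ gives $(X_1^2+X_2^2)^2=\tfrac89\sum_{k=0}^{2}(\cos\theta_k X_1+\sin\theta_k X_2)^4$, which is \emph{not} the identity \eqref{eq:KawadaWooley1} as printed, whose constant is $\tfrac23$. The two are genuinely different: evaluating at $(X_1,X_2)=(1,0)$ gives $\sum_k\cos^4\theta_k=\tfrac98$, so the constant is forced to be $\tfrac89$, whereas with $\tfrac23$ the display would read $1=\tfrac34$. Your own independent check (the averaging argument yielding $3\cdot\tfrac38(X_1^2+X_2^2)^2$) says exactly the same thing, so your computation is right and your claim of agreement with the printed formula is what fails. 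The same constant $\tfrac89$ also comes out of the paper's own substitution, so the discrepancy lies in the constant displayed in the proposition rather than in your argument; you should state explicitly that the derivation yields \eqref{eq:KawadaWooley1} with $\tfrac23$ replaced by $\tfrac89$ (equivalently, note that the printed coefficient needs this correction) instead of asserting that the display follows verbatim.
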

\begin{proof}[Proof of Proposition~\ref{prop:KawadaWooley}]
The result follows by taking the regular transformation 
\[
  X = \frac{-\sqrt{3}X_1+X_2}{\sqrt{6}}, \quad Y = \frac{\sqrt{3}X_1+X_2}{\sqrt{6}}.
\qedhere
\]
\end{proof}

We now go back to Hilbert's lemma (Theorem~\ref{thm:Hilbert}).
After Hilbert, a number of publications have been devoted to the simplification of the original proof of Theorem~\ref{thm:Hilbert}.
A significant contribution was made by Hausdorff~\cite{Hau09}, who first established
\[
\begin{gathered}
{\int \cdots \int}_{\rr^n} (u_1 X_1 + \cdots + u_n X_n)^{2r} \ e^{-(u_1^2+\cdots+u_n^2)} du_1 \cdots du_n \\
= c_{n,r} (X_1^2 + \cdots + X_n^2)^r  \ \text{ for some constant $c_{n,r}$}
\end{gathered}
\]
and then constructed an iterated sum in order to obtain a rational Hilbert identity.
In doing so, Hausdorff showed the following key lemma by making use of the zeros of
the Hermite polynomial of degree $2r+1$.

\begin{theorem}[Hausdorff's lemma]\label{thm:haus}
There exist $x_1, \ldots, x_{2r+1} \in \qq$ and $\lambda_1, \ldots, \lambda_{2r+1} \in \qq_{>0}$ such that
\begin{equation} \label{eq:hausdorff-equation1}
\sum_{i=1}^{2r+1} \lambda_i x_i^j
= \frac{1}{\sqrt{\pi}} \int_{-\infty}^\infty t^j \ e^{-t^2}dt, \qquad j=0,1,\ldots,2r.
\end{equation}
\end{theorem}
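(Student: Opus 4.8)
The plan is to read \eqref{eq:hausdorff-equation1} as the assertion that the Gaussian measure $\pi^{-1/2}e^{-t^2}dt$ admits a weighted \emph{rational} quadrature of degree $2r$ with $2r+1$ nodes, and to produce such a quadrature by perturbing the Gauss--Hermite quadrature onto rational nodes. First I would record that the Hermite moments are rational: by the standard gamma-integral evaluation, $a_j := \pi^{-1/2}\int_{-\infty}^{\infty} t^{j}e^{-t^2}\,dt$ equals $j!\,/\,(2^{j}(j/2)!)$ when $j$ is even and $0$ when $j$ is odd, so $a_0=1$ and $a_0,a_1,\ldots,a_{2r}\in\qq$.

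Next I would invoke the classical theory of Gaussian quadrature (cf.\ Example~\ref{ex:Gauss} and \cite[\S~15]{Sze1975}): if $\xi_1,\ldots,\xi_{2r+1}$ denote the pairwise distinct real zeros of the Hermite polynomial of degree $2r+1$, then there exist \emph{positive} weights $w_1,\ldots,w_{2r+1}$ with $\sum_{i=1}^{2r+1}w_i\xi_i^{j}=a_j$ for all $j$ with $0\le j\le 2(2r+1)-1$, and in particular for $j=0,1,\ldots,2r$. This is precisely the point at which the zeros of the Hermite polynomial of degree $2r+1$ enter the argument.

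Now I would set up the node-to-weight map. For pairwise distinct reals $y_1,\ldots,y_{2r+1}$, the square linear system $\sum_{i=1}^{2r+1}\mu_i y_i^{j}=a_j$ ($j=0,1,\ldots,2r$) in the unknowns $\mu_i$ has coefficient matrix the transpose of the Vandermonde matrix of the $y_i$, with nonzero determinant $\pm\prod_{i<i'}(y_{i'}-y_i)$; hence it has a unique solution $\mu=(\mu_1,\ldots,\mu_{2r+1})$, expressed by Cramer's rule as a ratio of polynomials in $y_1,\ldots,y_{2r+1}$ whose denominator does not vanish on the distinctness locus. Consequently $\mu$ depends continuously on $(y_1,\ldots,y_{2r+1})$ there, and $\mu\in\qq^{2r+1}$ whenever $y_1,\ldots,y_{2r+1}\in\qq$ (since the $a_j$ are rational). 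Evaluating at $(y_1,\ldots,y_{2r+1})=(\xi_1,\ldots,\xi_{2r+1})$ returns $\mu=(w_1,\ldots,w_{2r+1})\in\rr_{>0}^{2r+1}$ by the previous paragraph. By continuity there is $\varepsilon>0$ such that if $|y_i-\xi_i|<\varepsilon$ for every $i$, then the $y_i$ remain pairwise distinct and the resulting weights are still all strictly positive. Finally, by density of $\qq$ in $\rr$, choose $x_i\in\qq$ with $|x_i-\xi_i|<\varepsilon$; then $x_1,\ldots,x_{2r+1}$ are (distinct) rationals, and the associated unique weights $\lambda_1,\ldots,\lambda_{2r+1}$ lie in $\qq_{>0}^{2r+1}$ and satisfy \eqref{eq:hausdorff-equation1}.

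I expect the only step with real content to be the last one --- forcing the nodes to be rational while keeping the weights positive --- and its entire burden is carried by the positivity of the Gauss--Hermite weights together with the soft perturbation argument above; the rationality of the moments, the rationality of the perturbed weights, and the invertibility of the Vandermonde system should all be routine. If an effective version were desired (for instance, to control the height of the $x_i$), one would instead make $\varepsilon$ explicit by bounding the Vandermonde determinant from below and the Cramer numerators from above in a neighbourhood of $(\xi_1,\ldots,\xi_{2r+1})$; for the bare existence statement, however, the argument is complete as sketched.
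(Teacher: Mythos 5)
Your argument is correct, and it is essentially the proof the paper attributes to Hausdorff (the paper itself only cites \cite{Hau09,N06} and notes that the key is the use of the zeros of the Hermite polynomial of degree $2r+1$): Gauss--Hermite quadrature at those zeros gives positive weights exact through degree $2r$, and since the weights solve a nonsingular Vandermonde system with rational moments, they vary continuously and stay positive and become rational when the nodes are perturbed to nearby rationals, which yields \eqref{eq:hausdorff-equation1}.
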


As noted after Proposition~\ref{prop:HK_1}, generalizations of Theorem~\ref{thm:haus} are given by Sawa and Uchida~\cite[Theorem~5.5]{SU2020} and Matsumura~\cite{M23}.

A brief explanation of Hausdorff's method is available in Nesterenko~\cite[p.4700]{N06}, where a further refinement of Hausdorff's arguments is discussed. 

The equations (\ref{eq:hausdorff-equation1}) are just weighted Hilbert-Kamke equations (\ref{eq:HK}) for Hermite measure $e^{-t^2}dt/\sqrt{\pi}$. 
It should be noted that, if one could explicitly construct a rational solution of the equations, then Hausdorff's method generates an explicit construction of rational identities, which can also be applied in the realization of solutions of Waring problem.

\subsection{Rational points and the genus of a curve} \label{subsect:rational}

We first review some terminology on algebraic and arithmetic geometry used in Subsection~\ref{subsect:geometric} (see~\cite{HS} for details).

Let $C$ be a projective, irreducible, and smooth curve.
Then the dimension of the vector space of holomorphic differential forms on $C$ is called the \emph{(geometric) genus} of $C$ and denoted by $g(C)$.
When an irreducible curve is not necessarily smooth or projective, the genus of the curve is defined to be the genus of a smooth projective curve which is birational to the curve.

Let $C$ be a curve defined over $\qq$ with $g(C)\ge 2$.
We can explicitly determine $C(\qq)$ with Chabauty's method if $C$ satisfies a certain condition explained below.
The \emph{Jacobian variety} $J$ of $C$ is an Abelian variety of dimension $g(C)$ which is isomorphic to the divisor class group of degree $0$ of $C$ as groups.
By the Mordell-Weil theorem, the group $J(\qq)$ of rational points of $J$ is finitely generated.
The rank of $J(\qq)$ as a $\zz$-module is called the \emph{Mordell-Weil rank} of $J$.
When the Mordell-Weil rank of $J$ is less than the genus of $C$, the rational points on $C$ can be determined by Chabauty's method.
Chabauty's method for genus $2$ curves is implemented as the function \texttt{Chabauty} in the computer algebra system Magma~\cite{Magma}.
We will use this function \texttt{Chabauty} in the proof of Theorem~\ref{thm:Gegenbauer} in Subsection~\ref{subsect:geometric}.

Next, we review some algebro-geometric terminology used in Section~\ref{sect:parametrization}, most of which can be found in \cite{CLO,Hartshorne,Shafarevich}.
Until the end of this subsection we do not assume that varieties are irreducible; we assume that varieties are defined over $\overline{\qq}$ unless otherwise stated.

For homogeneous polynomials $F_1,\dotsc,F_r\in\overline{\qq}[X_0,\dotsc,X_n]$, we define
\[
    Z(F_1, \dotsc, F_r) = \{\bm{x} \in \mathbb{P}^n
    \mid F_1(\bm{x}) = \dotsb = F_r(\bm{x}) = 0\}.
\]
A subset $X\subset\mathbb{P}^n$ is called
a \emph{projective variety} if there exist homogeneous polynomials
$F_1,\dotsc,F_r\in\overline{\qq}[X_0,\dotsc,X_n]$
such that $X=Z(F_1,\dotsc,F_r)$.
For a projective variety $X\subset\mathbb{P}^n$, the \emph{ideal} (or \emph{annihilating ideal}) of $X$ is defined by
\[
    I(X) = \{F \in \overline{\qq}[X_0,\dotsc,X_n]
    \mid F(\bm{x}) = 0 \text{ for all } \bm{x} \in X\}.
\]

Let $F_1,\dotsc,F_r\in\overline{\qq}[X_0,\dotsc,X_n]$ be homogeneous polynomials.
We define the \emph{Jacobian matrix} $J(\bm{x})$ by
\[
    J(\bm{x}) = \left(\dfrac{\partial F_i}{X_j}(\bm{x})
    \right)_{\substack{1\le i\le r \\ 0\le j\le n}}.
\]
Following \cite{BH}, we call the set $\{F_1,\dotsc,F_r\}$
a \emph{nonsingular system} if $\rank J(\bm{x})=r$
for every nonzero $\bm{x}\in\overline{\qq}^n$ 
satisfying $F_i(\bm{x})=0$, $i=1,\dotsc,r$.

A projective variety $X\subset\mathbb{P}^n$ of dimension $n-r$
is called a \emph{complete intersection} if $I(X)$ can be
generated by $r$ homogeneous polynomials.

\begin{lemma}\label{lemma:BH}
Let $\{F_1,\dotsc,F_r\}$ be a nonsingular system of homogeneous polynomials and let $X=Z(F_1,\dotsc,F_r)\subset\mathbb{P}^n$.
Then $I(X)$ is generated by $F_1,\dotsc,F_r$,
and $X$ is a smooth complete intersection of dimension $n-r$.
Moreover, if $n-r\ge 1$, then $X$ is irreducible.
\end{lemma}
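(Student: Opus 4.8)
The idea is to pass to the affine cone and deduce everything from the Jacobian criterion together with standard commutative algebra (Krull's height theorem, Cohen--Macaulayness of complete intersections, Serre's criterion) and, for the last assertion, a connectedness theorem. Write $S = \overline{\qq}[X_0,\dots,X_n]$, $J = (F_1,\dots,F_r)$, let $\widehat{X} = Z(J) \subseteq \mathbb{A}^{n+1}$ be the affine cone over $X$, and set $R = S/J$, so $X = \operatorname{Proj} R$. We may assume the $F_i$ have positive degree and $n \ge r$; then, since $\operatorname{ht} J \le r$ by Krull's height theorem, $\dim R \ge n+1-r \ge 1$, so $\widehat X$ strictly contains its vertex $0$. \textbf{Step 1 (smoothness of the cone off the vertex).} The hypothesis $\rank J(\bm{x}) = r$ at every nonzero $\bm{x}\in\widehat X$ is exactly the statement — via the Jacobian criterion over the perfect field $\overline{\qq}$ — that $\widehat X$ is smooth, equivalently regular since $\operatorname{char}\overline{\qq}=0$, at every point of $\widehat X\setminus\{0\}$, with local dimension $n+1-r$ there. \textbf{Step 2 (dimension; regular sequence).} Each irreducible component of $\widehat X$ has dimension $\ge n+1-r \ge 1$ by Krull, and, being a cone, meets $\widehat X\setminus\{0\}$ at its generic point, where Step 1 forces local dimension exactly $n+1-r$. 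Hence every component of $\widehat X$ has dimension $n+1-r$ and $\operatorname{ht} J = r$; since $S$ is Cohen--Macaulay, the $r$ generators $F_1,\dots,F_r$ of the height-$r$ ideal $J$ form a regular sequence, so $R$ is a complete intersection, in particular Cohen--Macaulay, of dimension $n+1-r$.

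\textbf{Step 3 ($J$ is radical; $X$ is a smooth complete intersection).} Being Cohen--Macaulay, $R$ satisfies Serre's condition $(S_1)$. It also satisfies $(R_0)$: every minimal prime of $R$ is the generic point of a component of $\widehat X$, which lies in $\widehat X\setminus\{0\}$ because $\dim R\ge 1$, and there $R$ is regular by Step 1. By Serre's criterion $R$ is reduced, so $J$ is a radical homogeneous ideal with $\sqrt{J}\neq (X_0,\dots,X_n)$ (as $\dim R\ge 1$), and the projective Nullstellensatz yields $J = I(Z(J)) = I(X)$. Then $X$ has dimension $\dim R - 1 = n-r$ and is smooth because $\widehat X$ is smooth away from $0$; this proves the first two assertions.

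\textbf{Step 4 (irreducibility when $n-r\ge 1$).} Now $\dim R = n+1-r\ge 2$, and $R$ Cohen--Macaulay gives $\operatorname{depth}$ of $R$ at the irrelevant maximal ideal equal to $\dim R \ge 2$. By Hartshorne's connectedness theorem for graded rings (equivalently, the vanishing of $H^0_{\mathfrak m}(R)$ and $H^1_{\mathfrak m}(R)$ in degree $0$ identifies $R_0=\overline{\qq}$ with $H^0(X,\mathcal{O}_X)$; one can also obtain this by running the Koszul complex of the regular sequence $F_1,\dots,F_r$ through the cohomology of $\mathbb{P}^n$), the variety $X$ is connected. Finally, a regular Noetherian scheme has integral local rings, so its irreducible components are pairwise disjoint and hence coincide with its connected components; the connected smooth variety $X$ is therefore irreducible.

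\textbf{Main obstacle.} Steps 1--3 amount to careful bookkeeping with the Jacobian and Serre criteria, and the application in Section~\ref{sect:parametrization} only needs the statements to be quoted from \cite{BH}. The genuine content is Step 4: smoothness by itself does not force irreducibility (a smooth variety may be disconnected), so one must invoke connectedness of a positive-dimensional complete intersection — either via Hartshorne's connectedness theorem for Cohen--Macaulay graded rings, or via a direct Koszul-cohomology computation of $H^0(X,\mathcal{O}_X)$ — and then upgrade ``connected and smooth'' to ``irreducible.'' That is the step I expect to require the most care.
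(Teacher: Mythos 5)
Your argument is correct, but it takes a different route from the paper: the paper does not prove the lemma at all, it simply quotes \cite[Lemma~3.2]{BH} and notes that geometric integrality there gives irreducibility over $\overline{\qq}$, whereas you reconstruct a self-contained proof. Your chain of reasoning checks out: the Jacobian criterion plus Krull's height theorem gives smoothness and pure dimension $n+1-r$ of the affine cone away from the vertex, hence $\operatorname{ht}(F_1,\dotsc,F_r)=r$ and (by Cohen--Macaulayness of the polynomial ring) a regular sequence; $(R_0)+(S_1)$ then yields reducedness, so the projective Nullstellensatz identifies $(F_1,\dotsc,F_r)$ with $I(X)$, giving the smooth complete intersection statement; and for $n-r\ge 1$ you correctly isolate the real content, namely that smoothness alone does not preclude disconnectedness, and you supply the missing ingredient via $\operatorname{depth}_{\mathfrak m}R=\dim R\ge 2$ and the connectedness of the punctured spectrum (equivalently $H^0_{\mathfrak m}(R)=H^1_{\mathfrak m}(R)=0$), after which ``connected $+$ regular $\Rightarrow$ irreducible'' is immediate since regular local rings are domains. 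Two small points of bookkeeping: your standing assumptions that the $F_i$ have positive degree and $n\ge r$ are genuinely needed (otherwise $Z(F_1,\dotsc,F_r)$ can be empty and $I(X)$ need not be $(F_1,\dotsc,F_r)$), and they are implicit in \cite{BH} and satisfied in the paper's application ($r=2$, $n=3$); also, in Step~2 it is worth saying explicitly that each irreducible component of the cone is itself a cone because the minimal primes over a homogeneous ideal are homogeneous. What your approach buys is independence from the external reference and a transparent identification of where each hypothesis is used; what the paper's citation buys is brevity and the stronger formulation (geometric integrality) already recorded in \cite{BH}.
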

\begin{proof}[Proof of Lemma~\ref{lemma:BH}]
See {\cite[Lemma~3.2]{BH}}. Note that it is proved that $X$ is
geometrically integral in \cite[Lemma~3.2]{BH}, which implies that $X$ is irreducible over $\overline{\qq}$.
\end{proof}

Let $C$ be an irreducible curve.
The arithmetic genus of $C$ is denoted by $p_a(C)$.
If $C$ is smooth, then $p_a(C)$ coincides with the geometric genus of $C$.
Let $\tilde{C}$ be the normalization of $C$.
Then $\tilde{C}$ is a smooth curve.
We call $p_a(\tilde{C})$ the genus of $C$.

\begin{lemma}\label{lemma:genus_CI}
Let $F_1,F_2\in\overline{\qq}[X_0,X_1,X_2,X_3]$ be homogeneous polynomials with $\deg F_1=d$ and $\deg F_2=e$.
Let $C=Z(F_1,F_2)\subset\mathbb{P}^3$.
If $C$ is a complete intersection, then we have $p_a(C)=de(d+e-4)/2+1$.
\end{lemma}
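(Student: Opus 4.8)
The plan is to read off the arithmetic genus of $C$ from its Hilbert polynomial, which I would compute via the Koszul resolution attached to the regular sequence $F_1,F_2$. Since $C=Z(F_1,F_2)\subset\mathbb{P}^3$ is a complete intersection of codimension $2$, the ideal sheaf $\mathcal{I}_C$ is generated by $F_1,F_2$, and these form a regular sequence in the Cohen--Macaulay ring $\overline{\qq}[X_0,X_1,X_2,X_3]$; hence the Koszul complex provides a locally free resolution
\[
0 \to \mathcal{O}_{\mathbb{P}^3}(-d-e) \to \mathcal{O}_{\mathbb{P}^3}(-d)\oplus\mathcal{O}_{\mathbb{P}^3}(-e) \to \mathcal{I}_C \to 0 .
\]
Combining this with the structure sequence $0\to\mathcal{I}_C\to\mathcal{O}_{\mathbb{P}^3}\to\mathcal{O}_C\to 0$, twisting both by $\mathcal{O}_{\mathbb{P}^3}(m)$, and using additivity of the Euler characteristic on short exact sequences together with $\chi(\mathcal{O}_{\mathbb{P}^3}(k))=\binom{k+3}{3}$, I obtain
\[
\chi(\mathcal{O}_C(m)) = \binom{m+3}{3} - \binom{m-d+3}{3} - \binom{m-e+3}{3} + \binom{m-d-e+3}{3},
\]
where each binomial is read as the polynomial $\tfrac{1}{6}(k+1)(k+2)(k+3)$ in $k$.

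Next I would specialize to $m=0$. Writing $g(x)=\binom{3-x}{3}=\tfrac{1}{6}(1-x)(2-x)(3-x)$, so that $g(0)=1$, the displayed identity reads $\chi(\mathcal{O}_C)=g(0)-g(d)-g(e)+g(d+e)$. Expanding the three cubics, a short calculation gives $g(d)+g(e)-g(d+e)=\tfrac{1}{2}de(d+e-4)+1$, hence $\chi(\mathcal{O}_C)=-\tfrac{1}{2}de(d+e-4)$. Since $C$ is a curve, its arithmetic genus satisfies $p_a(C)=1-\chi(\mathcal{O}_C)$ (equivalently $h^1(\mathcal{O}_C)-h^0(\mathcal{O}_C)+1$, with $h^0(\mathcal{O}_C)=1$ because a complete intersection of positive dimension in $\mathbb{P}^3$ is connected), and substituting the value just found yields $p_a(C)=de(d+e-4)/2+1$, as claimed.

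The one step needing care is the input to the resolution above: that the complete-intersection hypothesis really forces $I(C)=(F_1,F_2)$, and that $F_1,F_2$ then form a regular sequence. The latter is automatic, since in the Cohen--Macaulay ring $\overline{\qq}[X_0,X_1,X_2,X_3]$ a pair of forms whose common zero locus has codimension $2$ is a regular sequence; the former is exactly what being a complete intersection means here, so that $\mathcal{O}_C$ is the sheaf associated to $\overline{\qq}[X_0,X_1,X_2,X_3]/(F_1,F_2)$. Everything after that is a routine polynomial identity in $d$ and $e$. A shorter alternative bypasses Hilbert polynomials entirely: a complete intersection is Gorenstein with dualizing sheaf $\omega_C\cong\mathcal{O}_C(d+e-4)$ by adjunction (from $\omega_{\mathbb{P}^3}=\mathcal{O}_{\mathbb{P}^3}(-4)$ and the conormal bundle $\mathcal{O}_C(-d)\oplus\mathcal{O}_C(-e)$), while $\deg C=de$ by B\'ezout's theorem for complete intersections; then $2p_a(C)-2=\deg\omega_C=de(d+e-4)$ gives the same formula.
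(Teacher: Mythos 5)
Your argument is correct. The paper itself gives no proof of this lemma beyond citing Hartshorne, Exercise I.7.2(d), and your Koszul-resolution/Euler-characteristic computation is exactly the standard argument underlying that citation: the regular sequence $F_1,F_2$ gives the resolution of $\mathcal{I}_C$, additivity of $\chi$ yields $\chi(\mathcal{O}_C(m))$ as the alternating sum of binomial polynomials, and evaluating at $m=0$ with $p_a(C)=1-\chi(\mathcal{O}_C)$ gives $de(d+e-4)/2+1$ (your polynomial identity for $g(d)+g(e)-g(d+e)$ checks out). You were also right to flag the one real point of care: the hypothesis must be read as $I(C)=(F_1,F_2)$ (so that $\mathcal{O}_C$ is the sheaf attached to $\overline{\qq}[X_0,\dotsc,X_3]/(F_1,F_2)$ and the degrees $d,e$ are the relevant ones), which is the intended meaning here and is what the paper's applications supply, e.g.\ via Lemma~\ref{lemma:BH}; given that, regularity of the sequence follows from the codimension count in a Cohen--Macaulay ring as you say. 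Your alternative via $\omega_C\cong\mathcal{O}_C(d+e-4)$ and $\deg C=de$ is equally valid (complete intersections are Gorenstein, so $2p_a-2=\deg\omega_C$ applies even in the singular case) and is arguably the quicker route, though it leans on slightly heavier duality input than the Hilbert-polynomial calculation.
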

\begin{proof}[Proof of Lemma~\ref{lemma:genus_CI}]
See \cite[Exercise~I.7.2~(d)]{Hartshorne}.
\end{proof}
\begin{remark}
We can calculate the arithmetic genus of a complete intersection of any dimension.
For example, see \cite{AS}.
\end{remark}

\begin{lemma}\label{lemma:arithmetic_genus}
Let $C$ be an irreducible curve and let $\tilde{C}$ be the
normalization of $C$.
For $P\in C$, we write $\mathcal{O}_P$ and $\tilde{\mathcal{O}}_P$
for the local ring at $P$ and the integral closure of
$\mathcal{O}_P$, respectively.
Let $\delta_P=\operatorname{length}
(\tilde{\mathcal{O}}_P/\mathcal{O}_P)$.
Then we have
\[
    p_a(C) = p_a(\tilde{C}) + \sum_{P\in C} \delta_P.
\]
\end{lemma}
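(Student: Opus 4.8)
The plan is to derive the identity from the standard short exact sequence relating $\mathcal{O}_C$ to the pushforward of $\mathcal{O}_{\tilde{C}}$ under the normalization morphism, and then to read off the genus relation by comparing Euler characteristics of structure sheaves; this is classical (compare \cite[Ch.~IV]{Hartshorne}). Throughout, I may assume $C$ — and hence $\tilde{C}$ — is projective, since the arithmetic genus of a curve is only invoked for projective curves in this paper; then the relevant cohomology groups are finite-dimensional over $\overline{\qq}$, and by the definition of the arithmetic genus of a connected projective curve one has $\chi(\mathcal{O}_C)=1-p_a(C)$ and $\chi(\mathcal{O}_{\tilde{C}})=1-p_a(\tilde{C})$.

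First I would take $\nu\colon\tilde{C}\to C$ to be the normalization morphism. It is finite, birational, and an isomorphism above the smooth locus $C\setminus\Sigma$, where $\Sigma$ denotes the (finite) singular locus of $C$. On stalks at a point $P\in C$ the natural inclusion $\mathcal{O}_C\hookrightarrow\nu_*\mathcal{O}_{\tilde{C}}$ is the inclusion $\mathcal{O}_P\hookrightarrow\tilde{\mathcal{O}}_P$, which is an equality for $P\notin\Sigma$; hence its cokernel is a skyscraper sheaf $\mathcal{F}$ with support $\Sigma$ and stalk $\mathcal{F}_P=\tilde{\mathcal{O}}_P/\mathcal{O}_P$, giving the short exact sequence
\[
    0 \longrightarrow \mathcal{O}_C \longrightarrow \nu_*\mathcal{O}_{\tilde{C}} \longrightarrow \mathcal{F} \longrightarrow 0 .
\]
Since $\mathcal{F}$ is supported on a finite set, $H^1(C,\mathcal{F})=0$, so $\chi(\mathcal{F})=h^0(C,\mathcal{F})=\sum_{P\in\Sigma}\dim_{\overline{\qq}}(\tilde{\mathcal{O}}_P/\mathcal{O}_P)$; over the field $\overline{\qq}$ this dimension equals $\operatorname{length}(\tilde{\mathcal{O}}_P/\mathcal{O}_P)=\delta_P$, and $\delta_P=0$ for $P\notin\Sigma$, so $\chi(\mathcal{F})=\sum_{P\in C}\delta_P$.

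Next I would use that $\nu$ is finite, hence $R^i\nu_*\mathcal{O}_{\tilde{C}}=0$ for $i\ge 1$, so that $H^i(C,\nu_*\mathcal{O}_{\tilde{C}})\cong H^i(\tilde{C},\mathcal{O}_{\tilde{C}})$ and therefore $\chi(\nu_*\mathcal{O}_{\tilde{C}})=\chi(\mathcal{O}_{\tilde{C}})$. Taking Euler characteristics in the displayed exact sequence yields $\chi(\mathcal{O}_C)=\chi(\mathcal{O}_{\tilde{C}})-\sum_{P\in C}\delta_P$, and substituting $\chi(\mathcal{O}_C)=1-p_a(C)$ and $\chi(\mathcal{O}_{\tilde{C}})=1-p_a(\tilde{C})$ and rearranging gives the claimed formula $p_a(C)=p_a(\tilde{C})+\sum_{P\in C}\delta_P$.

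The only step that is more than formal bookkeeping is the local analysis of the cokernel of $\mathcal{O}_C\hookrightarrow\nu_*\mathcal{O}_{\tilde{C}}$: one must invoke finiteness of the normalization of a curve (so that $\nu_*\mathcal{O}_{\tilde{C}}$ is coherent with stalk $\tilde{\mathcal{O}}_P$ at $P$), the fact that $\mathcal{O}_P=\tilde{\mathcal{O}}_P$ exactly at the smooth points, and the finiteness of the singular locus, in order to conclude that $\mathcal{F}$ is genuinely a skyscraper sheaf of finite total length with the stated stalks. Once that sheaf-theoretic picture is set up, the rest is just additivity of $\chi$ along a short exact sequence together with the definition of $p_a$, so I do not anticipate any real obstacle beyond getting this local description right.
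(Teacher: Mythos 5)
Your proof is correct. The paper does not argue this lemma at all: it simply cites Hartshorne, Exercise~IV.1.8~(a), and Liu, Proposition~7.5.4, and the argument you give -- the short exact sequence $0\to\mathcal{O}_C\to\nu_*\mathcal{O}_{\tilde{C}}\to\mathcal{F}\to 0$ for the normalization, vanishing of higher direct images for the finite map $\nu$, additivity of the Euler characteristic, and the identification $\chi(\mathcal{F})=\sum_P\delta_P$ for the skyscraper cokernel -- is precisely the standard proof behind those references. Your two housekeeping points are also handled appropriately: restricting to projective curves is harmless since the paper only uses $p_a$ for curves in $\mathbb{P}^3$, and over $\overline{\qq}$ the length of $\tilde{\mathcal{O}}_P/\mathcal{O}_P$ coincides with its dimension as a $\overline{\qq}$-vector space because all residue fields equal the base field.
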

\begin{proof}[Proof of Lemma~\ref{lemma:arithmetic_genus}]
See \cite[Exercise~IV.1.8~(a)]{Hartshorne} or \cite[Proposition~7.5.4]{Liu}.
\end{proof}

\subsection{A geometric characterization of Hilbert-Kamke equations} \label{subsect:geometric}

The study of antipodal rational designs for symmetric measures is naturally motivated by the following fact. 

\begin{theorem}[\cite{MMS2024}]\label{thm:antipodality}
Let $m=2\ell-1$ be an odd integer and $n$ be a positive integer with $n \le m+1$.
Then the Hilbert-Kamke equations~\eqref{eq:HK} for symmetric classical measures have only antipodal solutions.
\end{theorem}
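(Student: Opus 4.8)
The plan is to reduce antipodality to the vanishing of the odd elementary symmetric functions of the solution, and then obtain that vanishing from Newton's identities, using only that a symmetric classical measure has vanishing odd moments.

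First I would record the translation. For a symmetric classical measure one has $a_k=0$ for every odd $k$, so any solution $(x_1,\dots,x_n)$ of \eqref{eq:HK} of degree $m=2\ell-1$ has power sums $p_k:=\sum_{i=1}^n x_i^k$ satisfying $p_k = n a_k = 0$ for all odd $k\le m$. Write $P(X)=\prod_{i=1}^n(X-x_i)=\sum_{j=0}^n(-1)^j e_j X^{n-j}$ with $e_0=1$. Since $(-1)^nP(-X)=\prod_{i=1}^n\bigl(X-(-x_i)\bigr)$, the multiset $\{x_i\}$ equals $\{-x_i\}$ if and only if $P(X)=(-1)^nP(-X)$, i.e.\ if and only if $e_j=0$ for every odd $j$ with $1\le j\le n$. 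So the theorem is equivalent to the vanishing of all odd $e_j$.

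Next I would prove that vanishing by induction on odd $j$, $1\le j\le n$. Newton's identity, valid for $j\le n$, reads $(-1)^j j\,e_j = -\sum_{i=0}^{j-1}(-1)^i e_i p_{j-i}$. Examine the right-hand side: the term $i=0$ is $-p_j$, which vanishes because $j$ is odd and $p_k=0$ for odd $k\le m$ (this is the one point needing care: since $n\le m+1=2\ell$ is even while $j$ is odd, one has $j\le m$, so $p_j$ is genuinely one of the prescribed vanishing power sums); a term with $i$ odd and $i<j$ vanishes since $e_i=0$ by the induction hypothesis; and a term with $i$ even makes $j-i$ odd with $1\le j-i\le m$, so $p_{j-i}=0$. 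Hence $(-1)^j j\,e_j=0$, and since we work in characteristic zero, $e_j=0$, closing the induction. Then $P$ has only monomials $X^{n-j}$ with $j$ even, so $P(-X)=(-1)^nP(X)$, and the solution is antipodal.

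The step I would flag as the main obstacle is really just the parity bookkeeping at the boundary case $n=m+1$: there the power sum $p_n$ is not among the data of \eqref{eq:HK}, but because $m+1$ is even and every index $j$ we must kill is odd, $p_n$ is never invoked, and each such odd $j$ in fact satisfies $j\le m$ so that $p_j$ itself is prescribed to be zero. Once this is observed the induction runs uniformly for all $n\le m+1$, and since nothing beyond the vanishing of the odd moments is used, the argument applies verbatim to the Hermite measure and to the Gegenbauer measures (in particular the Chebyshev measure).
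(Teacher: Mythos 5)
Your proof is correct, and it follows exactly the route the paper indicates for this result (which it cites from Misawa--Munemasa--Sawa): vanishing odd power sums plus Newton's identities force all odd elementary symmetric functions to vanish, hence $P(-X)=(-1)^nP(X)$ and the solution is antipodal. Your parity observation at the boundary $n=m+1$ (odd $j\le n$ implies $j\le m$, so $p_n$ is never needed) is precisely the point that makes the induction close, so there is no gap.
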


This result was first observed by the present authors only for $(m,n) = (5,6)$.
Afterwards, the general version was established by Misawa et al.~\cite{MMS2024}.
The proof of Theorem~\ref{thm:antipodality} makes use of Newton's identity on elementary symmetric polynomials and power sums.

Now, we have a look at applications of Theorem~\ref{thm:antipodality} for $(m,n) = (5,3),(5,4),(5,5)$.
First we consider a $5$-design with $n$ rational points for Gegenbauer measure $(1-t^2)^{\lambda-1/2}dt/\int_{-1}^1 (1-t^2)^{\lambda-1/2} dt$, namely
\begin{equation}\label{eq:QF3-5-Chebyshev}
    \frac{1}{n} \sum_{i=1}^n x_i^k = \frac{1}{\int_{-1}^1 (1-t^2)^{\lambda-1/2} dt} \int_{-1}^1 t^k (1-t^2)^{\lambda-1/2} dt, \quad
    k = 1,2,3,4,5.
\end{equation}
The moments $a_k$ are computed as follows.
For a nonnegative integer $k$, we have
\[
    \int_{-1}^1 t^{2k} (1-t^2)^{\lambda-1/2} dt
    = 2 \int_0^1 t^{2k} (1-t^2)^{\lambda-1/2} dt.
\]
Putting $x=t^2$, we have
\begin{equation}
\label{eq:moment_Gegenbauer}
\begin{gathered}
   2 \int_0^1 t^{2k} (1-t^2)^{\lambda-1/2} dt
    = \int_0^1 x^{k-1/2} (1-x)^{\lambda-1/2} dx \\
    = B\left(k+\frac{1}{2},\lambda+\frac{1}{2}\right)
    = \frac{\Gamma(k+1/2)\Gamma(\lambda+1/2)}{\Gamma(k+\lambda+1)},
\end{gathered}
\end{equation}
where $B(t)$ and $\Gamma(t)$ are the beta and gamma functions,
respectively.
Then
\begin{align}
    a_2 &= \frac{\Gamma(3/2)\Gamma(\lambda+1/2)}{\Gamma(\lambda+2)}
    \cdot \frac{\Gamma(\lambda+1)}{\Gamma(1/2)\Gamma(\lambda+1/2)}
    = \frac{1}{2(\lambda+1)}, \label{eq:moment_Gegenbauer2} \\
    a_4 &= \frac{\Gamma(5/2)\Gamma(\lambda+1/2)}{\Gamma(\lambda+3)}
    \cdot \frac{\Gamma(\lambda+1)}{\Gamma(1/2)\Gamma(\lambda+1/2)}
    = \frac{3}{4(\lambda+1)(\lambda+2)}. \label{eq:moment_Gegenbauer3}
\end{align}
We also have $a_{2\ell-1} = 0$ for $\ell=1,2,\ldots$.

By Theorem~\ref{thm:antipodality}, $x_1,\dotsc,x_n$ are antipodal and hence we may assume that $x_i=-x_{n+1-i}$ for $i=1,\dotsc,n$.
For $n=3$, the solutions of \eqref{eq:QF3-5-Chebyshev} are $\lambda=0$ and $x_1=\pm\sqrt{3}/2$.
Hence there do not exist $5$-designs with $3$ rational points.
For $n=4,5$, after eliminating $\lambda$, the equations \eqref{eq:QF3-5-Chebyshev} can be reduced to finding rational points of the curves
\begin{gather}
    2 (x_1^4 + x_2^4) (x_1^2 + x_2^2 + 1) = 3 (x_1^2 + x_2^2)^2,
    \label{eq:n4} \\
    (x_1^4 + x_2^4) (4 x_1^2 + 4 x_2^2 + 5) = 6 (x_1^2 + x_2^2)^2,
    \label{eq:n5}
\end{gather}
respectively.

\begin{theorem}
\hangindent\leftmargini
\label{thm:Gegenbauer}
\textup{(i)} The rational points of \eqref{eq:n4} are completely classified by
\[
    (x_1,x_2) = (-1,-1), (-1,1), (0,0), (1,-1), (1,1).
\]
In particular there do not exist rational $5$-designs with $4$ points for Gegenbauer measure $(1-t^2)^{\lambda-1/2}dt/\int_{-1}^1 (1-t^2)^{\lambda-1/2} dt$ on $(-1,1)$.
\begin{enumerate}
\setcounter{enumi}{1}
\item The rational points of \eqref{eq:n5} are completely classified by
\[
    (x_1,x_2) = \left(-\frac{1}{2},0\right), \left(0,-\frac{1}{2}\right), (0,0), \left(0,\frac{1}{2}\right), \left(\frac{1}{2},0\right).
\]
In particular there do not exist rational $5$-designs with $5$ points for Gegenbauer measure $(1-t^2)^{\lambda-1/2}dt/\int_{-1}^1 (1-t^2)^{\lambda-1/2} dt$ on $(-1,1)$.
\end{enumerate}
\end{theorem}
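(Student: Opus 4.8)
The plan is to treat (i) and (ii) as explicit determinations of the rational points on two affine curves in $\mathbb{A}^2$, using the algebro-geometric toolkit assembled in Subsection~\ref{subsect:rational}. For each curve $C$ given by \eqref{eq:n4} or \eqref{eq:n5}, the first step is to pass to the projective closure $\overline{C}\subset\mathbb{P}^3$ by introducing two auxiliary coordinates (e.g.\ $u=x_1^2+x_2^2$ and a homogenizing variable $z$), so that the defining equations become a pair of homogeneous forms $F_1,F_2$; then verify via the Jacobian criterion (the notion of \emph{nonsingular system} from Subsection~\ref{subsect:rational}, together with Lemma~\ref{lemma:BH}) that the intersection is a smooth complete intersection, hence irreducible. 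Here one must be a little careful: the raw equations are of high degree in $x_1,x_2$, so the natural move is to first work with the symmetric functions, presenting the curve as an intersection of surfaces of low degree (degrees $2$ and $3$, or $2$ and $4$), since \eqref{eq:n4} and \eqref{eq:n5} are symmetric in $x_1\leftrightarrow x_2$ and even in each variable.

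Next I would compute the genus. Once the curve is exhibited as a complete intersection $Z(F_1,F_2)\subset\mathbb{P}^3$ with $\deg F_1=d$, $\deg F_2=e$, Lemma~\ref{lemma:genus_CI} gives the arithmetic genus $p_a=de(d+e-4)/2+1$ directly; if the model is singular, Lemma~\ref{lemma:arithmetic_genus} is used to subtract $\sum_P\delta_P$ over the (finitely many) singular points and recover the geometric genus $g(\tilde C)$ of the normalization. With degrees $(2,4)$ one expects $p_a=2\cdot4\cdot2/2+1=9$ before desingularization, so the singularity analysis at the points at infinity and at the coordinate points is essential to bring the genus down to the value — I expect $g=2$ — that makes Chabauty's method applicable. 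The heart of the matter is then to invoke Chabauty: compute (or bound) the Mordell--Weil rank of the Jacobian $J$ of $\tilde C$ over $\mathbb{Q}$, check that $\operatorname{rank} J(\mathbb{Q})<g(\tilde C)$, and run the \texttt{Chabauty} routine in Magma (as flagged in Subsection~\ref{subsect:rational}) to obtain the complete list $\tilde C(\mathbb{Q})$. Pulling this list back to the affine $(x_1,x_2)$-plane, discarding points at infinity and any spurious points introduced by the $u$-substitution, should yield exactly the five listed solutions in each case.

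Finally, from the classification of rational points one reads off the design-theoretic conclusion. For a genuine rational $5$-design with $4$ (resp.\ $5$) points one needs the $x_i$ to be \emph{distinct} elements of $(-1,1)$ together with a valid value $\lambda>-1/2$; but every rational point on \eqref{eq:n4} has $\{x_1,x_2\}\subset\{-1,0,1\}$ with repeated or endpoint values, and similarly for \eqref{eq:n5}, so no admissible configuration survives. (One should also double-check the $\lambda$ recovered from each surviving point: e.g.\ $(0,0)$ forces a degenerate measure.)

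The step I expect to be the main obstacle is the genus computation together with the Mordell--Weil rank verification: the explicit resolution of the singularities of $Z(F_1,F_2)$ at infinity — counting the $\delta$-invariants correctly so that the normalization genus comes out exactly $2$ — is delicate, and establishing $\operatorname{rank} J(\mathbb{Q})\le 1$ may require a descent argument or a $2$-Selmer group computation rather than a one-line Magma call. Everything downstream of a successful Chabauty computation is routine bookkeeping.
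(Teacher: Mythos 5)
Your overall strategy (compute a genus, check a Mordell--Weil rank, run Chabauty, then read off the design-theoretic conclusion) points in the right general direction, but there is a genuine gap at its core: you plan to apply Chabauty to (a projective model of) the curve \eqref{eq:n4} or \eqref{eq:n5} itself, expecting its genus to come out as $2$. It does not. These are plane sextics admitting the involution $(x_1,x_2)\mapsto(x_1,-x_2)$, and the paper's genus-$2$ curves are the \emph{quotients} by that involution, not the curves themselves: setting $t=x_2^2/x_1^2$ (for $x_1\neq 0$) and clearing denominators turns \eqref{eq:n4} into $2(1+t^2)(1+t)x_1^2=t^2+6t+1$, and the substitution $x=2t$, $y=8(1+t^2)(1+t)x_1$ gives the hyperelliptic curve $C_1\colon y^2=x^5+14x^4+32x^3+64x^2+112x+32$ of genus $2$ (similarly $C_2$ for \eqref{eq:n5}). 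The original curve is a degree-$2$ cover of this quotient, so by Riemann--Hurwitz its genus is strictly larger than $2$; consequently your embedding into $\mathbb{P}^3$ via $u=x_1^2+x_2^2$ (which is just a re-embedding, not a quotient), the expected arithmetic genus $9$ from Lemma~\ref{lemma:genus_CI}, and the hope of resolving singularities down to genus $2$ cannot all be realized, and the built-in \texttt{Chabauty} routine in Magma, which the paper relies on precisely because it handles genus-$2$ hyperelliptic models $y^2=f(x)$, would not apply to your model. Computing and bounding the Mordell--Weil rank of the higher-dimensional Jacobian of the original curve would also be far harder than the rank-$1$ verification for $\operatorname{Jac}(C_1)$ and $\operatorname{Jac}(C_2)$ that the paper performs.

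The missing idea, then, is the symmetry reduction: exploit the evenness in $x_2$ (and homogeneity in $(x_1,x_2)$) to pass to the genus-$2$ quotient, determine its rational points by Chabauty (rank $1<2$), and only afterwards pull back, using $t=x_2^2/x_1^2\ge 0$ to discard points with negative $x$-coordinate and treating the excluded locus $x_1=0$ by a separate direct computation (e.g.\ $2x_2^4(x_2^2+1)=3x_2^4$ forces $x_2\in\{0,\pm 1/\sqrt{2}\}$, so only $(0,0)$ is rational). Your final step, reading off the nonexistence of rational $5$-designs with $4$ or $5$ points from the short lists of rational points, is essentially correct, but as written your proof would stall at the Chabauty stage without the quotient construction.
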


To prove Theorem~\ref{thm:Gegenbauer}, we will use the function \texttt{Chabauty} in Magma~\cite{Magma}, which implements Chabauty's method
(see Subsection~\ref{subsect:rational}).

\begin{proof}[Proof of Theorem~\ref{thm:Gegenbauer}]
We first consider the equation \eqref{eq:n4}.
Assume that $x_1\neq 0$ and set $t=x_2^2/x_1^2$.
Eliminating $x_2$ from \eqref{eq:n4}, we have
\[
    2 (x_1^4 + t^2 x_1^4) (x_1^2 + t x_1^2 + 1)
    = 3 (x_1^2 + t x_1^2)^2.
\]
Dividing both sides by $x_1^4$, we have
\[
    2 (1 + t^2) ((1 + t) x_1^2 + 1) = 3 (1 + t)^2,
\]
that is,
\[
    2 (1 + t^2) (1 + t) x_1^2 = t^2 + 6 t + 1.
\]
Putting $x=2t$ and $y = 8 (1 + t^2) (1 + t) x_1$, we obtain a genus $2$ curve
\[
    C_1 \colon y^2 = x^5 + 14 x^4 + 32 x^3 + 64 x^2 + 112 x + 32.
\]
We can verify that the Mordell-Weil rank of the Jacobian variety of $C_1$ is equal to $1$ with Magma.
Then, by using Magma's function \texttt{Chabauty}, we have
\[
    C_1(\qq) = \{ \infty, (-2,0), (2,-32), (2,32) \},
\]
where $\infty$ is the point at infinity.
Since $t=x_2^2/x_1^2\ge 0$, we have $x\ge 0$.
Hence it is sufficient to consider the points
$(2,-32)$ and $(2,32)$ on $C_1$.
When $(x,y)=(2,-32)$, we obtain $t=1$ and $x_1=-1$.
Thus we have $x_2^2=1$.
Similarly, we obtain $x_1=1$ and $x_2^2=1$ when $(x,y)=(2,32)$.
Therefore we have $(x_1,x_2)=(-1,-1),(-1,1),(1,-1),(1,1)$.
When $x_1=0$, substituting it into \eqref{eq:n4}, we have
\[
    2 x_2^4 (x_2^2 + 1) = 3 x_2^4.
\]
Then we have $x_2=0,\pm 1/\sqrt{2}$.
Since $x_2$ is rational, we have $(x_1,x_2)=(0,0)$.

Next we consider the equation \eqref{eq:n5}.
We assume that $x_1\neq 0$ and set $t=x_2^2/x_1^2$.
Eliminating $x_2$ from \eqref{eq:n5}, we have
\[
    (x_1^4 + t^2 x_1^4) (4 x_1^2 + 4 t x_1^2 + 5)
    = 6 (x_1^2 + t x_1^2)^2.
\]
Dividing both sides by $x_1^4$, we have
\[
    (1 + t^2) (4 (1 + t) x_1^2 + 5) = 6 (1 + t)^2,
\]
that is,
\[
    4 (1 + t^2) (1 + t) x_1^2 = t^2 + 12 t + 1.
\]
Putting $x=t$ and $y = 2 (1 + t^2) (1 + t) x_1$, we obtain a genus $2$ curve
\[
    C_2 \colon y^2 = x^5 + 13 x^4 + 14 x^3 + 14 x^2 + 13 x + 1.
\]
We can verify that the Mordell-Weil rank of the Jacobian variety of $C_2$ is equal to $1$ with Magma.
Then, by using Magma's function \texttt{Chabauty}, we have
\[
    C_2(\qq) = \{ \infty, (-1,0), (0,-1), (0,1) \}.
\]
Since $t=x_2^2/x_1^2\ge 0$, we have $x\ge 0$.
Hence we have $t=x=0$ and $x_2=0$.
By \eqref{eq:n5}, we have $x_1^4(4x_1^2+5)=6x_1^4$.
Since $x_1\neq 0$, we have $4x_1^2+5=6$.
Therefore we obtain $x_1=\pm 1/2$.
When $x_1=0$, by a similar argument, we have $x_2=0, \pm 1/2$.
\end{proof}

Next we consider the Hermite measure case for $n \in \{3,4,5\}$.
Then the moments $a_k$ are given as
\begin{equation}\label{eq:Hermite_moment}
	a_{2k} = \frac{(2k)!}{2^{2k} k!},\quad a_{2k+1} = 0, \quad k=0,1,\ldots
\end{equation}
By Theorem~\ref{thm:antipodality}, if 
\[
    \frac{1}{n} \sum_{i=1}^n x_i^k = \frac{1}{\sqrt{\pi}} \int_{-\infty}^\infty t^k e^{-t^2}dt, \quad
    k = 1,2,3,4,5,
\]
then $x_i$ are antipodal, say $x_i = -x_{n+1-i}$.
As in the Chebyshev-Gauss quadrature (\ref{eq:GaussChebyshev}), a $5$-design for $n=3$ is uniquely determined by the zeros of Hermite polynomial
\[
H_3(t) = t^3-3t = t(t-\sqrt{3})(t+\sqrt{3}),
\]
which is not rational.
By (\ref{eq:Hermite_moment}), a $5$-design for $n = 4$ or $n = 5$ is reduced to the equations
\[
x_1^2 + x_2^2 = 1, \quad x_1^4 + x_2^4 = \frac{3}{2}
\]
or
\[
x_1^2 + x_2^2 = \frac{5}{4}, \quad x_1^4 + x_2^4 = \frac{15}{8}.
\]
In any case, the solutions are irrational. In summary, we obtain the following result.

\begin{theorem}\label{thm:Hermite}
For $n = 3,4,5$, there do not exist rational $5$-designs with $n$ points for $e^{-t^2}dt/\sqrt{\pi}$ on $(-\infty,\infty)$.
\end{theorem}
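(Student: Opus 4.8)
The plan is to combine Theorem~\ref{thm:antipodality} with the explicit Hermite moments \eqref{eq:Hermite_moment}; the argument is short and essentially self-contained once the antipodality reduction is available.

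First I would apply Theorem~\ref{thm:antipodality}: since $m=5$ is odd and $n\le 6$ for each $n\in\{3,4,5\}$, every $5$-design with $n$ points for $e^{-t^2}dt/\sqrt{\pi}$ is antipodal. Because the points of a design are pairwise distinct and $x\mapsto -x$ is an involution on the point set, the origin occurs with multiplicity at most $1$, and it must occur when $n$ is odd; the remaining points split into pairs $\{c,-c\}$ with $c\neq 0$. Hence, up to relabelling, a $5$-design has the shape $\{0,\pm a\}$ when $n=3$, $\{\pm a,\pm b\}$ when $n=4$, and $\{0,\pm a,\pm b\}$ when $n=5$, where $a,b\in\qq$ are nonzero with $a\neq\pm b$. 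Since all odd moments vanish, the design conditions reduce to the even-degree equations for $k=1,2$, which by \eqref{eq:Hermite_moment} involve only $a_2=\tfrac12$ and $a_4=\tfrac34$.

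For $n=3$ the degree-$2$ equation is $\tfrac13(a^2+a^2)=\tfrac12$, i.e.\ $a^2=\tfrac34$; since $\tfrac34$ is not the square of a rational number, no rational $a$ exists (equivalently, as already noted, such a design would have to be the Gauss--Hermite rule, whose nodes are the irrational zeros of $H_3$). For $n=4$ and $n=5$ I would set $s_1=a^2$ and $s_2=b^2$, so $s_1,s_2\in\qq_{>0}$; the degree-$2$ and degree-$4$ equations then become $s_1+s_2=c$ and $s_1^2+s_2^2=c'$, with $(c,c')=(1,\tfrac32)$ for $n=4$ and $(c,c')=(\tfrac54,\tfrac{15}{8})$ for $n=5$. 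Consequently $2s_1s_2=c^2-c'$, which equals $-\tfrac12$ and $-\tfrac{5}{16}$ respectively, contradicting $s_1,s_2>0$. This proves Theorem~\ref{thm:Hermite}; in fact the same sign contradiction shows there is no real $5$-design with $3$, $4$, or $5$ points.

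There is no genuinely hard step here. The only places needing a little care are (i) the reduction to the antipodal normal forms, where one must check which point may coincide with the origin and how the remaining points pair up under $x\mapsto -x$, and (ii) the bookkeeping of $a_2$ and $a_4$ from \eqref{eq:Hermite_moment}; everything else is a two-line computation. I would present the three cases in parallel rather than separately, to emphasise that the obstruction is the same in each.
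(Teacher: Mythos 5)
Your proof is correct and follows essentially the same route as the paper: Theorem~\ref{thm:antipodality} reduces each case to the antipodal normal form, and the moments \eqref{eq:Hermite_moment} yield exactly the systems $x_1^2+x_2^2=1$, $x_1^4+x_2^4=\tfrac{3}{2}$ (resp.\ $x_1^2+x_2^2=\tfrac{5}{4}$, $x_1^4+x_2^4=\tfrac{15}{8}$) that the paper writes down, with your sign computation $2s_1s_2=c^2-c'<0$ simply making explicit the paper's assertion that the solutions are irrational (indeed non-real, consistent with Remark~\ref{rem:Krylov}). The only minor caveat is your closing aside: for $n=3$ the real obstruction is not a sign contradiction but the inconsistency of the degree-$2$ and degree-$4$ equations ($a^2=\tfrac{3}{4}$ versus $a^4=\tfrac{9}{8}$), which does not affect the validity of your proof of the theorem itself.
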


\begin{remark}\label{rem:Krylov}
For $n=4,5$, there do not exist $5$-designs with $n$ (not necessarily rational) points for $e^{-t^2}dt/\sqrt{\pi}$ on $(-\infty,\infty)$; more details can be found in Gautschi~\cite{G1975}.
\end{remark}

\section{Proof of Theorem~\ref{thm:deg3}} \label{sect:small_deg}

Let us consider the Hermite measure $e^{-t^2}dt/\sqrt{\pi}$ on $(-\infty,\infty)$.
By (\ref{eq:Hermite_moment}), an antipodal $3$-design of type
\begin{align}
\frac{1}{n} \sum_{i=1}^{n} f(x_i)
&= \frac{1}{\sqrt{\pi}} \int_{-\infty}^{\infty} f(t) e^{-t^2}dt
\label{eq:Hermite_even}
\end{align}
is equivalent to a disjoint solution of the Diophantine equations
\begin{align}
   x_1 +   x_2  + \cdots  +   x_n  & = 0,              \label{eq:Hermite_deg1} \\
x_1^2 + x_2^2 + \cdots  + x_n^2 & = \frac{n}{2}. \label{eq:Hermite_deg2}
\end{align}

It is obvious that the equations (\ref{eq:Hermite_deg1}) and (\ref{eq:Hermite_deg2}) have no rational solutions for $n=1,2$. Thus, the smallest case to be solved is $n=3$.

\begin{lemma}\label{lem:deg3_Hermite}
There do not exist $3$-designs with $3$ rational points for $e^{-t^2}dt/\sqrt{\pi}$ on $(-\infty,\infty)$.
\end{lemma}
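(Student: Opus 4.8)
The plan is to use the reduction already in place: since a $3$-design with $3$ points is a solution of the Hilbert-Kamke equations~\eqref{eq:HK} with $(m,n)=(3,3)$, and $n=3\le m+1=4$, Theorem~\ref{thm:antipodality} forces the configuration to be antipodal. An antipodal multiset of size three whose points are distinct must consist of the unique fixed point of $x\mapsto -x$ together with one $2$-cycle, i.e.\ it has the form $\{0,a,-a\}$ with $a\neq 0$. Substituting into \eqref{eq:Hermite_deg2} with $n=3$ gives $2a^2=3/2$, hence $a^2=3/4$ and $a=\pm\sqrt{3}/2\notin\qq$, a contradiction. This proves the lemma.

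I would also record a self-contained variant that bypasses Theorem~\ref{thm:antipodality}. Suppose $x_1,x_2,x_3\in\qq$ are distinct and form a $3$-design; by Proposition~\ref{prop:HK_1} and \eqref{eq:Hermite_moment} they satisfy $\sum_i x_i=0$, $\sum_i x_i^2=3/2$, $\sum_i x_i^3=0$. Feeding these three power sums into Newton's identities for three variables yields $e_1=0$, $e_2=-3/4$, $e_3=0$ for the elementary symmetric polynomials $e_1,e_2,e_3$ of $x_1,x_2,x_3$. Hence $x_1,x_2,x_3$ are exactly the roots of
\[
 t^3-e_1t^2+e_2t-e_3 = t^3-\frac{3}{4}t = t\Bigl(t-\frac{\sqrt{3}}{2}\Bigr)\Bigl(t+\frac{\sqrt{3}}{2}\Bigr),
\]
so $\{x_1,x_2,x_3\}=\{0,\sqrt{3}/2,-\sqrt{3}/2\}$, which is not contained in $\qq$.

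There is essentially no obstacle here: after invoking Theorem~\ref{thm:antipodality} (or Newton's identities) the proof is a two-line computation, and the cubic that appears is precisely the one whose roots are the nodes of the $3$-point Gauss-type formula for the Hermite weight, so the phenomenon is simply that these nodes are irrational. The only point worth a word is the distinctness requirement built into the notion of a \emph{design}: it rules out the degenerate possibility $x_1=x_2=x_3$, but this is automatic here since $t^3-\frac34 t$ is separable, so in fact no rational triple solves the system at all.
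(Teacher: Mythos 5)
Your first argument is exactly the paper's proof: invoke Theorem~\ref{thm:antipodality} to force the configuration $\{0,a,-a\}$, then get $2a^2=3/2$ and the irrational $a=\pm\sqrt{3}/2$, a contradiction. The Newton's-identities variant you add is correct as well (it essentially inlines the mechanism behind Theorem~\ref{thm:antipodality} and recovers the Hermite nodes $0,\pm\sqrt{3}/2$), but it does not change the substance of the argument.
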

\begin{proof}[Proof of Lemma~\ref{lem:deg3_Hermite}]
Suppose contrary.
Then by Theorem~\ref{thm:antipodality}, 
\[
x_1^k + x_2^k + x_3^k = 3a_k, \quad k=1,2,3
\]
have only antipodal solutions.
Without loss of generality, we may let $x_1=0$ and $x_2 = -x_3$.
This is however impossible, since $3/2 = 3a_2 = 2x_2^2$ by (\ref{eq:Hermite_moment}).
\end{proof}

\begin{lemma}\label{lem:deg4_Hermite}
For any positive integer $n \ge 4$ except possibly for $n = 7$, there exists an antipodal $3$-design with $n$ rational points for $e^{-t^2}dt/\sqrt{\pi}$ on $(-\infty,\infty)$.
\end{lemma}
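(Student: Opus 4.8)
The plan is to prove existence for every admissible $n$ by combining a handful of explicit small designs through Proposition~\ref{prop:composition}. I would begin with a reduction. If $\{x_1,\dots,x_n\}$ is an antipodal configuration, then $\sum_i x_i^j = 0$ for every odd $j$, and since $a_1 = a_3 = 0$ for the Hermite measure, such a configuration is an antipodal $3$-design precisely when $\sum_{i=1}^n x_i^2 = n/2$. Writing an antipodal set of $2N$ distinct points as $\{\pm r_1,\dots,\pm r_N\}$ and one of $2N+1$ distinct points as $\{0,\pm r_1,\dots,\pm r_N\}$ with $r_1,\dots,r_N$ distinct positive rationals, the problem becomes that of finding such $r_i$ with $r_1^2+\dots+r_N^2$ equal to $N/2$ when $n=2N$, and to $(2N+1)/4$ when $n=2N+1$.

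Next I would record three base cases: $\{\pm 3/5,\pm 4/5\}$ for $n=4$, $\{0,\pm 1/2,\pm 1\}$ for $n=5$, and $\{\pm 1/10,\pm 7/10,\pm 1\}$ for $n=6$, each of which is checked directly against the criterion above. What matters is that each lies in an infinite family: the solution sets of $r_1^2+r_2^2 = 1$, of $r_1^2+r_2^2 = 5/4$, and of $r_1^2+r_2^2+r_3^2 = 3/2$ are the rational points of, respectively, two conics and a quadric surface each carrying a rational point, hence are infinite, and imposing that the $r_i$ be distinct, nonzero, and disjoint from any prescribed finite subset of $\qq$ removes only a lower-dimensional subfamily. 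Consequently, for each $n_0\in\{4,5,6\}$ there are antipodal $3$-designs with $n_0$ points avoiding any prescribed finite set of rationals.

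Then I would run the composition, taking care that the point $0$, which is forced in every odd-size antipodal configuration, is never repeated. For even $n\ge 8$ one has $n/2\ge 4$, so $n/2 = 2a+3b$ and hence $n = 4a+6b$ for some $a,b\ge 0$; for odd $n\ge 9$ one likewise has $(n-5)/2\ge 2$, so $n = 5 + 4a+6b$. In the first case glue together $a$ disjoint $4$-point designs and $b$ disjoint $6$-point designs, all free of $0$; in the second, add a single $5$-point design, contributing the one point $0$. Choosing these finitely many pieces one at a time, each disjoint from the union of the previous ones (possible by the infinitude just noted), their union is antipodal and, by iterating Proposition~\ref{prop:composition}, an antipodal $3$-design with $n$ points. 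Together with the base cases $n=4,5,6$ this yields a design for every $n\ge 4$ with $n\ne 7$, which is precisely the assertion; the value $n=7$ falls outside the construction because $7$ is not a sum of $4$'s, $5$'s and $6$'s, and is in fact genuinely impossible since the corresponding requirement $r_1^2+r_2^2+r_3^2 = 7/4$ would express $7$ as a sum of three rational squares — though that direction is the business of Theorem~\ref{thm:deg3}.

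The one step that genuinely needs care is the disjointness bookkeeping: one must verify that inside each infinite family one can always avoid the finitely many "walls" where coordinates coincide or vanish, together with the finitely many conditions forbidding overlap with the finitely many points already used. This is routine, and the remainder of the argument consists of short explicit verifications.
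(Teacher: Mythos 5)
Your proposal is correct and follows essentially the same route as the paper: reduce to the single condition $\sum_i x_i^2 = n/2$, exhibit base designs of sizes $4$, $5$ (the only block containing $0$) and $6$, and glue them with Proposition~\ref{prop:composition}, using the infinitude of rational points on the relevant conics (and quadric) to keep the pieces disjoint, which covers every $n\ge 4$ except $n=7$. The only differences are cosmetic — you quote explicit base points and a density argument where the paper writes down the circle parametrizations \eqref{eq:four}, \eqref{eq:five} and reuses the $6$-point design of Example~\ref{ex:Chebyshev1}.
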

\begin{proof}[Proof of Lemma~\ref{lem:deg4_Hermite}]
For $n=4$, the equations (\ref{eq:Hermite_deg1}) and (\ref{eq:Hermite_deg2}) have an antipodal solution, say $x_3 = -x_1$ and $x_4 = -x_2$, which can be reduced to finding rational points on the unit circle
$\mathbb{S}^1 = \{(x,y) \in \mathbb{R}^2 \mid x^2+y^2=1 \}$.
All rational points of $\mathbb{S}^1$ are completely classified by
\begin{equation}\label{eq:four}
x_1 = \frac{1-t^2}{1+t^2}, \quad x_2 = \frac{2t}{1+t^2}, \quad t \in \qq.
\end{equation}

Similarly for $n=5$, the equations (\ref{eq:Hermite_deg1}) and (\ref{eq:Hermite_deg2}) have an antipodal solution, say $x_4=-x_1, x_5=-x_2, x_3=0$, which can be  reduced to finding rational points on the circle $x_1^2+x_2^2=\frac{5}{4}$.
All rational points on this circle are completely classified by
\begin{equation}\label{eq:five}
x_1 = \frac{t^2-t-1}{1+t^2}, \quad x_2 = \frac{-t^2-4t+1}{2+2t^2}, \quad t \in \qq.
\end{equation}

As already seen in Example~\ref{ex:Chebyshev1},
\[
(x_1,x_2,x_3,x_4,x_5,x_6) = \Big(\frac{1}{7}, \frac{11}{14}, \frac{-13}{14}, -\frac{1}
{7}, -\frac{11}{14}, -\frac{13}{14} \Big)
\]
presents an antipodal $3$-design ($5$-design) with $6$ rational points.
Note that infinitely many disjoint rational points $(x_1,x_2)$ on circles can be chosen from (\ref{eq:four}) and (\ref{eq:five}).
Therefore, by Proposition~\ref{prop:composition}, we obtain an antipodal rational $3$-design
for $n \ge 4$ except possibly for $n=7$.
\end{proof}

The following shows the nonexistence of antipodal $3$-designs for $n = 7$.

\begin{lemma}\label{lem:deg7_Hermite}
A positive rational number $b$ is a sum of three squares of rational numbers if and only if $b$ is not of the form $r^2(8k+7)$, where $r$ is a positive rational number and $k$ is a nonnegative integer.
\end{lemma}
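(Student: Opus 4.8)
The plan is to reduce the statement to the classical Davenport–Cassels–type result over $\qq$, namely the rational three-square theorem: a positive rational is a sum of three rational squares if and only if it is not of the form $4^e(8k+7)$ for nonnegative integers $e,k$. The excerpt already records the integer version (the Gauss three-square theorem, Remark~\ref{rem:Gauss}) and its use in proving Theorem~\ref{thm:deg3}, so the task here is purely to translate between the integer statement and the claimed rational normal form $r^2(8k+7)$.

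First I would show that the set of forbidden positive integers $\{4^e(8k+7) : e,k \ge 0\}$ coincides with the set $\{u^2(8k+7) : u \ge 1,\ k \ge 0\}$ intersected with $\zz$; more precisely I would argue that a positive integer $n$ is \emph{not} a sum of three integer squares if and only if $n = u^2 m$ with $m \equiv 7 \pmod 8$ squarefree. This is standard: writing $4^e(8k+7) = (2^e)^2(8k+7)$ gives one inclusion, and conversely if $n = u^2 m$ with $m$ squarefree and $m \equiv 7 \pmod 8$, then the odd part of $u^2$ contributes only squares $\equiv 1 \pmod 8$, so $n \equiv 4^e(8k'+7)$ after absorbing the power of $2$ in $u$; the key elementary fact is that an odd square is $\equiv 1 \pmod 8$.

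Second, I would pass from $\zz$ to $\qq$ by clearing denominators. Given a positive rational $b$, write $b = p/q$ with $p,q$ positive integers, so that $b$ is a sum of three rational squares if and only if $bq^2 = pq$ is a sum of three rational squares, and by the usual scaling argument (multiply through by the common denominator of the three rationals and note $q^2 \cdot (\text{rational square})$ has controlled denominator) this holds if and only if the integer $pq$ is a sum of three \emph{integer} squares. Thus $b$ fails to be a sum of three rational squares exactly when $pq = u^2 m$ with $m \equiv 7 \pmod 8$ squarefree, i.e.\ when $b = pq/q^2 = (u/q)^2 m$; setting $r = u/q$ and noting $m = 8k+7$ gives $b = r^2(8k+7)$ with $r \in \qq_{>0}$. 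Conversely, if $b = r^2(8k+7)$ then $b$ is $r^2$ times a number that is not a sum of three rational squares, hence is itself not such a sum, since being a sum of three rational squares is preserved under multiplication by rational squares.

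The only genuine subtlety — and hence the step I would be most careful with — is the well-definedness of the normal form: the representation $b = r^2(8k+7)$ is not unique, so I must check that the property ``$b$ is of this form'' depends only on $b$ and not on a choice of $r$ or $k$. This comes down to the fact that the squarefree part of $pq$ (equivalently, the class of $b$ in $\qq_{>0}^\times/(\qq_{>0}^\times)^2$) is a well-defined invariant, and that being congruent to $7 \pmod 8$ is a property of the squarefree representative; once this is in place, the equivalence follows cleanly from the rational three-square theorem. No genuinely hard input beyond the classical theorem is needed; the proof is a bookkeeping argument about squarefree parts and residues modulo $8$.
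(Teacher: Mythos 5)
Your overall route is essentially the paper's: clear denominators, invoke the Gauss three-square theorem for integers, and track the shape $4^e(8k+7)$ under multiplication by squares via the fact that odd squares are $\equiv 1 \pmod 8$. The one step whose justification does not hold up as written is the central claim that the integer $pq$ is a sum of three rational squares if and only if it is a sum of three integer squares. The ``usual scaling argument'' does not give this: multiplying through by the common denominator $d$ of the three rationals only shows that $pq\,d^2$ is a sum of three integer squares, and descending from $pq\,d^2$ to $pq$ is precisely the nontrivial point. You must either quote the Davenport--Cassels (Aubry) theorem as a black box --- in which case say so, since it is a genuine theorem rather than a scaling trick --- or argue directly that the forbidden shape is stable under multiplication by integer squares: if $pq = 4^e(8k+7)$ and $d = 2^f d'$ with $d'$ odd, then $(d')^2(8k+7) \equiv 7 \pmod 8$ gives $pq\,d^2 = 4^{e+f}(8k''+7)$, so $pq\,d^2$ would not be a sum of three integer squares, a contradiction. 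This mod-$8$ computation is exactly what the paper carries out in its converse direction (writing $c = 2^e c'$ with $c'$ odd), and you perform the same kind of computation in your first step, so the gap is short to fill; but as stated, the sentence asserting the rational/integer equivalence is where the actual content of the lemma sits, and it is not proved.

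A smaller point: your opening formulation of a ``rational three-square theorem'' --- a positive rational is a sum of three rational squares if and only if it is not of the form $4^e(8k+7)$ --- is incorrect as stated, since a non-integral rational such as $7/9$ is not of that form and yet is not a sum of three rational squares; the correct criterion over the rationals is the lemma itself (squarefree part $\equiv 7 \pmod 8$, equivalently the form $r^2(8k+7)$). Your actual argument does not rely on that misstatement, but the closing appeal in your last paragraph to ``the rational three-square theorem'' should instead be an appeal to the integer theorem combined with the descent step above.
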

\begin{proof}[Proof of Lemma~\ref{lem:deg7_Hermite}]
Suppose that $b$ is not of the form $r^2(8k+7)$.
We can write $b = c/d^2$, where $c$ and $d$ are positive integers.
Then there exist unique
non-negative integers $e,k$ and $l$, for which $c = 4^e(8k+l)$ and $l \in \{1,2,3,5,6,7\}$.
Since $b$ is not of the form $r^2(8k+7)$, we have $l \ne 7$.
By Remark~\ref{rem:Gauss}, there exist integers $l_1,l_2,l_3$ such that $l_1^2+l_2^2+l_3^2=c$.
Hence we have $(l_1/d)^2 + (l_2/d)^2 + (l_3/d)^2 = b$.

Conversely, suppose that $b = r^2(8k+7)$ and there exist rational numbers $m_1,m_2,m_3$ for which $m_1^2 + m_2^2 + m_3^2 = b$.
We can write $m_i = n_i/d$ and $r = c/d$, where $n_1,n_2,n_3,c,d$ are integers.
Then we have $n_1^2 + n_2^2 + n_3^2 = c^2(8k+7)$.
Let $c = 2^e c'$, where $e$ is a non-negative integer and $c'$ is an odd integer.
Since $(c')^2 \equiv 1 \pmod{8}$, we have $c^2 (8k+7) = 4^e(8k'+7)$, where $k'$ is a non-negative integer.
This is a contradiction to the comment in Remark~\ref{rem:Gauss}.
\end{proof}

We are now in a position to complete the proof of Theorem~\ref{thm:deg3}.

\noindent
\begin{proof}[Proof of Theorem~\ref{thm:deg3}]
In the Hermite measure case, combining Lemmas~\ref{lem:deg3_Hermite},~\ref{lem:deg4_Hermite} and~\ref{lem:deg7_Hermite}, we obtain the desired result.
\end{proof}

We close this section by discussing an analogue of Theorem~\ref{thm:deg3} for the Gegenbauer measure $(1-t^2)^{\lambda-1/2}dt/\int_{-1}^1 (1-t^2)^{\lambda-1/2} dt$ with $\lambda > -1/2$.
For example when $\lambda=0$, the corresponding measure is the Chebyshev measure $(1-t^2)^{-1/2}dt/\pi$.
By (\ref{eq:moment_Gegenbauer2}), the second moment for the Chebyshev measure equals $a_2 = 1/2$, which coincides with that for Hermite measure.
Hence, as a corollary of Theorem~\ref{thm:deg3}, we obtain the following result.

\begin{corollary}\label{cor:deg3}
There exists an antipodal $3$-design with $n$ rational points for the Chebyshev measure $(1-t^2)^{-1/2}dt/\pi$ on $(-1,1)$ if and only if $n \notin \{1,2,3,7\}$.
\end{corollary}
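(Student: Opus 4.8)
The plan is to derive Corollary~\ref{cor:deg3} from the already-proved Theorem~\ref{thm:deg3}, the only new ingredient being that $a_2=1/2$ for the Chebyshev measure (set $\lambda=0$ in (\ref{eq:moment_Gegenbauer2})), which is exactly its value for the Hermite measure, while all odd moments again vanish. For the necessity direction I would note that for an antipodal configuration $\{x_1,\dots,x_n\}\subset\qq\cap(-1,1)$ the power sums $\sum_i x_i^k$ automatically vanish for odd $k$, so such a configuration is a $3$-design for the Chebyshev measure if and only if $\sum_i x_i^2=na_2=n/2$, i.e.\ if and only if it satisfies (\ref{eq:Hermite_deg1})--(\ref{eq:Hermite_deg2}). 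Since the Hermite measure imposes no constraint on where the points lie, every antipodal rational $3$-design for the Chebyshev measure is in particular an antipodal rational $3$-design for the Hermite measure, and Theorem~\ref{thm:deg3} then forces $n\notin\{1,2,3,7\}$. (The cases $n=1,2,3$ are anyway elementary.)

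For sufficiency the implication must be run in reverse, and the one genuinely new point --- the single place this is not a verbatim copy of the Hermite argument --- is that the constructed points must lie in the \emph{open} interval $(-1,1)$. For $n=4$ this costs nothing: any rational point $(x_1,x_2)$ of the unit circle $x_1^2+x_2^2=1$ with $x_1,x_2\in(-1,1)$, say one from a primitive Pythagorean triple or from (\ref{eq:four}) with a suitable $t\in\qq$, gives the antipodal rational $3$-design $\{\pm x_1,\pm x_2\}\subset(-1,1)$, whose four points are distinct because on the unit circle $x_1=\pm x_2$ would require $x_1^2=1/2$ and $x_1x_2=0$ would put one coordinate outside $(-1,1)$; moreover infinitely many such designs exist, and from them one may select arbitrarily many that are pairwise disjoint and avoid any prescribed finite set. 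For $n=5$ the constraint actually bites, since the relevant conic $x_1^2+x_2^2=5/4$ has radius exceeding $1$: one must take a rational point on the admissible arc $1/2<|x_1|,|x_2|<1$, for instance (\ref{eq:five}) at $t=-\frac14$, which gives $(x_1,x_2)=(-\frac{11}{17},\frac{31}{34})$ and hence the design $\{0,\pm\frac{11}{17},\pm\frac{31}{34}\}$. For $n=6$ one uses the explicit design $\{\pm\frac17,\pm\frac{11}{14},\pm\frac{13}{14}\}$ of Example~\ref{ex:Chebyshev1}.

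All remaining sizes are then assembled by composition via Proposition~\ref{prop:composition}, the point being that a union of pairwise disjoint antipodal rational $3$-designs contained in $(-1,1)$ is again one of the same kind. The only bookkeeping subtlety is that an antipodal design of odd size necessarily contains $0$, so the size-$5$ block (the only odd block at our disposal) and the single explicit size-$6$ design may each be used at most once, whereas the size-$4$ designs are available without limit. Thus it suffices to write every even $n\ge 4$ as $4a+6c$ and every odd $n\ge 9$ as $5+4a+6c$, in each case with $a\ge 0$ and $c\in\{0,1\}$ --- both possible because every integer $\ge 2$ equals $2a+3c$ for some $a\ge 0$, $c\in\{0,1\}$ --- and then to take $a$ pairwise disjoint size-$4$ designs together with at most one size-$5$ and at most one size-$6$ design, all mutually disjoint (possible since the size-$4$ family is infinite). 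Combined with the base cases $n=5,6$ and the impossibility of $n=7$ inherited from the necessity argument, this realizes precisely $\{n:n\ge 4,\ n\ne 7\}$, which is the claim. I do not anticipate a serious obstacle here: the only things needing care are the open-interval requirement (felt only at $n=5$) and the elementary semigroup bookkeeping just described.
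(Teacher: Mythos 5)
Your proposal is correct and follows essentially the same route as the paper, which simply deduces the corollary from Theorem~\ref{thm:deg3} via the coincidence $a_2=1/2$ (with all odd moments vanishing) together with composition as in Proposition~\ref{prop:composition}. Your explicit check that the building blocks can be taken inside the open interval $(-1,1)$ --- notably replacing, for the size-$5$ block, an arbitrary rational point on the circle $x_1^2+x_2^2=5/4$ by one with both coordinates of absolute value less than $1$, such as $(-11/17,\,31/34)$ --- is a detail the paper treats as implicit, and you handle it correctly.
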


\begin{remark}\label{rem:twothree}
An analogue of Theorem~\ref{thm:deg3} can be obtained for many values of $\lambda > -1/2$, although there is not always a systematic way of treating the degree three case.
\end{remark}

\section{Proof of Theorem~\ref{thm:parametrization0}} \label{sect:parametrization}

By Theorem~\ref{thm:antipodality}, a $5$-design with $6$ points for symmetric classical measure is always antipodal, which can be reduced to the equations
\begin{equation}\label{eq:HK4}
   \begin{split}
       x_1^2 + x_2^2 + x_3^2 &= 3 a_2, \\
       x_1^4 + x_2^4 + x_3^4 &= 3 a_4.
   \end{split}
\end{equation}
By substituting $x_i=X_i/X_0$ in \eqref{eq:HK4}, we have the system of homogeneous equations
\begin{align*}
   X_1^2 + X_2^2 + X_3^2 - 3 a_2 X_0^2 &= 0, \\
   X_1^4 + X_2^4 + X_3^4 - 3 a_4 X_0^4 &= 0.
\end{align*}
We consider curves defined by such equations.

In the following, we use the algebro-geometric terminology introduced in Subsection~\ref{subsect:rational}.
We write $\bm{X}=(X_0,X_1,X_2,X_3)$ and define
\begin{equation}\label{eq:Jacob0} \begin{gathered}
    F_1(\bm{X}) = X_1^2 + X_2^2 + X_3^2 - \alpha X_0^2, \\
    F_2(\bm{X}) = X_1^4 + X_2^4 + X_3^4 - \beta X_0^4,
\end{gathered} \end{equation}
where $\alpha,\beta\in\overline{\qq}^\times$.
Let $C=Z(F_1,F_2)\subset\mathbb{P}^3$.

\begin{lemma}\label{lemma:nonsingular}
The set $\{F_1,F_2\}$ is a nonsingular system if and only if $\alpha^2\notin\{\beta,2\beta,3\beta\}$.
\end{lemma}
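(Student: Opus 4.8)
The plan is to apply the Jacobian criterion directly: the set $\{F_1, F_2\}$ is a nonsingular system precisely when the Jacobian matrix
\[
J(\bm{X}) = \begin{pmatrix} -2\alpha X_0 & 2X_1 & 2X_2 & 2X_3 \\ -4\beta X_0^3 & 4X_1^3 & 4X_2^3 & 4X_3^3 \end{pmatrix}
\]
has rank $2$ at every nonzero common zero $\bm{X}$ of $F_1$ and $F_2$ in $\overline{\qq}^4$. So I would suppose that rank $J(\bm{X}) \le 1$ at some such point and derive the condition $\alpha^2 \in \{\beta, 2\beta, 3\beta\}$; conversely, I would check that each of these three values actually produces a singular point.

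First I would analyze rank-deficiency. Up to the harmless scalars, rank $J(\bm{X}) \le 1$ means the rows $(-\alpha X_0, X_1, X_2, X_3)$ and $(-\beta X_0^3, X_1^3, X_2^3, X_3^3)$ are proportional, i.e.\ all $2\times 2$ minors vanish: $X_i X_j^3 = X_j X_i^3$ for $i,j \in \{1,2,3\}$ and $\alpha X_0 X_i^3 = \beta X_0^3 X_i$ for $i \in \{1,2,3\}$. The first family says $X_i X_j (X_i^2 - X_j^2) = 0$, so for each pair $\{i,j\}$, either one of $X_i, X_j$ is zero or $X_i^2 = X_j^2$. A short case analysis on how many of $X_1, X_2, X_3$ vanish, combined with the constraint equations $F_1 = F_2 = 0$, should pin things down. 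If $X_0 = 0$: then $F_1 = 0$ forces $X_1^2 + X_2^2 + X_3^2 = 0$ and $F_2 = 0$ forces $X_1^4 + X_2^4 + X_3^4 = 0$; I expect this to contribute singular points regardless of $\alpha, \beta$ — so I should double-check whether the ``nonzero $\bm{X}$'' in the definition is meant over $\overline{\qq}^4$ including the hyperplane at infinity, or whether $X_0 = 0$ points are excluded; reading the definition in the excerpt, it is ``every nonzero $\bm{x} \in \overline{\qq}^n$'' with $n$ apparently $3$ here in the homogeneous $\mathbb{P}^3$ setting, so I will interpret it as all nonzero points of $\overline{\qq}^4$ and verify that the $X_0 = 0$ locus does not spoil nonsingularity (it should not, since there $X_1, X_2, X_3$ not all zero with $\sum X_i^2 = \sum X_i^4 = 0$ forces, after scaling, a configuration where the minors among $X_1^3 X_j - X_j^3 X_i$ do not all vanish — this needs a line or two of verification).

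The substantive case is $X_0 \ne 0$, where we may set $X_0 = 1$. Suppose $k$ of the coordinates $X_1, X_2, X_3$ are nonzero. If exactly one is nonzero, say $X_1 = s \ne 0$, $X_2 = X_3 = 0$: then $F_1 = 0$ gives $s^2 = \alpha$, $F_2 = 0$ gives $s^4 = \beta$, hence $\alpha^2 = \beta$; and the remaining minor condition $\alpha X_1^3 = \beta X_1$ becomes $\alpha s^2 = \beta$, automatically satisfied. If exactly two are nonzero with equal squares, say $X_1^2 = X_2^2 = s^2 \ne 0$, $X_3 = 0$: then $2s^2 = \alpha$, $2s^4 = \beta$, giving $\alpha^2 = 2\beta$. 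If all three are nonzero with $X_1^2 = X_2^2 = X_3^2 = s^2$: then $3s^2 = \alpha$, $3s^4 = \beta$, giving $\alpha^2 = 3\beta$. (The case where, say, $X_1, X_2$ are nonzero but $X_1^2 \ne X_2^2$ forces $X_1 X_2 = 0$ from the minor, a contradiction, so it does not arise; similarly three nonzero coordinates not all with equal squares is impossible.) This shows the only rank-deficient points over $X_0 \ne 0$ occur exactly when $\alpha^2 \in \{\beta, 2\beta, 3\beta\}$. Conversely, for each of these three values the computation above exhibits an explicit singular point (using $\alpha, \beta \in \overline{\qq}^\times$ so that a square root $s$ exists and is nonzero), so the system fails to be nonsingular. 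The main obstacle is simply bookkeeping: making the case split on the vanishing pattern of $(X_1, X_2, X_3)$ airtight and confirming that the points at infinity $X_0 = 0$ never violate the rank condition; both are elementary but must be done carefully.
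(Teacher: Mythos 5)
Your proposal is correct and follows essentially the same route as the paper's proof: write down the Jacobian, force all $2\times 2$ minors to vanish, run a case analysis on the vanishing/equal-square pattern of $(X_1,X_2,X_3)$ (treating $X_0=0$ separately, where the paper likewise checks that the vanishing of the minors together with $F_1=0$ forces the zero vector, so no singular points occur at infinity), and exhibit explicit rank-deficient points for each of $\alpha^2\in\{\beta,2\beta,3\beta\}$. The loose ends you flagged—the $X_0=0$ verification and the trivial case $X_1=X_2=X_3=0$ (excluded since $\alpha\neq 0$)—go through exactly as you sketched.
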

\begin{proof}[Proof of Lemma~\ref{lemma:nonsingular}]
The Jacobian matrix of
the system $F_1(\bm{X}) = F_2(\bm{X}) = 0$ is calculated as follows:
\[
    J(X_0,X_1,X_2,X_3) = \begin{pmatrix}
        -2 \alpha X_0 & 2 X_1 & 2 X_2 & 2 X_3 \\
        -4 \beta X_0^3 & 4 X_1^3 & 4 X_2^3 & 4 X_3^3
    \end{pmatrix}.
\]
Assume that $\{F_1,F_2\}$ is not a nonsingular system.
Then $\rank J(\bm{x})\neq 2$ for some nonzero $\bm{x}=(x_0:x_1:x_2:x_3)\in\overline{\qq}^4$ satisfying
\begin{align}
    x_1^2 + x_2^2 + x_3^2 - \alpha x_0^2 &= 0, \label{eq:quadratic} \\
    x_1^4 + x_2^4 + x_3^4 - \beta x_0^4 &= 0. \label{eq:quartic}
\end{align}
We prove that $\alpha^2\in\{\beta,2\beta,3\beta\}$.
Since all $2\times 2$ minors of $J(\bm{x})$ vanish,
we have
\begin{align}
    \alpha x_0 x_i^3 &= \beta x_0^3 x_i, \quad i = 1, 2, 3, \label{eq:relation_0i} \\
    x_i x_j^3 &= x_i^3 x_j, \quad 1 \le i < j \le 3. \label{eq:relation_ij}
\end{align}

\begin{enumerate}[label=(\Roman*)]
\item We first assume that $x_0\neq 0$.
For any $1\le i<j\le 3$, the relation \eqref{eq:relation_ij} implies
at least one of the relations $x_i=0$, $x_j=0$, and $x_i^2=x_j^2$.
\begin{enumerate}[label=(\alph*)]
\item When $x_1^2=x_2^2=x_3^2$, since $\bm{x}$ is nonzero,
$x_1, x_2, x_3$ are all nonzero.
By \eqref{eq:quadratic} and \eqref{eq:quartic}, we have
\[
    3 x_1^2 - \alpha x_0^2 = 0, \quad
    3 x_1^4 - \beta x_0^4 = 0.
\]
Eliminating $x_0$ and $x_1$, we obtain $\alpha^2=3\beta$.
\item When $x_i^2\neq x_j^2$ for some $i$ and $j$,
we may assume that $x_2^2\neq x_3^2$ without loss of generality.
Then we have $x_2 x_3=0$ by \eqref{eq:relation_ij}.
We may assume that $x_3=0$.
\begin{enumerate}[label=(\roman*)]
\item When $x_1^2=x_2^2$, by \eqref{eq:quadratic} and \eqref{eq:quartic}, we have
\[
    2 x_1^2 - \alpha x_0^2 = 0, \quad
    2 x_1^4 - \beta x_0^4 = 0.
\]
Eliminating $x_0$ and $x_1$, we obtain $\alpha^2=2\beta$.
\item When $x_1^2\neq x_2^2$, we have $x_1 x_2=0$ by \eqref{eq:relation_ij}.
We may assume that $x_2=0$.
Then, by \eqref{eq:quadratic} and \eqref{eq:quartic}, we have
\[
    x_1^2 - \alpha x_0^2 = 0, \quad
    x_1^4 - \beta x_0^4 = 0.
\]
Eliminating $x_0$ and $x_1$, we obtain $\alpha^2=\beta$.
\end{enumerate}
\end{enumerate}
\item Next, we assume that $x_0=0$.
\begin{enumerate}[label=(\alph*)]
\item When $x_1^2=x_2^2=x_3^2$,
we have $3x_1^2=0$ by \eqref{eq:quadratic}.
Hence we have $x_0=x_1=x_2=x_3=0$,
which contradicts the assumption that $\bm{x}$ is nonzero.
\item When $x_i^2\neq x_j^2$ for some $i$ and $j$,
we may assume that $x_2^2\neq x_3^2$.
Then we have $x_2 x_3=0$ by \eqref{eq:relation_ij}.
We may assume that $x_3=0$.
\begin{enumerate}[label=(\roman*)]
\item When $x_1^2=x_2^2$,
we have $2x_1^2=0$ by \eqref{eq:quadratic}.
Hence we have $x_0=x_1=x_2=x_3=0$, which is a contradiction.
\item When $x_1^2\neq x_2^2$, we have $x_1 x_2=0$ by \eqref{eq:relation_ij}.
We may assume that $x_2=0$.
Then we have $x_1^2=0$ by \eqref{eq:quadratic}.
Hence we have $x_0=x_1=x_2=x_3=0$, which is a contradiction.
\end{enumerate}
\end{enumerate}
\end{enumerate}

Conversely, we assume that $\alpha^2\in\{\beta,2\beta,3\beta\}$.
Then, the following points $\bm{x}$ satisfy \eqref{eq:quadratic},
\eqref{eq:quartic}, and $\rank J(\bm{x})\neq 2$,
where the $\pm$ signs are chosen independently.
\begin{enumerate}[label=(\Roman*)]
\item When $\alpha^2=\beta$, $\bm{x}=(1 : \pm\sqrt{\alpha} : 0 : 0), 
(1 : 0 : \pm\sqrt{\alpha} : 0), (1 : 0 : 0 : \pm\sqrt{\alpha})$.
\item When $\alpha^2=2\beta$, $\bm{x}=(\sqrt{2} : \pm\sqrt{\alpha} : \pm\sqrt{\alpha} : 0), 
(\sqrt{2} : \pm\sqrt{\alpha} : 0 : \pm\sqrt{\alpha}), (\sqrt{2} : 0 : \pm\sqrt{\alpha} : \pm\sqrt{\alpha})$.
\item When $\alpha^2=3\beta$,
$\bm{x}=(\sqrt{3} : \pm\sqrt{\alpha} : \pm\sqrt{\alpha} : \pm\sqrt{\alpha})$.
\end{enumerate}
This completes the proof.
\end{proof}

\begin{lemma}\label{lemma:curve_HK}
Let $C$ be a curve defined by the equations $F_1(\bm{X}) = 0$ and $F_2(\bm{X}) = 0$.
\begin{enumerate}
\item\label{item:general} If $\alpha^2\neq \beta, 2\beta, 3\beta$, then $C$ is an irreducible smooth curve of genus $9$. 
\item\label{item:beta} If $\alpha^2=\beta$, then $C$ is an irreducible singular curve with $6$ singular points of genus $3$.
\item\label{item:2beta} If $\alpha^2=2\beta$, then $C$ has $4$ irreducible components and each component is a smooth curve of genus $0$.
\item\label{item:3beta} If $\alpha^2=3\beta$, then $C$ has $2$ irreducible components and each component is a smooth curve of genus $1$.
\end{enumerate}
\end{lemma}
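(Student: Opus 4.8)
The plan is this. The common device is that on $C=Z(F_1,F_2)$ the two equations combine into a relation in $X_1,X_2,X_3$ alone: squaring $F_1=0$ gives $(X_1^2+X_2^2+X_3^2)^2=\alpha^2X_0^4$, and $F_2=0$ gives $X_1^4+X_2^4+X_3^4=\beta X_0^4$, so $C$ lies on $\beta(X_1^2+X_2^2+X_3^2)^2=\alpha^2(X_1^4+X_2^4+X_3^4)$; conversely this relation together with $F_1=0$ forces $F_2=0$. When $\alpha^2=\lambda\beta$ with $\lambda\in\{1,2,3\}$ the polynomial $(X_1^2+X_2^2+X_3^2)^2-\lambda(X_1^4+X_2^4+X_3^4)$ factors, and I use the factorization to split $C$. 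Part~\ref{item:general} needs nothing new: by Lemma~\ref{lemma:nonsingular} the system $\{F_1,F_2\}$ is nonsingular, so by Lemma~\ref{lemma:BH} $C$ is an irreducible smooth complete intersection curve in $\mathbb{P}^3$, and Lemma~\ref{lemma:genus_CI} with $(d,e)=(2,4)$ gives $p_a(C)=2\cdot 4\cdot(2+4-4)/2+1=9=g(C)$.

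For part~\ref{item:2beta}, $\lambda=2$ and the Heron-type identity $(X_1^2+X_2^2+X_3^2)^2-2(X_1^4+X_2^4+X_3^4)=L_1L_2L_3L_4$ holds with $L_1=X_1+X_2+X_3$, $L_2=-X_1+X_2+X_3$, $L_3=X_1-X_2+X_3$, $L_4=X_1+X_2-X_3$; the observation above then gives $C=\bigcup_{j=1}^{4}Z(F_1,L_j)$. Restricting $F_1$ to the plane $\{L_j=0\}$ yields a ternary quadratic form of discriminant $-3\alpha\neq 0$, so each $Z(F_1,L_j)$ is a smooth plane conic, hence $\cong\mathbb{P}^1$ of genus $0$; the four conics span four distinct planes, so they are pairwise distinct and $C$ has exactly four components. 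For part~\ref{item:3beta}, $\lambda=3$ and $(X_1^2+X_2^2+X_3^2)^2-3(X_1^4+X_2^4+X_3^4)=-2G_1G_2$ with $G_1=X_1^2+\omega X_2^2+\omega^2X_3^2$, $G_2=X_1^2+\omega^2X_2^2+\omega X_3^2$ ($\omega$ a primitive cube root of unity), so $C=Z(F_1,G_1)\cup Z(F_1,G_2)$. A $2\times2$-minor computation like the one in the proof of Lemma~\ref{lemma:nonsingular} shows each $\{F_1,G_j\}$ is a nonsingular system, so by Lemma~\ref{lemma:BH} each $Z(F_1,G_j)$ is an irreducible smooth complete intersection of two quadrics and Lemma~\ref{lemma:genus_CI} with $(d,e)=(2,2)$ gives genus $1$. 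The two components are distinct because $G_2\notin(F_1,G_1)$: by degrees any relation $G_2=aF_1+bG_1$ has $a,b$ constant, $a=0$ since $G_1,G_2$ are $X_0$-free, and $bG_1=G_2$ then fails on comparing coefficients.

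Part~\ref{item:beta} ($\lambda=1$) is the heart of the matter; here the relation is $X_1^2X_2^2+X_1^2X_3^2+X_2^2X_3^2=0$. I would argue: $F_1,F_2$ is a regular sequence (their common zero locus has pure dimension $1$), so $Z(F_1,F_2)$ is a Cohen--Macaulay complete-intersection scheme whose singular locus is the rank-drop locus of the Jacobian, which by the proof of Lemma~\ref{lemma:nonsingular} (case $\alpha^2=\beta$) is exactly the six points $(1:\pm\sqrt{\alpha}:0:0)$, $(1:0:\pm\sqrt{\alpha}:0)$, $(1:0:0:\pm\sqrt{\alpha})$. A Cohen--Macaulay scheme with $0$-dimensional singular locus is reduced (Serre's criterion $R_0+S_1$), so $I(C)=(F_1,F_2)$ and Lemma~\ref{lemma:genus_CI} with $(d,e)=(2,4)$ gives $p_a(C)=9$. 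A local analysis at one singular point — set $X_0=1$, $X_1=\sqrt{\alpha}+u$, use $F_1=0$ to eliminate $u$, so that on the smooth surface $\{F_1=0\}$ (coordinates $X_2,X_3$ near the point) the equation $F_2=0$ becomes $-2\alpha(X_2^2+X_3^2)+2(X_2^4+X_2^2X_3^2+X_3^4)=0$, whose tangent cone $X_2^2+X_3^2$ is a product of two distinct lines — shows the point is an ordinary node with $\delta_P=1$; by the symmetry of $C$ under permutations and sign changes of $X_1,X_2,X_3$, all six are nodes. Then Lemma~\ref{lemma:arithmetic_genus} gives $p_a(\tilde{C})=9-\sum_P\delta_P=9-6=3$. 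Finally, for irreducibility, project $(X_0:X_1:X_2:X_3)\mapsto(X_1:X_2:X_3)$: in affine coordinates $y=X_1/X_3$, $z=X_2/X_3$, $x_0=X_0/X_3$ the relations read $z^2=-y^2/(y^2+1)$ and $\alpha x_0^2=(y^4+y^2+1)/(y^2+1)$, so $\overline{\qq}(C)=\overline{\qq}(y)(z,x_0)$ is the compositum over $\overline{\qq}(y)$ of the quadratic extensions attached to $y^2+1$ and to $(y^2+1)(y^4+y^2+1)$, whose third class is $y^4+y^2+1$. Each of $y^2+1$, $y^4+y^2+1$ and $(y^2+1)(y^4+y^2+1)$ is a product of pairwise-distinct linear forms (the roots $\pm\sqrt{-1}$ and the primitive third and sixth roots of unity are all distinct), hence a non-square in $\overline{\qq}(y)$; therefore $[\overline{\qq}(C):\overline{\qq}(y)]=4$, $\overline{\qq}(C)$ is a field, and $C$ is irreducible.

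I expect part~\ref{item:beta} to be the main obstacle: parts~\ref{item:2beta} and~\ref{item:3beta} are essentially forced once the factorization identities are written down and part~\ref{item:general} is immediate, but part~\ref{item:beta} requires (a) passing safely from the scheme $Z(F_1,F_2)$ to the reduced curve $C$ so that Lemmas~\ref{lemma:genus_CI} and~\ref{lemma:arithmetic_genus} apply, (b) pinning down the precise analytic type of the six singularities in order to get $\sum_P\delta_P=6$, and (c) proving irreducibility, for which a naive symmetry argument is insufficient and the explicit $(\zz/2\zz)^2$-description of the function field appears to be the cleanest tool. As a cross-check, $p_a(\tilde{C})=3$ can also be recovered by Riemann--Hurwitz from that $(\zz/2\zz)^2$-cover of $\mathbb{P}^1$, branched over $y=\pm\sqrt{-1}$ and the four roots of $y^4+y^2+1$.
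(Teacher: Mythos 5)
Your proposal is correct, and for parts (i), (iii), (iv) it coincides with the paper's proof: nonsingularity of the system plus Lemma~\ref{lemma:BH} and Lemma~\ref{lemma:genus_CI} in case (i), the Heron-type factorization into four linear forms and smooth conics in case (iii), and the factorization $X_1^4+X_2^4+X_3^4-\sum X_i^2X_j^2=G_1G_2$ with the nonsingularity of $\{F_1,G_i\}$ in case (iv). The genuine divergence is in case (ii), $\alpha^2=\beta$. The paper verifies absolute irreducibility with Magma's \texttt{IsAbsolutelyIrreducible} and then avoids any local analysis by a squeeze: the six singular points are permuted transitively by the symmetry group, so all $\delta_P$ equal a common positive integer $\delta$, and $p_a(\tilde{C})=9-6\delta\ge 0$ forces $\delta=1$, hence genus $3$. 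You instead compute the singularities directly (the restriction of $F_2$ to the smooth surface $F_1=0$ has tangent cone $X_2^2+X_3^2$, an ordinary node, so $\delta_P=1$) and prove irreducibility by hand, exhibiting the function field as the biquadratic extension of $\overline{\qq}(y)$ determined by the square classes $y^2+1$, $y^4+y^2+1$ and their product, which also gives you a Riemann--Hurwitz cross-check of the genus. Your route buys a computer-free proof and finer information (the singularities are nodes), at the cost of more computation; the paper's route is shorter but outsources irreducibility to Magma. You are also more careful than the paper on a point it passes over silently: justifying $p_a(C)=9$ via Lemma~\ref{lemma:genus_CI} requires $I(C)=(F_1,F_2)$, which you obtain from the complete-intersection (Cohen--Macaulay) structure plus the zero-dimensional singular locus and Serre's criterion.

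Two small things in your case (ii) should be made explicit rather than left implicit. First, the phrase ``$\overline{\qq}(C)=\overline{\qq}(y)(z,x_0)$'' presupposes the irreducibility you are proving; the clean statement is that every irreducible component of $C$ dominates the $y$-line (no component lies in $\{X_3=0\}$ or in a plane $X_1=cX_3$, since those intersections are finite), its function field is a quotient of the $\overline{\qq}(y)$-algebra generated by $z,x_0$ subject to the two quadratic relations, and since that algebra is a field of degree $4$ each component maps to $\mathbb{P}^1_y$ with degree $4$; as a generic fiber of $C\to\mathbb{P}^1_y$ has exactly $4$ points, there is only one component. Second, when you quote the proof of Lemma~\ref{lemma:nonsingular} for ``exactly six singular points,'' note that under $\alpha^2=\beta$ (so $\alpha^2\ne 2\beta,3\beta$) only case (I)(b)(ii) of that case analysis can occur, which is what pins the rank-drop locus down to those six points. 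Neither remark affects the validity of your argument.
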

\begin{proof}[Proof of Lemma~\ref{lemma:curve_HK}]
\ref{item:general} By Lemma~\ref{lemma:nonsingular},
$\{F_1,F_2\}$ is a nonsingular system.
Hence, $C$ is a smooth complete intersection of a quadric and a quartic by Lemma~\ref{lemma:BH}.
Therefore, $C$ has genus $9$ by Lemma~\ref{lemma:genus_CI}.

\noindent \ref{item:beta}
We may assume that $\alpha=1$ by replacing $\sqrt{\alpha} X_0$ with $X_0$.
We can verify that the curve $C$ is irreducible over $\overline{\qq}$ with Magma's function \texttt{IsAbsolutelyIrreducible}.
The curve $C$ has $6$ singular points by the proof of Lemma~\ref{lemma:nonsingular}.
By symmetry, $\delta_P$ for all singular points $P$ have the same value $\delta$.
Hence, by Lemma~\ref{lemma:arithmetic_genus},
we have $p_a(C)=p_a(\tilde{C})+6\delta$,
where $\tilde{C}$ is the normalization of $C$.
By Lemma~\ref{lemma:genus_CI}, we have $p_a(C)=9$.
Since $\tilde{C}$ is smooth, we have $p_a(\tilde{C})\ge 0$.
Therefore we have $p_a(\tilde{C})=9-6\delta\ge 0$.
Since $\delta$ is a positive integer,
we have $p_a(\tilde{C})=9-6\cdot 1=3$.
    
\noindent \ref{item:2beta}
The proof is similar to the one for Lemma~\ref{lem:5-6-1}. Let $\bm{x}=(x_0:x_1:x_2:x_3)\in C$. Then we have
$F_1(\bm{x})=F_2(\bm{x})=0$, that is,
\begin{align}
    x_1^2 + x_2^2 + x_3^2 &= \alpha x_0^2, \label{eq:F1} \\
    x_1^4 + x_2^4 + x_3^4 &= \beta x_0^4. \label{eq:F2}
\end{align}
By squaring both sides of \eqref{eq:F1}, we have
\begin{equation}\label{eq:F1_2}
    x_1^4 + x_2^4 + x_3^4
    + 2 x_1^2 x_2^2 + 2 x_1^2 x_3^2 + 2 x_2^2 x_3^2
    = \alpha^2 x_0^4.
\end{equation}
Since $\alpha^2=2\beta$, by \eqref{eq:F2} and \eqref{eq:F1_2},
we obtain
\begin{align*}
    0 &= 2 (x_1^4 + x_2^4 + x_3^4)
    - (x_1^4 + x_2^4 + x_3^4
    + 2 x_1^2 x_2^2 + 2 x_1^2 x_3^2 + 2 x_2^2 x_3^2) \\
    &= x_1^4 + x_2^4 + x_3^4
    - 2 x_1^2 x_2^2 - 2 x_1^2 x_3^2 - 2 x_2^2 x_3^2 \\
    &= (x_1 + x_2 + x_3) (x_1 + x_2 - x_3)
    (x_1 - x_2 + x_3) (x_1 - x_2 - x_3).
\end{align*}
Let $G_{ij}(\bm{X})=X_1+(-1)^i X_2+(-1)^j X_3$ for $i,j=0,1$.
Then we have $G_{ij}(\bm{x})=0$ for some $i$ and $j$.

Conversely, if $F_1(\bm{x})=G_{ij}(\bm{x})=0$ for some $i$ and $j$,
we have $F_1(\bm{x})=F_2(\bm{x})=0$.
Therefore we have $C=Z(F_1,G_{00})\cup Z(F_1,G_{01}) \cup
Z(F_1,G_{10}) \cup Z(F_1,G_{11})$.

By eliminating $X_3$ from $F_1(\bm{X})=G_{ij}(\bm{X})=0$, we have
\[
    2 X_1^2 \pm 2 X_1 X_2 + 2 X_2^2 - \alpha X_0^2 = 0.
\]
This equation defines a smooth conic in $\mathbb{P}^2$,
which is a curve of genus $0$.
Since $G_{ij}$ is linear,
$Z(F_1,G_{ij})$ is a smooth curve of genus $0$.

\noindent \ref{item:3beta}
Although the statement can be proved by using a computer algebra system such as Magma,
we prove it by a direct calculation below.

Let $\bm{x}=(x_0:x_1:x_2:x_3)\in C$.
Then we have \eqref{eq:F1}--\eqref{eq:F1_2}.
Since $\alpha^2=3\beta$, by \eqref{eq:F2} and \eqref{eq:F1_2},
we obtain
\begin{align*}
    0 &= 3 (x_1^4 + x_2^4 + x_3^4)
    - (x_1^4 + x_2^4 + x_3^4
    + 2 x_1^2 x_2^2 + 2 x_1^2 x_3^2 + 2 x_2^2 x_3^2) \\
    &= 2 (x_1^4 + x_2^4 + x_3^4
    - x_1^2 x_2^2 - x_1^2 x_3^2 - x_2^2 x_3^2) \\
    &= 2 (x_1^2 + \omega x_2^2 + \omega^2 x_3^2)
    (x_1^2 + \omega^2 x_2^2 + \omega x_3^2),
\end{align*}
where $\omega$ is a primitive third root of unity.
Hence, setting $G_1(\bm{X})=X_1^2+\omega X_2^2+\omega^2 X_3^2$
and $G_2(\bm{X})=X_1^2+\omega^2 X_2^2+\omega X_3^2$,
we have either $G_1(\bm{x})=0$ or $G_2(\bm{x})=0$.

Conversely, for $i=1,2$,
if $F_1(\bm{x})=G_i(\bm{x})=0$,
then we have $F_1(\bm{x})=F_2(\bm{x})=0$.
Therefore we have $C=Z(F_1,G_1)\cup Z(F_1,G_2)$.

We prove that $\{F_1,G_1\}$ is a nonsingular system.
The Jacobian matrix of the system $\{F_1,G_1\}$ is
\[
    J(X_0,X_1,X_2,X_3) = \begin{pmatrix}
        -2 \alpha X_0 & 2 X_1 & 2 X_2 & 2 X_3 \\
        0 & 2 X_1 & 2 \omega X_2 & 2 \omega^2 X_3
    \end{pmatrix}.
\]
Assume that $\rank J(\bm{x})\neq 2$ for some nonzero $\bm{x}=(x_0:x_1:x_2:x_3)\in\overline{\qq}^4$ satisfying
$F_1(\bm{x})=G_1(\bm{x})=0$.
Then we have $x_i x_j=0$ for all $0\le i<j\le 3$.
From this and $F_1(\bm{x})=0$, we have $x_0=x_1=x_2=x_3=0$,
which is a contradiction.
Hence $\{F_1,G_1\}$ is a nonsingular system.
Similarly, $\{F_1,G_2\}$ is also a nonsingular system.

Therefore $Z(F_1,G_1)$ and $Z(F_1,G_2)$ are irreducible
curves of genus $1$ by Lemmas~\ref{lemma:BH} and \ref{lemma:genus_CI}.
\end{proof}

We are now in a position to complete the proof of Theorem~\ref{thm:parametrization0}.

\begin{proof}[Proof of Theorem~\ref{thm:parametrization0}]
By assumption, each irreducible component is parametrized by rational functions.
Hence each component is a curve of genus $0$ (see \cite[(2.16)]{Reid}).
By Lemma~\ref{lemma:curve_HK}, we have $(3a_2)^2=2(3a_4)$, that is, $3a_2^2=2a_4$.

As commented at the end of Subsection~\ref{subsect:QF}, the symmetric probability measures that correspond to the classical orthogonal polynomials are Hermite measure $e^{-t^2}dt/\sqrt{\pi}$ on $(-\infty,\infty)$ or Gegenbauer measure $(1-t^2)^{\lambda-1/2}dt/\int_{-1}^1 (1-t^2)^{\lambda-1/2} dt$ on $(-1,1)$.

When $w(t) = e^{-t^2}/\sqrt{\pi}$ on $(-\infty,\infty)$, by (\ref{eq:Hermite_moment}), the second and fourth moments are given by $a_2 = \frac{1}{2}$ and $a_4 = \frac{3}{4}$, which do not satisfy the condition that $3a_2^2=2a_4$.
When
$w(t) = (1-t^2)^{\lambda-1/2}/\int_{-1}^1 (1-t^2)^{\lambda-1/2} dt$ on $(-1,1)$, by (\ref{eq:moment_Gegenbauer2}) and (\ref{eq:moment_Gegenbauer3}), we have
\[
	a_2 = \frac{1}{2(\lambda + 1)}, \quad a_4 = \frac{3}{4(\lambda+1)(\lambda+2)}.
\]
Then
\[
    3 a_2^2 - 2 a_4
    = \frac{3((\lambda+2)-2(\lambda+1))}{4(\lambda+1)^2(\lambda+2)}
    = -\frac{3\lambda}{4(\lambda+1)^2(\lambda+2)}.
\]
Therefore the condition $3a_2^2=2a_4$ is equivalent to $\lambda=0$,
corresponding to the Chebyshev measure $(1-t^2)^{-1/2}dt/\pi$.
\end{proof}

\section{Proof of Theorem~\ref{thm:fivesix}} \label{sect:five-six}

For the Chebyshev measure $(1-t^2)^{-1/2}dt/\pi$, the equations (\ref{eq:HK4})
can be reduced to
\begin{equation}\label{eq:HK3}
 \begin{split}
	x_1^2 + x_2^2 + x_3^2 &= \frac{3}{2}, \\
	x_1^4 + x_2^4 + x_3^4 &= \frac{9}{8}.
 \end{split}
\end{equation}
The key part of the proof of Theorem~\ref{thm:fivesix} is Lemma~\ref{lem:5-6-1} below.
The essence of this lemma is the polynomial identity (\ref{eq:KawadaWooley}).

\begin{lemma} \label{lem:5-6-1}
Let $x_1,x_2,x_3 \in \mathbb{R}$ with $0 < x_1 < x_2 < x_3$. Let
\begin{align}
 x_1^2 + x_2^2 + x_3^2 &=\frac{3}{2}, \label{eq:5-6-1} \\
 x_1^4 + x_2^4 + x_3^4 &=\frac{9}{8}, \label{eq:5-6-2} \\
          x_1 + x_2 - x_3 & =0, \label{eq:5-6-3} \\
 x_1^2 + x_1x_2 + x_2^2& =\frac{3}{4}. \label{eq:5-6-4}
\end{align}
Then any two of the four statements being true imply the others.
\end{lemma}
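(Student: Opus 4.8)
The plan is to verify all $\binom{4}{2}=6$ pairwise implications, organized around two polynomial identities. The first is the Kawada--Wooley identity \eqref{eq:KawadaWooley} with $X=x_1$ and $Y=x_2$,
\[
2(x_1^2+x_1x_2+x_2^2)^2 = x_1^4+x_2^4+(x_1+x_2)^4.
\]
The second is the Heron-type factorization
\[
(x_1^2+x_2^2+x_3^2)^2-2(x_1^4+x_2^4+x_3^4)=(x_1+x_2+x_3)(x_2+x_3-x_1)(x_3+x_1-x_2)(x_1+x_2-x_3),
\]
whose left-hand side is $16$ times the squared area of the triangle with side lengths $x_1,x_2,x_3$. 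Both identities are confirmed by a one-line expansion, and the hypothesis $0<x_1<x_2<x_3$ will be used only to extract positive square roots and to sign the linear factors above.

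First I would dispose of the four pairs containing \eqref{eq:5-6-3}. Assume $x_3=x_1+x_2$. The elementary identity $x_1^2+x_2^2+(x_1+x_2)^2=2(x_1^2+x_1x_2+x_2^2)$ shows that \eqref{eq:5-6-1} holds if and only if $x_1^2+x_1x_2+x_2^2=3/4$, i.e.\ if and only if \eqref{eq:5-6-4} holds. The Kawada--Wooley identity gives $x_1^4+x_2^4+x_3^4=2(x_1^2+x_1x_2+x_2^2)^2$, so \eqref{eq:5-6-2} holds if and only if $(x_1^2+x_1x_2+x_2^2)^2=9/16$, and since $x_1,x_2>0$ force $x_1^2+x_1x_2+x_2^2>0$, this too is equivalent to \eqref{eq:5-6-4}. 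Thus, under \eqref{eq:5-6-3}, the three statements \eqref{eq:5-6-1}, \eqref{eq:5-6-2} and \eqref{eq:5-6-4} are mutually equivalent, which settles every pair that includes \eqref{eq:5-6-3}.

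It then remains to treat the three pairs chosen from \eqref{eq:5-6-1}, \eqref{eq:5-6-2} and \eqref{eq:5-6-4}; in each case I would first derive \eqref{eq:5-6-3} and then invoke the previous paragraph. From \eqref{eq:5-6-1} and \eqref{eq:5-6-4} one gets $x_1^2+x_2^2+x_3^2=2(x_1^2+x_1x_2+x_2^2)$, hence $x_3^2=(x_1+x_2)^2$ and $x_3=x_1+x_2$ by positivity. From \eqref{eq:5-6-2} and \eqref{eq:5-6-4}, the Kawada--Wooley identity yields $x_1^4+x_2^4+(x_1+x_2)^4=2(3/4)^2=9/8$, which together with \eqref{eq:5-6-2} forces $(x_1+x_2)^4=x_3^4$, hence again $x_3=x_1+x_2$. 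From \eqref{eq:5-6-1} and \eqref{eq:5-6-2}, the left-hand side of the Heron-type identity equals $(3/2)^2-2\cdot(9/8)=0$; since $0<x_1<x_2<x_3$ makes $x_1+x_2+x_3$, $x_2+x_3-x_1$ and $x_3+x_1-x_2$ all strictly positive, the factor $x_1+x_2-x_3$ must vanish, which is \eqref{eq:5-6-3}.

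The pair consisting of \eqref{eq:5-6-1} and \eqref{eq:5-6-2} is the only step I expect to require more than formal manipulation. Indeed, \eqref{eq:5-6-1} and \eqref{eq:5-6-2} determine only the first two elementary symmetric functions of $(x_1^2,x_2^2,x_3^2)$ --- one computes $x_1^2x_2^2+x_2^2x_3^2+x_3^2x_1^2=9/16$ --- leaving $x_1^2x_2^2x_3^2$ unconstrained, so no symmetric-function argument alone can conclude. It is precisely the Heron factorization, which realizes the simultaneous vanishing of \eqref{eq:5-6-1} and \eqref{eq:5-6-2} as the degeneracy of the triangle with sides $x_1,x_2,x_3$, combined with the strict ordering, that yields $x_3=x_1+x_2$. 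All remaining implications reduce to the two displayed identities and the sign hypotheses, so the argument involves essentially no computation beyond expanding those two identities once.
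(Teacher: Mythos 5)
Your proof is correct and follows essentially the same route as the paper's: the same two ingredients (the Kawada--Wooley identity \eqref{eq:KawadaWooley} and the factorization of $(x_1^2+x_2^2+x_3^2)^2-2(x_1^4+x_2^4+x_3^4)$ into the four linear ``triangle'' factors, which is exactly the paper's \eqref{eq:5-6-5}), combined with the ordering $0<x_1<x_2<x_3$ to kill the unwanted factors. The only difference is organizational---you first prove that under \eqref{eq:5-6-3} the statements \eqref{eq:5-6-1}, \eqref{eq:5-6-2}, \eqref{eq:5-6-4} are mutually equivalent and then reduce the three remaining pairs to \eqref{eq:5-6-3}, which is a slightly more economical bookkeeping of the same six-case check.
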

 
\begin{proof}[Proof of Lemma~\ref{lem:5-6-1}]
Suppose (\ref{eq:5-6-1}) and (\ref{eq:5-6-2}). Then it follows that
\[
 2(x_1^4 + x_2^4 + x_3^4) = \frac{9}{4} = \Big( \frac{3}{2} \Big)^2 = (x_1^2 + x_2^2 + x_3^2)^2,
\]
which implies that
\begin{equation} \label{eq:5-6-5}
\begin{gathered}
 0 = x_3^4 - 2(x_1^2 + x_2^2)x_3^3 + (x_1 + x_2)^2(x_1-x_2)^2 \\ 
    = (x_3 - x_1 - x_2)(x_3 + x_1 + x_2)(x_3 + x_1 - x_2)(x_3 - x_1 + x_2).
\end{gathered}
\end{equation}
We obtain (\ref{eq:5-6-3}) since $0 < x_1 < x_2 < x_3$. Then substituting (\ref{eq:5-6-3}) into (\ref{eq:5-6-1}) gives
\begin{equation} \label{eq:5-6-6}
 \frac{3}{2} = x_1^2 + x_2^2 + x_3^2 = x_1^2 + x_2^2 + (x_1+x_2)^2
                = 2(x_1^2 + x_1x_2 + x_2^2),
\end{equation}
which implies (\ref{eq:5-6-4}). 

Suppose (\ref{eq:5-6-1}) and (\ref{eq:5-6-3}). Then we get (\ref{eq:5-6-4}) by the same argument as  in (\ref{eq:5-6-6}). By the polynomial identity (\ref{eq:KawadaWooley}), we have
\begin{equation} \label{eq:5-6-7}
 x_1^4+x_2^4+(x_1+x_2)^4 = 2(x_1^2 + x_1x_2 + x_2^2)^2 = \frac{9}{8}
\end{equation}
and thereby (\ref{eq:5-6-2}) from (\ref{eq:5-6-3}).

Suppose (\ref{eq:5-6-2}) and (\ref{eq:5-6-3}). As in (\ref{eq:5-6-7}), substituting (\ref{eq:5-6-3}) into (\ref{eq:5-6-2}) provides
\[
 \frac{9}{8} = x_1^4 + x_2^4 + (x_1+x_2)^4 = 2(x_1^2 + x_1x_2 + x_2^2)^2.
\]
Since $x_1^2+x_1x_2+x_2^2$ is positive semi-definite, we obtain (\ref{eq:5-6-4}). Then we obtain (\ref{eq:5-6-1}) by the argument as (\ref{eq:5-6-6}).
 
Suppose (\ref{eq:5-6-1}) and (\ref{eq:5-6-4}). Then
\[
 x_1^2+x_2^2+x_3^2 = \frac{3}{2} = 2(x_1^2 + x_1x_2 + x_2^2).
\]
Since
\[
 0 = 2(x_1^2 + x_1x_2 + x_2^2) - (x_1^2+x_2^2+x_3^2) = (x_1+x_2-x_3)(x_1+x_2+x_3)
\]
and $0 < x_1 < x_2 < x_3$, we obtain (\ref{eq:5-6-3}). Then (\ref{eq:5-6-1})  and (\ref{eq:5-6-3}) imply (\ref{eq:5-6-2}), as already shown above.
 
Suppose (\ref{eq:5-6-2}) and (\ref{eq:5-6-4}). Then
\[
 x_1^4+x_2^4+x_3^4 = \frac{9}{8} = 2(x_1^2 + x_1x_2 + x_2^2)^2.
\]
Since
\begin{eqnarray*}
	     0 &=& 2(x_1^2 + x_1x_2 + x_2^2)^2 - (x_1^4+x_2^4+x_3^4)  \\
            &=& \sum_{i=0}^4 \binom{4}{i} x_1^i x_2^{4-i} - x_3^4  \\
		   &=& (x_1+x_2)^4-x_3^4  \\
		   &=& \bigl((x_1+x_2)^2 + x_3^2\bigr) \bigl(x_1+x_2 + x_3\bigr) \bigl(x_1+x_2 - x_3\bigr),
\end{eqnarray*}
we obtain (\ref{eq:5-6-3}) since $0 < x_1 < x_2 < x_3$. Then (\ref{eq:5-6-2})  and (\ref{eq:5-6-3}) imply (\ref{eq:5-6-1}), as is already shown.
 
Finally, if (\ref{eq:5-6-3}) and (\ref{eq:5-6-4}) hold, then we obtain (\ref{eq:5-6-1}) by the same argument as in (\ref{eq:5-6-6}).  (\ref{eq:5-6-1}) and (\ref{eq:5-6-3}) then imply (\ref{eq:5-6-2}), as seen above.
\end{proof}
 
\begin{proof}[Proof of Theorem~\ref{thm:fivesix}]
By Lemma~\ref{lem:5-6-1} it suffices to show that the rational solutions of (\ref{eq:5-6-3}) and (\ref{eq:5-6-4}) are classified by (\ref{eq:fivesix1_0}).
As seen in Example~\ref{ex:Chebyshev1}, $(x_1,x_2,x_3) = (1/7, 11/14, -13/14)$ is a rational solution of (\ref{eq:5-6-3}) and (\ref{eq:5-6-4}).
Then every rational point on the ellipse $C: x_1^2 + x_1x_2 + x_2^2 = \frac{3}{4}$ is the intersection point of $C$ and the line $x_2 = t ( x_1 - \frac{1}{7} ) + \frac{11}{14}$ where $t \in \qq$. Combining this with (\ref{eq:5-6-4}), we have
\begin{equation}\label{eq:line5-6-1}
 x_1 = \frac{2t^2-22t-13}{14(t^2+t+1)}, \quad
 x_2 = \frac{-13t^2-4t+11}{14(t^2+t+1)}.
\end{equation}
Then it follows from (\ref{eq:5-6-3}) and (\ref{eq:line5-6-1}) that
\begin{equation}\label{eq:line5-6-2}
 x_3 = x_1+x_2 = - \frac{11t^2+26t+2}{14(t^2+t+1)}. \qedhere
\end{equation}
\end{proof}

\section{Proof of Theorem~\ref{thm:spectrum0}} \label{sect:degree5}

As seen in the previous sections, an antipodal $5$-design with $2N$ rational points for the  Chebyshev measure $(1-t^2)^{-1/2}dt/\pi$ on $(-1,1)$, is equivalent to a system of Diophantine equations of type
\begin{align}
x_1^2 + x_2^2 + \cdots  + x_N^2 &= \frac{N}{2}, \label{eq:chev_even:2} \\
x_1^4 + x_2^4 + \cdots  + x_N^4 &= \frac{3N}{8}. \label{eq:chev_even:4}
\end{align}
The proof of Theorem~\ref{thm:spectrum0} is separated into proofs of the two following  propositions:

\begin{proposition}\label{thm:spectrum_if}
There exists an antipodal $5$-design with $2N$ rational points for Chebyshev measure $(1-t^2)^{-1/2}dt/\pi$ on $(-1,1)$ if
\begin{equation}\label{eq:spectrum1}
    \begin{split}
        N=3k \ \text{ with } \ k \ge 1, \\
        N=3k+1 \ \text{ with } \ k \ge 6, \\
        N=3k+2 \ \text{ with } \ k \ge 3.
    \end{split}
\end{equation}
\end{proposition}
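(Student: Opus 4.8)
The plan is to build all the required designs by composing a short list of explicit ``base'' designs via Proposition~\ref{prop:composition}. First note that the disjoint union of two antipodal $5$-designs is again an antipodal $5$-design: it is a $5$-design by Proposition~\ref{prop:composition}, and a union of point configurations each invariant under $x\mapsto-x$ is again invariant under $x\mapsto-x$. By Theorem~\ref{thm:fivesix}, \eqref{eq:fivesix1_0} supplies a one-parameter rational family of antipodal $5$-designs with $6$ points, i.e.\ with $N=3$ in the notation of \eqref{eq:chev_even:2}--\eqref{eq:chev_even:4}. Each coordinate $x_i(t)$ in \eqref{eq:fivesix1_0} is a non-constant rational function of $t$, so only finitely many $t\in\qq$ make two of the six numbers $\pm x_1(t),\pm x_2(t),\pm x_3(t)$ coincide, and for each fixed rational value each $x_i$ attains it for only finitely many $t$. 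Hence one can choose $t_1,t_2,\dots\in\qq$ inductively so that the associated $6$-point designs $X(t_1),X(t_2),\dots$ are pairwise disjoint; composing the first $k$ of them gives an antipodal $5$-design with $2N$ rational points for every $N=3k$ with $k\ge 1$.

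To reach the other two residue classes modulo $3$, I would exhibit two further base designs directly: an explicit antipodal $5$-design with $22$ rational points ($N=11$) and an explicit antipodal $5$-design with $38$ rational points ($N=19$) for the Chebyshev measure, that is, explicit lists of $11$ (resp.\ $19$) distinct rationals in $(-1,1)$ satisfying \eqref{eq:chev_even:2} and \eqref{eq:chev_even:4}; one then verifies the two power-sum identities by direct computation. In the inductive choice above one can in addition require each $X(t_j)$ to be disjoint from both of these fixed designs. Composing the size-$11$ design with $X(t_1),\dots,X(t_k)$ then gives an antipodal $5$-design with $2N$ points for every $N=11+3k=3(k+3)+2$, $k\ge 0$, and composing the size-$19$ design with $X(t_1),\dots,X(t_k)$ gives one for every $N=19+3k=3(k+6)+1$, $k\ge 0$. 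Together with the family $N=3k$ above, these are exactly the values listed in \eqref{eq:spectrum1}, which proves the proposition.

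All the ingredients except the two base designs are already at hand: the composition step is Proposition~\ref{prop:composition}, the disjointness bookkeeping is routine, and the $N=3$ family is Theorem~\ref{thm:fivesix}. The substantive task, and the step I expect to be the main obstacle, is to produce the base designs of sizes $11$ and $19$; this amounts to finding distinct rational solutions of the underdetermined system \eqref{eq:chev_even:2}--\eqref{eq:chev_even:4} with the prescribed numbers of unknowns, which can be settled by an ad hoc construction or a finite search. (That $11$ and $19$ are in fact the smallest sizes attainable in their congruence classes is a complementary statement belonging to the ``only if'' part of Theorem~\ref{thm:spectrum0} and is not needed here.)
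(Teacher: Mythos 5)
Your overall architecture is exactly the paper's: use the rational parametrization of Theorem~\ref{thm:fivesix} to get disjoint $6$-point antipodal $5$-designs for infinitely many parameter values (the paper does this via Lemma~\ref{lem:existence1}, which proves the pairwise intersections of the three coordinate-value sets are finite; your ``finitely many coincidences, choose parameters inductively'' argument is an equivalent bookkeeping), compose them with Proposition~\ref{prop:composition} to settle $N=3k$, and then graft on two fixed base designs of sizes $22$ and $38$ (i.e.\ $N=11$ and $N=19$) to cover $N\equiv 2$ and $N\equiv 1 \pmod 3$, respectively. Your arithmetic of the residue classes ($11+3j=3(j+3)+2$, $19+3j=3(j+6)+1$) matches the ranges in \eqref{eq:spectrum1}.

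The genuine gap is that you never produce the two base designs, and their existence is precisely the nontrivial content of the residue classes $1$ and $2 \pmod 3$; saying it ``can be settled by an ad hoc construction or a finite search'' is a plan, not a proof. Nothing proved earlier in the paper guarantees that rational solutions of \eqref{eq:chev_even:2}--\eqref{eq:chev_even:4} with $N=11$ or $N=19$ distinct points exist (indeed, by Proposition~\ref{thm:spectrum_onlyif} many small $N$ in these classes are impossible, so the existence at $N=11,19$ is delicate). The paper closes this by exhibiting explicit configurations: the antipodal pairs of
\[
	\tfrac{1}{90}\,(2,8,16,34,72,73,76,77,80,84,86)
\]
for $22$ points, and of
\[
	\tfrac{1}{126}\,(2, 4, 20, 32, 40, 44, 56, 83, 88, 100, 104, 106, 109, 110, 116, 118, 120, 122, 124)
\]
for $38$ points, the latter found by a divide-and-conquer computer search (Remark~\ref{rem:computational}), with the power-sum identities checked directly. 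Until you supply such explicit data (or some other existence argument) for these two sizes, your proof covers only the case $N=3k$.
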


\begin{proposition}\label{thm:spectrum_onlyif}
There does not exist an antipodal $5$-design with $2N$ rational nodes for $(1-t^2)^{-1/2}dt/\pi$ if $N \in \{1,2,4,5,7,8,10,13,16\}$.
\end{proposition}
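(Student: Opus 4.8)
The plan is to rule out each of the nine values $N\in\{1,2,4,5,7,8,10,13,16\}$ separately, using a mixture of elementary parity/size arguments and the genus computations already available from Section~\ref{sect:parametrization}. First I would dispose of the small cases $N=1,2$ directly: for $N=1$ the equation $x_1^2=1/2$ has no rational solution, and for $N=2$ the system $x_1^2+x_2^2=1$, $x_1^4+x_2^4=3/4$ forces $2x_1^2x_2^2=1/4$, hence $x_1^2x_2^2=1/8$, so $x_1^2,x_2^2$ are roots of $u^2-u+1/8=0$, which has no rational roots. The case $N=3$ is excluded by Theorem~\ref{thm:fivesix} only in the disjoint sense; here instead I must recall that we need \emph{distinct} $x_i$ (an antipodal design with $2N$ distinct points), so I should be careful to state that the relevant impossibility for $N=1,2$ is over $\qq$ irrespective of distinctness, and for the remaining values the obstruction is genuinely arithmetic rather than a distinctness issue.

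For the intermediate values I would exploit the curve-theoretic picture. Writing $s_i=x_i^2$, the system \eqref{eq:chev_even:2}--\eqref{eq:chev_even:4} becomes a linear-plus-quadratic pair of constraints on the $s_i\ge 0$, and squaring the first equation gives $\sum_{i<j}s_is_j = \bigl(N/2\bigr)^2/2 - (3N/16)= \bigl(2N^2-3N\bigr)/16$; this turns the problem into producing $N$ nonnegative rational squares $s_i$ with prescribed first and second elementary symmetric functions, which for small $N$ (namely $N=4,5,7$) reduces to finding rational points on explicit low-genus curves exactly as in the derivation of \eqref{eq:n4} and \eqref{eq:n5} and of the curves $C_1,C_2$ in the proof of Theorem~\ref{thm:Gegenbauer}. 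For $N=4,5$ the relevant hyperelliptic curves are genus $2$ and I would invoke Magma's \texttt{Chabauty} (as in Theorem~\ref{thm:Gegenbauer}) to enumerate $C(\qq)$ and check that every rational point yields either a negative $s_i$ or a non-distinct configuration. For $N=7$ one can instead use the composition principle (Proposition~\ref{prop:composition}) in reverse together with the classification of $6$-point designs from Theorem~\ref{thm:fivesix}: an antipodal $5$-design with $14$ points decomposes as $\{0\}\cup(\text{design with }12\text{ points})$ only in a restricted way, and I would argue that no admissible $7$-element multiset of squares exists by reducing mod small primes or by a direct finite search over the parametrization \eqref{eq:fivesix1_0}.

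For the larger values $N\in\{8,10,13,16\}$ the cleanest route is a $2$-adic obstruction on $\sum x_i^2 = N/2$ combined with the constraint $\sum x_i^4=3N/8$. Clearing denominators, if $x_i=p_i/q$ with $\gcd(p_1,\dots,p_N,q)=1$ then $\sum p_i^2 = (N/2)q^2$ and $\sum p_i^4=(3N/8)q^4$; analysing these modulo $16$ (using that fourth powers are $0$ or $1$ mod $16$ and squares are $0,1,4,9$ mod $16$) should eliminate the residues $N\equiv 8,10,13,16$ in the same spirit as Lemma~\ref{lem:deg7_Hermite} handled $n=7$ via the three-square theorem. The main obstacle I anticipate is precisely calibrating these congruence arguments so that they kill exactly the listed $N$ and no others — since Theorem~\ref{thm:spectrum0} asserts these are the \emph{only} exceptions, the congruence conditions must be sharp, and it may be necessary to supplement the mod-$16$ analysis with a Hasse-principle/genus-$0$ argument (a conic has a rational point iff it has points everywhere locally) to handle the borderline cases $N=10,13$ where a naive local count does not immediately close. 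Once each value is excluded, combining with Proposition~\ref{thm:spectrum_if} completes the proof of Theorem~\ref{thm:spectrum0}.
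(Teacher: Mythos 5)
Your treatment of $N=1,2$ is fine, and your instinct that the larger cases are killed by a $2$-adic/mod-$16$ obstruction is in the spirit of the paper's proof (the paper runs a single congruence argument, Lemma~\ref{lem:nonexistR}, uniformly for $N\in\{1,2,4,5,7,10,13\}$: clearing denominators in $\sum x_i^4=3N/8$ forces the common denominator to be even, and counting the number $T$ of odd numerators modulo $16$ forces $T\ge R$ with $R=6N\bmod 16>N$). But two parts of your plan contain genuine gaps. First, the route you propose for $N=4,5,7$ does not work: with $N$ unknowns $x_1,\dotsc,x_N$ and the two equations \eqref{eq:chev_even:2}--\eqref{eq:chev_even:4}, the solution set has dimension $N-2\ge 2$, so it is a surface or higher-dimensional variety, not a curve, and Chabauty/Mordell--Weil (which is what made the two-variable reductions \eqref{eq:n4}, \eqref{eq:n5} tractable in Theorem~\ref{thm:Gegenbauer}) simply does not apply; there is no finite list of rational points to enumerate. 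Your fallback for $N=7$ is also false: an antipodal design with $2N=14$ points contains no point at the origin, Proposition~\ref{prop:composition} cannot be ``used in reverse'' (a design need not decompose into smaller designs), and Theorem~\ref{thm:fivesix} classifies $6$-point designs only, so it gives no finite search space for a $14$-point configuration. In fact these cases need no geometry at all: the same mod-$16$ count of odd numerators that you reserve for $N\ge 8$ already rules them out.

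Second, your mod-$16$ analysis cannot close the case $N=16$ (nor does it quite suffice for $N=8$ without using both equations). For $N=8,16$ the right-hand side $3N/8$ is an integer, so the parity of the denominator must first be extracted by comparing the two equations modulo $4$ (via $m_i^2\equiv m_i^4\pmod 4$); for $N=8$ one then gets $T\equiv 0\pmod{16}$ with $1\le T\le 8$, a contradiction, but for $N=16$ the conclusion is only $T=16$ (all numerators odd), which is perfectly consistent modulo $16$ since $\sum m_i^4\equiv 16\equiv 0$. The paper resolves this by a finer mod-$32$ analysis: odd squares lie in $\{1,9,17,25\}\bmod 32$ with corresponding fourth powers $\{1,17,1,17\}$, and counting the four residue classes against both \eqref{eq:chev_even:2} and \eqref{eq:chev_even:4} produces incompatible congruences. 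Your suggested supplement — a Hasse-principle argument for a conic — is not applicable, since the relevant variety is not a conic and the obstruction is exactly the finer $2$-adic one above. So the proposal, as written, leaves $N=4,5,7$ and $N=16$ unproved.
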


\subsection{Proof of Proposition~\ref{thm:spectrum_if}} \label{subsect:existence}

Our proof makes use of the following lemma.

\begin{lemma}
\label{lem:existence1}
Define three subsets $A, B, C$ of $\mathbb{Q}$ by
\begin{equation}
\label{eq:ABC}
\begin{gathered}
A = \Big\{ \pm \frac{2t^2-22t-13}{14(t^2+t+1)} \mid t \in \mathbb{Z} \Big\}, \\
B = \Big\{ \pm \frac{-13t^2-4t+11}{14(t^2+t+1)} \mid t \in \mathbb{Z} \Big\}, \\
C = \Big\{ \pm \frac{11t^2+26t+2}{14(t^2+t+1)} \mid t \in \mathbb{Z} \Big\}.
\end{gathered}
\end{equation}
Then
\[
|A\cap B| , |B\cap C| , |C\cap A| < \infty
\]
\end{lemma}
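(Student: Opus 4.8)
The claim is a finiteness statement: each of the three intersections $A\cap B$, $B\cap C$, $C\cap A$ is a finite set. The plan is to show that each intersection condition forces a nontrivial polynomial relation between two integer parameters, which in turn defines a curve (or a finite union of fibres), and then to argue that only finitely many rational values can occur. Concretely, an element of $A\cap B$ arises from integers $s,t$ with
\[
\pm\frac{2s^2-22s-13}{14(s^2+s+1)} = \pm\frac{-13t^2-4t+11}{14(t^2+t+1)},
\]
and clearing denominators gives $\pm(2s^2-22s-13)(t^2+t+1) = \pm(-13t^2-4t+11)(s^2+s+1)$, a polynomial equation $P_{\varepsilon,\eta}(s,t)=0$ of bidegree $(2,2)$ for each choice of signs $\varepsilon,\eta\in\{\pm1\}$. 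The key point is that none of these polynomials vanishes identically and none has $(s^2+s+1)$ or $(t^2+t+1)$ as a factor, so each $P_{\varepsilon,\eta}$ is a genuine (possibly reducible) plane curve.

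First I would verify, for each of the $2$ essentially distinct sign combinations (after absorbing an overall sign), that the resulting affine curve $P_{\varepsilon,\eta}(s,t)=0$ has no component of the form $s=\text{const}$ or $t=\text{const}$; equivalently, that $P_{\varepsilon,\eta}$, viewed as a quadratic in $t$ with coefficients in $\qq[s]$, has leading and trailing coefficients that are not both zero and share no common root structure forcing infinitely many vertical lines. A cleaner route: treat $x = \frac{2s^2-22s-13}{14(s^2+s+1)}$ as a rational function $r_A(s)$ of degree $2$, and similarly $r_B, r_C$. Then $A\cap B = \{\,x : x = r_A(s) = \pm r_B(t)\text{ for some }s,t\in\zz\,\}$. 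For a fixed $x$ in the overlap, $r_A(s) = x$ has at most $2$ rational solutions $s$ and $r_B(t)=\pm x$ has at most $2$ rational solutions $t$; the obstruction is rather that there could be \emph{infinitely many} such $x$. So the real content is: the fibre product curve $\{(s,t) : r_A(s) = \varepsilon\, r_B(t)\}$ must not be a rational curve admitting an integral (or even rational) parametrization with infinitely many integer points. I would show this curve has positive genus, or—more elementarily—that it has only finitely many points with \emph{both} coordinates rational, by checking it is irreducible of genus $\ge 1$ (Lemma~\ref{lemma:genus_CI} and the genus machinery of Subsection~\ref{subsect:rational} apply after homogenizing), or by a direct height/growth argument: if $s,t\to\infty$ along the curve then $r_A(s), r_B(t)\to$ the ratio of leading coefficients, pinning down the asymptotic slope $t/s$, and substituting $t = \mu s + O(1)$ into $P_{\varepsilon,\eta}=0$ leaves a lower-degree equation in $s$ with finitely many integer solutions.

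The main obstacle is ruling out the possibility that some $P_{\varepsilon,\eta}$ factors off a rational curve of genus $0$ carrying infinitely many integer points (a line, or a conic with infinitely many integral points, or a curve parametrizable so that integer inputs give integer outputs). This is where one must actually compute: factor each bidegree-$(2,2)$ polynomial $P_{\varepsilon,\eta}(s,t)$ over $\qq$, and for any genus-$0$ component check whether it admits infinitely many integral points—by Siegel's theorem a genus-$0$ affine curve has infinitely many integral points only if it has at most two points at infinity, so one inspects the behaviour of the component at infinity. I expect that in every case either the curve is geometrically irreducible of genus $\ge 1$, or its rational components meet the line at infinity in $\ge 3$ points (after accounting for the denominators $s^2+s+1$, $t^2+t+1$, which have complex roots and contribute points at infinity), so that Siegel's theorem gives finiteness of integral points and hence $|A\cap B|<\infty$. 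The arguments for $B\cap C$ and $C\cap A$ are identical in structure, using the analogous polynomials built from $b_{\alpha,\beta}, c_{\alpha,\beta}, d_{\alpha,\beta}$ of \eqref{eq:PTE3}, so I would present the argument once and remark that the other two cases follow \emph{mutatis mutandis}.
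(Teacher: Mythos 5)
Your reduction to integral points on the bidegree-$(2,2)$ curves $P_{\varepsilon,\eta}(s,t)=0$ is sound in principle, but as written there is a genuine gap: the decisive facts are never established. You only say you \emph{expect} each curve to be geometrically irreducible of genus $\ge 1$, or else to have enough places at infinity for Siegel's theorem; none of the factorizations, genus computations, or counts of points at infinity is actually carried out, and without them no finiteness has been proved. Moreover, the ``more elementary'' growth argument you sketch rests on a wrong asymptotic picture: you speak of branches with $s,t\to\infty$ and an asymptotic slope $t/s=\mu$, but such branches do not exist here, precisely because $\lim_{|s|\to\infty} r_A(s)=\tfrac17$ while $\lim_{|t|\to\infty}\pm r_B(t)=\mp\tfrac{13}{14}$, and these constants are distinct. (Relatedly, the complex roots of the denominators $s^2+s+1$ and $t^2+t+1$ give finite complex points of the cleared-denominator curve, not points at infinity; the top-degree form of $P_{\varepsilon,\eta}$ is a nonzero multiple of $s^2t^2$, so only the two coordinate directions occur at infinity.) You never note or exploit the distinctness of these limiting values, yet that is the whole content of the lemma.

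The paper's proof is exactly this observation and nothing more: since the limits $\tfrac17$, $-\tfrac{13}{14}$, $\tfrac{11}{14}$ have pairwise distinct absolute values, there is an integer $N$ such that $r_A(t)\ne\pm r_B(u)$ whenever $|t|,|u|\ge N$; hence every element of $A\cap B$ equals $\pm r_A(t)$ with $|t|<N$ or $\pm r_B(u)$ with $|u|<N$, so $A\cap B$ is contained in a finite set, and the other two intersections are handled identically. If you wish to keep your curve-theoretic framing, the same observation shows directly that each curve $P_{\varepsilon,\eta}=0$ has no real points with both coordinates large, while for each fixed integer value of one variable the equation is a nonzero quadratic in the other (no vertical or horizontal line components, the easy check you did flag), which already gives finiteness of integer points with no appeal to genus or to Siegel's theorem. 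As it stands, the proposal neither completes the heavy route nor correctly executes the light one.
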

\begin{proof}[Proof of Lemma~{\rm \ref{lem:existence1}}]
We only prove $|A\cap B| < \infty$ since the proofs of the other cases are similar.
Since
\[
\lim_{|t|\to\infty} \frac{2t^2-22t-13}{14(t^2+t+1)} = \frac{1}{7}
\neq \mp \frac{13}{14} = \pm \lim_{|u|\to\infty} \frac{-13u^2-4u+11}{14(u^2+u+1)},
\]
there exists an integer $N$ such that if $|t|, |u| \ge N$ then
\[
\frac{2t^2-22t-13}{14(t^2+t+1)} \neq \pm\frac{-13u^2-4u+11}{14(u^2+u+1)}.
\]
Hence we have
\[
    A \cap B \subset
    \Big\{ \pm \frac{2t^2-22t-13}{14(t^2+t+1)} \mid t \in \mathbb{Z}, |t| < N \Big\}
    \cup \Big\{ \pm \frac{-13t^2-4t+11}{14(t^2+t+1)} \mid t \in \mathbb{Z}, |t| < N \Big\}.
\]
The right-hand side is a finite set, which implies $|A\cap B| < \infty$.
\end{proof}

\begin{proof}[Proof of Proposition~\ref{thm:spectrum_if}]
By Lemma~\ref{lem:existence1} there exists some positive integer $t_0$ such that for all integers $t_1, t_2, t_3 \ge t_0$, 
\begin{equation}
\label{eq:explicit0}
\alpha_{t_1} := \frac{2t_1^2-22t_1-13}{14(t_1^2+t_1+1)}, \quad
\beta_{t_2} := \frac{-13t_2^2-4t_2+11}{14(t_2^2+t_2+1)}, \quad
\gamma_{t_3} := \frac{11t_3^2+26t_3+2}{14(t_3^2+t_3+1)}
\end{equation}
and
their antipodal pairs $-\alpha_{t_1}, -\beta_{t_2}, -\gamma_{t_3}$ are mutually distinct.
Proposition~\ref{prop:composition} then implies that for any positive integer $k$
\begin{equation}
\label{eq:explicit1}
\begin{gathered}
\frac{1}{\pi} \int_{-1}^1 \frac{f(t)}{\sqrt{1-t^2}}dt \\
= \frac{1}{6k} \sum_{t=t_0}^{t_0+k-1} (f(\alpha_t) + f(\beta_t) + f(\gamma_t) + f(-\alpha_t) + f(-\beta_t) + f(-\gamma_t))
\end{gathered}
\end{equation}
is an antipodal $5$-design with $6k$ distinct rational nodes.

Next, we note that the antipodal pairs of the point configuration
\begin{equation}
\label{eq:22points-1}
(x_1, \ldots, x_{11}) = \frac{1}{90} (2,8,16,34,72,73,76,77,80,84,86)
\end{equation}
is an antipodal $5$-design with $22$ rational points.
By Lemma~\ref{lem:existence1}, there exists $t_0 \in \mathbb{Z}$ such that for all integers $t_1, t_2, t_3 \ge t_0$
\[
\pm \frac{2t_1^2-22t_1-13}{14(t_1^2+t_1+1)},
\; \pm \frac{-13t_2^2-4t_2+11}{14(t_2^2+t_2+1)},
\; \pm \frac{11t_3^2+26t_3+2}{14(t_3^2+t_3+1)}
\]
and $\pm x_i$ ($i=1,\ldots,11$) are mutually distinct. Then by Proposition~\ref{prop:composition}, we get an antipodal $5$-design with $6k+4$ distinct rational nodes for $k \ge 3$.

Finally, by taking the antipodal pair of  the point configuration
\begin{equation}
\label{eq:38points-1}
\begin{gathered}
(x_1, \ldots, x_{19}) \\
= \frac{1}{126}
(2, 4, 20, 32, 40, 44, 56, 83, 88, 100, 104, 106, 109, 110, 116, 118, 120, 122, 124),
\end{gathered}
\end{equation}
we obtain an antipodal $5$-design with $38$ rational points.
Again by combining Lemma~\ref{lem:existence1} and Proposition~\ref{prop:composition}, we obtain an antipodal $5$-design with $6k+2$ distinct rational nodes for $k \ge 6$.
\end{proof}

\begin{remark}\label{rem:computational} 
The derivation of the set (\ref{eq:38points-1}) depends on a computer search based on a divide-and-conquer algorithm.
\end{remark}

\subsection{Proof of Proposition~\ref{thm:spectrum_onlyif}} \label{subsect:nonexistence}

Our proof of Proposition~\ref{thm:spectrum_onlyif} makes use of elementary congruence conditions.
Despite the simplicity of our methods, the arguments below will work for a variety of probability measures including Hermite measure $e^{-t^2}dt/\sqrt{\pi}$ on $(-\infty,\infty)$; see Section~\ref{sect:conclusion} for the detail.

Below we consider a system of two Diophantine equations of type
\begin{align}
x_1^2 + x_2^2 + \cdots  + x_N^2 &= \frac{A}{2^\alpha C}, \label{eq:KH2} \\
x_1^4 + x_2^4 + \cdots  + x_N^4 &= \frac{B}{2^\beta D},  \label{eq:KH4}
\end{align}
where $\alpha \le 2$ (not necessarily positive), $1 \le \beta \le 4$ are integers, $A$, $B$, $C$, $D$ are odd integers, and $\gcd(A, C) = \gcd(B, D)  =1$.

\begin{example}\label{ex:spectrum1}
For example, (\ref{eq:KH2}) and (\ref{eq:KH4}) with $(A,B,C,D) = (3,9,1,1)$ and $(\alpha,\beta)=(1,3)$ are just two equations given in (\ref{eq:HK3}).
\end{example}

The following is a collection of well-known facts in elementary number theory. 
 
\begin{lemma}\label{lem:pwr_mod4}
Let $n$ be a positive integer. Then the following hold:
\begin{enumerate}
\item[(i)] 
	$n^2 \equiv 0 \pmod{4}$ if $n$ is even, and $n^2 \equiv 1 \pmod{4}$ if $n$ is odd;
\item[(ii)] 
	$n^4 \equiv 0 \pmod{16}$ if $n$ is even, and $n^4 \equiv 1 \pmod{16}$ if $n$ is odd;
\item[(iii)] 
	$n^2 \equiv n^4 \pmod{4}$ regardless of the parity of $n$. 
\end{enumerate}
\end{lemma}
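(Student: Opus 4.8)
The lemma is entirely elementary, so the plan is simply to split into the even and odd cases and expand; there is no genuine obstacle here, the only point requiring a moment's thought being part~(ii).

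First, for part~(i): writing $n=2m$ gives $n^2=4m^2\equiv 0\pmod 4$, and writing $n=2m+1$ gives $n^2=4m(m+1)+1\equiv 1\pmod 4$.

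For part~(ii), the even case is immediate from $n=2m$, since then $n^4=16m^4\equiv 0\pmod{16}$. For the odd case I would first sharpen the computation in~(i): since $m(m+1)$ is always even, $n=2m+1$ yields $n^2=8\ell+1$ for some integer $\ell$, whence $n^4=(8\ell+1)^2=64\ell^2+16\ell+1\equiv 1\pmod{16}$.

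Finally, part~(iii) follows by reducing~(i) and~(ii) modulo~$4$: if $n$ is even then $n^2\equiv n^4\equiv 0\pmod 4$, and if $n$ is odd then $n^2\equiv 1\pmod 4$, while $n^4\equiv 1\pmod{16}$ forces $n^4\equiv 1\pmod 4$ as well. Alternatively, one can argue directly from the factorization $n^4-n^2=n^2(n-1)(n+1)$: if $n$ is even then $4\mid n^2$, and if $n$ is odd then $(n-1)(n+1)$ is a product of two consecutive even integers, hence divisible by~$8$; in either case $4\mid n^4-n^2$.
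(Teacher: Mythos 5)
Your proof is correct. The paper itself offers no argument for this lemma at all---it is introduced merely as ``a collection of well-known facts in elementary number theory'' and used as a black box in the proofs of Lemmas~\ref{lem:nonexistR} and~\ref{lem:nonexist}---so your elementary case analysis (expanding $n=2m$ and $n=2m+1$, sharpening $n^2\equiv 1\pmod 8$ for odd $n$ before squaring, and either combining (i)--(ii) or factoring $n^4-n^2=n^2(n-1)(n+1)$ for (iii)) is exactly the standard verification the paper leaves implicit, and it fills the gap completely.
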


The following two lemmas are key in the proof of Proposition~\ref{thm:spectrum_onlyif}.

\begin{lemma}\label{lem:nonexistR}
There does not exist a rational solution of \eqref{eq:KH4} 
for any integer $1 \le N \le 15$, 
if  $1 \le \beta \le 4$ and $N < R$, where $R$ is the smallest positive integer such that 
$DR \equiv 2^{4-\beta}  B \pmod{16}$.
\end{lemma}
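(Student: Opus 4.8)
The plan is to reduce \eqref{eq:KH4} to an integer congruence modulo $16$ and then count how small the right-hand side forces $N$ to be. First I would clear denominators. Suppose $x_1,\dots,x_N\in\qq$ satisfy \eqref{eq:KH4}. Write $x_i = p_i/q$ over a common denominator $q\in\nn$, and among all such representations choose one with $q$ odd and $2\nmid \gcd(p_1,\dots,p_N,q)$ — this is possible since any factor of $2$ in the common denominator can be absorbed: if all $x_i$ had even $2$-adic denominator we could scale, and $2$-adic valuations on the left of \eqref{eq:KH4} are controlled because $n^4$ has valuation $0$ or $\ge 4$ by Lemma~\ref{lem:pwr_mod4}~(ii). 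Multiplying \eqref{eq:KH4} through by $2^{\beta}D q^4$ gives the integer equation
\[
    D\,(p_1^4 + \dots + p_N^4) = 2^{4-\beta} B \, (q/?)\cdots,
\]
so more carefully: after clearing, one obtains $2^{\beta} D (p_1^4+\dots+p_N^4) = B\, q^4$ (absorbing $D$'s coprimality with $B$ appropriately), and since $q$ is odd, $q^4\equiv 1\pmod{16}$. Hence
\[
    2^{\beta} D \sum_{i=1}^N p_i^4 \equiv B \pmod{16}.
\]

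Next I would analyze this congruence using Lemma~\ref{lem:pwr_mod4}~(ii): each $p_i^4$ is $\equiv 0$ or $1 \pmod{16}$, the latter exactly when $p_i$ is odd. Let $s$ be the number of odd $p_i$. Then $\sum p_i^4 \equiv s \pmod{16}$, so the congruence becomes $2^\beta D\, s \equiv B \pmod{16}$, i.e.\ $D s \equiv 2^{4-\beta}B \pmod{16}$ (valid since $1\le\beta\le 4$ means $2^{4-\beta}$ is a genuine nonnegative power of $2$ and multiplying the congruence mod $16$ by $2^{4-\beta}$ is legitimate when we also reduce — one must check this direction of implication carefully, but $2^\beta$ is invertible-up-to-the-relevant-power). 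Therefore $s$ must be a nonnegative integer solution of $Ds \equiv 2^{4-\beta}B \pmod{16}$. The smallest positive such $s$ is $R$ by definition. Since $0 \le s \le N$ and $s$ cannot be $0$ (as $B$ is odd, the right side $2^{4-\beta}B$ is nonzero mod $16$ when $\beta\le 4$; if $\beta=4$ then $2^{4-\beta}B=B$ is odd hence $\not\equiv 0$, and for $\beta<4$ one checks $R$ is still positive because... this needs the hypothesis $N\le 15$ to stay inside one residue class), we get $N \ge s \ge R$. Contradicting $N < R$, no rational solution exists.

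The constraint $1\le N\le 15$ enters because the congruence only pins down $s \bmod 16$, so $s$ could a priori be $R, R+16, R+32, \dots$; requiring $N<16$ (which $N\le 15$ gives) forces $s$ into the single residue $R$ in the range $[1,15]$, closing the argument. I expect the main obstacle to be bookkeeping the $2$-adic normalization cleanly: one must argue that a common denominator can always be taken odd, which relies on the fact that in \eqref{eq:KH4} the left-hand side's $2$-adic valuation is $\ge 4\cdot(\text{something})$ unless the $p_i$ have the right parity, together with the bound $\alpha\le 2$, $1\le\beta\le 4$ in the setup; getting the direction of the modular implication ($2^\beta Ds\equiv B \;\Rightarrow\; Ds\equiv 2^{4-\beta}B$, and that $R$ so defined is the correct threshold) right is the delicate point, everything after that is immediate from Lemma~\ref{lem:pwr_mod4}.
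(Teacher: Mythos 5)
There is a genuine gap, and it is at the very first step: you assume a rational solution of \eqref{eq:KH4} can be written over a common denominator $q$ that is \emph{odd}. This is impossible. Since $1\le\beta\le 4$, the right-hand side $B/(2^\beta D)$ has negative $2$-adic valuation, so at least one $x_i$ must have $v_2(x_i)<0$ and the reduced common denominator is necessarily \emph{even}; no amount of "absorbing" factors of $2$ can fix this, because the obstruction lies in the $x_i$ themselves, not in the choice of representation. The symptom of the error is visible in your own congruence: $2^\beta D\sum p_i^4\equiv B\pmod{16}$ with $q$ odd has no solutions at all (left side even, $B$ odd), so your argument, if valid, would prove nonexistence for \emph{every} $1\le N\le 15$ irrespective of $R$ --- but this is false, e.g.\ $(x_1,x_2,x_3)=(1/7,\,11/14,\,13/14)$ satisfies $x_1^4+x_2^4+x_3^4=9/8$, i.e.\ \eqref{eq:KH4} with $N=3$, $(B,D,\beta)=(9,1,3)$, and here $R=2\le N$. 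A second, related flaw is the passage from $2^\beta Ds\equiv B\pmod{16}$ to $Ds\equiv 2^{4-\beta}B\pmod{16}$: $2$ is not invertible modulo $16$, so this "division" is not legitimate, as you yourself suspected.

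The paper's proof goes the other way around. Writing $x_i=m_i/k$ with $\gcd(m_1,\dotsc,m_N,k)=1$, the integral form $2^\beta D\sum m_i^4=Bk^4$ together with $\beta\ge1$ and $B$ odd forces $k$ even, say $k=2h$, giving $D\sum m_i^4=2^{4-\beta}Bh^4$. With $T$ the number of odd $m_i$ (and $T\ge1$ because $k$ even and the gcd condition prevent all $m_i$ from being even), Lemma~\ref{lem:pwr_mod4}~(ii) gives $DT\equiv 2^{4-\beta}Bh^4\pmod{16}$. If $h$ is even this yields $T\equiv0\pmod{16}$, impossible for $1\le T\le N\le 15$; if $h$ is odd then $h^4\equiv1\pmod{16}$ and $DT\equiv 2^{4-\beta}B\pmod{16}$, whence $T\ge R$ and so $N\ge R$, contradicting $N<R$. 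Your intuition about where $N\le 15$ enters (pinning the residue class) is right, but the $2$-adic bookkeeping must be done with an even denominator and a case split on the parity of $h$, not with an odd one.
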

\begin{proof}[Proof of Lemma~\ref{lem:nonexistR}]
Suppose that $\{x_1, \ldots, x_N\}$ is a rational solution of \eqref{eq:KH4}. 
We write $x_i = m_i/k$, where $m_1, \ldots, m_N, k$ are positive integers and 
$\gcd(m_1, \ldots, m_N, k) = 1$. 
Then \eqref{eq:KH4} is equivalent to the equation
\begin{equation}\label{eq:nonexistR:1}
2^\beta D \sum_{i=1}^{N} m_i^4 = B k^4.
\end{equation}
Since $\beta \ge 1$,  we have $B k^4 \equiv 0 \pmod{2}$ and hence $k$ is even, say $k = 2h$. Replacing $x_i$ by $m_i/(2h)$ in \eqref{eq:KH4}, we have
\begin{align}
D \sum_{i=1}^{N} m_i^4 &= 2^{4-\beta} B h^4. \label{eq:nonexistR:2}
\end{align}

We denote by $T$ the number of odd integers among $m_1, \ldots, m_N$, that is 
\begin{equation}\label{eq:odd}
T = \abs{ \{  1 \le i \le N \mid m_i \equiv 1 \pmod{2} \} }.
\end{equation}
It may not be entirely obvious, but it can be shown that $T \ne 0$. Hence we have $1 \le T \le N$.

By reducing \eqref{eq:nonexistR:2} modulo $16$ and then using Lemma~\ref{lem:pwr_mod4}~(ii), we have 
\begin{equation}\label{eq:nonexistR:3}
DT \equiv 2^{4-\beta} B h^4 \pmod{16}.
\end{equation}
If $h$ is even, the congruence \eqref{eq:nonexistR:3} implies $T \equiv 0 \pmod{16}$, which contradicts $1 \le T \le N \le 15$. Hence, $h$ must be odd. 
Since $h^4 \equiv 1 \pmod{16}$, we have $DT \equiv 2^{4-\beta}B \pmod{16}$.
Therefore we have $T \ge R$, where $R$ is the smallest positive integer such that $DR \equiv 2^{4-\beta}B \pmod{16}$. 
\end{proof}

\begin{lemma}\label{lem:nonexist}
There does not exist a rational solution of \eqref{eq:KH2} and \eqref{eq:KH4} 
for any integer $1 \le N \le 15$ if one of the following conditions holds:
\begin{enumerate}
\item[(i)] $\alpha \le 0$ and $\beta \in \{3, 4\}$;
\item[(ii)] $(\alpha, \beta) \in \{1\} \times \{ 1, 2, 4 \} \cup \{2\} \times \{ 1, 2, 3 \}$;
\item[(iii)] $(\alpha, \beta)=(2,4)$ and $AD \not\equiv BC \pmod{4}$.
\end{enumerate}
\end{lemma}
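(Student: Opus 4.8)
The plan is to extend the denominator-clearing argument from the proof of Lemma~\ref{lem:nonexistR}, now using both equations \eqref{eq:KH2} and \eqref{eq:KH4} simultaneously. Suppose a rational solution $x_1,\dots,x_N$ exists; since only even powers of the $x_i$ occur we may take $x_i\ge 0$ and write $x_i=m_i/k$ with $m_1,\dots,m_N,k$ nonnegative integers, $k\ge 1$, and $\gcd(m_1,\dots,m_N,k)=1$. Substituting and clearing denominators gives
\[
    2^\alpha C \sum_{i=1}^N m_i^2 = A k^2, \qquad
    2^\beta D \sum_{i=1}^N m_i^4 = B k^4 .
\]
Exactly as in Lemma~\ref{lem:nonexistR}, $\beta\ge 1$ forces $k$ even, say $k=2h$, and the identities become $C\sum m_i^2 = 2^{2-\alpha}Ah^2$ and $D\sum m_i^4 = 2^{4-\beta}Bh^4$, with $2-\alpha\ge 0$ and $4-\beta\ge 0$ since $\alpha\le 2$, $\beta\le 4$. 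Writing $T$ for the number of odd $m_i$, the coprimality hypothesis together with $k$ even forces some $m_i$ to be odd, so $1\le T\le N\le 15$.

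The one genuinely structural step is to pin down the parity of $h$. If $h$ were even, then $h^4\equiv 0\pmod{16}$, so $D\sum m_i^4\equiv 0\pmod{16}$, hence $\sum m_i^4\equiv 0\pmod{16}$ ($D$ odd), and Lemma~\ref{lem:pwr_mod4}(ii) turns this into $T\equiv 0\pmod{16}$, contradicting $1\le T\le 15$. So $h$ is odd, whence $h^2\equiv 1\pmod 8$ and $h^4\equiv 1\pmod{16}$; this is the only place the bound $N\le 15$ and the mod-$16$ content of Lemma~\ref{lem:pwr_mod4}(ii) enter.

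With $h$ odd I would reduce both identities modulo $4$: using $h^2\equiv 1\pmod 8$ (and $2-\alpha\le 2$) together with Lemma~\ref{lem:pwr_mod4}(i) the square identity gives $CT\equiv 2^{2-\alpha}A\pmod 4$, and similarly the fourth-power identity with Lemma~\ref{lem:pwr_mod4}(ii) gives $DT\equiv 2^{4-\beta}B\pmod 4$. Multiplying the first congruence by $D$ and the second by $C$, each side becomes $\equiv CDT\pmod 4$, so
\[
    2^{2-\alpha} AD \equiv 2^{4-\beta} BC \pmod 4 .
\]
Since $A,B,C,D$ are odd, each side is determined mod $4$ by its leading power of $2$: $2^{2-\alpha}AD$ is odd, $\equiv 2$, or $\equiv 0\pmod 4$ according as $\alpha=2$, $\alpha=1$, or $\alpha\le 0$, and likewise $2^{4-\beta}BC$ with $\beta=4$, $\beta=3$, or $\beta\le 2$. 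In case (i) the left side is $\equiv 0$ while $\beta\in\{3,4\}$ makes the right side $\equiv 2$ or odd; in case (ii) each listed pair $(\alpha,\beta)$ sets an odd residue against $0$, a residue $2$ against $0$, or an odd residue against $2$; in case (iii), $(\alpha,\beta)=(2,4)$ collapses the congruence to $AD\equiv BC\pmod 4$, contradicting the hypothesis. I do not expect a serious obstacle; the only point requiring care is that $\alpha$ may be $0$ or negative, so $2^{2-\alpha}$ must be kept as a genuine (possibly large) power of $2$ and reduced mod $4$ correctly, after which the verification is routine bookkeeping paralleling Lemma~\ref{lem:nonexistR}.
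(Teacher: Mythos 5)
Your argument is correct and is essentially identical to the paper's proof: clear denominators, force $k=2h$ from $\beta\ge 1$, rule out $h$ even via the mod-$16$ count of odd $m_i$ (using $1\le T\le N\le 15$), and for $h$ odd combine the two congruences into $2^{2-\alpha}AD\equiv 2^{4-\beta}BC\pmod 4$, whose case analysis gives exactly the stated conditions. The only (welcome) addition is that you spell out why $T\neq 0$, a point the paper leaves as "not entirely obvious".
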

\begin{proof}[Proof of Lemma~\ref{lem:nonexist}]
Suppose $\{x_1, \ldots, x_N\}$ is a rational solution of \eqref{eq:KH2} and \eqref{eq:KH4}. 
We write $x_i = m_i/k$, where $m_1, \ldots, m_N, k$ are positive integers and 
$\gcd(m_1, \ldots, m_N, k) = 1$. 
Proceeding as in the proof of Lemma~\ref{lem:nonexistR}, we can show that $k$ is even, say $k = 2h$.
Replacing $x_i$ by $m_i/(2h)$ in \eqref{eq:KH2} and \eqref{eq:KH4}, we have
\begin{align}
C \sum_{i=1}^{N} m_i^2 &= 2^{2-\alpha} A h^2, \label{eq:KH2_dioph}\\
D \sum_{i=1}^{N} m_i^4 &= 2^{4-\beta} B h^4. \label{eq:KH4_dioph}
\end{align}

As with (\ref{eq:odd}), let $T$ be the number of odd integers among $m_1, \ldots, m_N$.
As discussed in the proof of Lemma~\ref{lem:nonexistR}, we have $1 \le T \le N$. 

Reducing \eqref{eq:KH2_dioph} and \eqref{eq:KH4_dioph} modulo $4$ and $16$, respectively, 
it follows from Lemma~\ref{lem:pwr_mod4}~(i) and (ii)
that
\begin{align}
CT &\equiv 2^{2-\alpha} A h^2 \pmod{4}, \label{eq:KH2_dioph:mod4} \\
D T &\equiv 2^{4-\beta} B h^4 \pmod{16}. \label{eq:KH4_dioph:mod16}
\end{align}

Suppose that $h$ is even.
Then it follows from \eqref{eq:KH4_dioph:mod16} that $T \equiv 0 \pmod{16}$.
However, since $T \ne 0$, we have $N \ge T \ge 16$. 

Suppose that $h$ is odd. 
It follows from Lemma~\ref{lem:pwr_mod4}~(iii) that $h^2 \equiv h^4 \equiv 1 \pmod{4}$. 
Multiplying $\eqref{eq:KH2_dioph:mod4}$ and $\eqref{eq:KH4_dioph:mod16}$ by $D$ and $C$ respectively, we have
$CDT \equiv 2^{2-\alpha} AD \equiv 2^{4-\beta} BC \pmod{4}$,  
namely
\begin{align*}
0 &\equiv 2^{4-\beta} BC \pmod{4}, &&\text{if }   \alpha \le 0, \\
2 AD&\equiv 2^{4-\beta} BC \pmod{4}, &&\text{if }   \alpha = 1, \\
AD&\equiv 2^{4-\beta} BC \pmod{4}, &&\text{if }   \alpha = 2.
\end{align*}
Since both $AD$ and $BC$ are odd, 
the above congruences lead to a contradiction if one of the following cases occurs:
\begin{enumerate}
\item $\alpha \le 0$ and $\beta \in \{3, 4\}$, 
\item $\alpha = 1$ and $\beta \in \{1, 2, 4\}$, 
\item $\alpha = 2$ and $\beta \in \{1, 2, 3\}$,
\item $(\alpha, \beta) = (2, 4)$ and $AD \not\equiv BC \pmod{4}$.
\end{enumerate}

In summary,  for any $N \le 15$, under the conditions given in the statement, 
the equations \eqref{eq:KH2_dioph} and \eqref{eq:KH4_dioph} have no integer solutions.
\end{proof}

We are ready to complete the proof of Proposition~\ref{thm:spectrum_onlyif}.

\begin{proof}[Proof of Proposition~\ref{thm:spectrum_onlyif}]
Let $N \in \{1, 2, 4, 5, 7, 10, 13\}$. Then by applying Lemma~\ref{lem:nonexistR} with $\alpha,\beta,R$ listed in Table~\ref{tab:value_R} below, we find that there does not exist a rational solution of the equations \eqref{eq:chev_even:2} and \eqref{eq:chev_even:4}.

Thus it remains to consider the case $N = 8,16$.
Suppose that there exists a rational solution of \eqref{eq:chev_even:2} and \eqref{eq:chev_even:4}.
We write $x_i=m_i/k$, where $m_1,\dotsc,m_N$ and $k$ are integers and $\gcd(m_1,\dotsc,m_N,k)=1$.
Let $N=8N'$. Then we have
\begin{align}
	m_1^2 + \dotsb + m_N^2 &= 4 k^2 N', \label{eq:quadratic3} \\
	m_1^4 + \dotsb + m_N^4 &= 3 k^4 N'. \label{eq:quartic3}
\end{align}
Since $m_i^2\equiv m_i^4\pmod{4}$, we have $4 k^2 N'\equiv 3 k^4 N' \pmod{4}$ and therefore $k^4 N'\equiv 0\pmod{4}$.
Since $N'\in\{1,2\}$, $k$ is even.
Then we have
\begin{equation}\label{eq:quartic4}
	m_1^4 + \dotsb + m_N^4 \equiv 0 \pmod{16}.
\end{equation}

As with (\ref{eq:odd}), let $T$ be the number of odd integers among $m_1, \dotsc, m_N$.
Since $m_1^4 + \dotsb + m_N^4 \equiv T \pmod{16}$,
by \eqref{eq:quartic4}, we have $T\equiv 0\pmod{16}$.
Since $\gcd(m_1, \dotsc, m_N, k)=1$ and $k$ is even, we have $1 \le T \le N$.
When $N=8$, this is a contradiction.

When $N=16$, we have $T=16$.
Note that $r^2 \equiv 1, 9, 17, 25 \pmod{32}$ for an odd integer $r$.
When $r^2 \equiv 1, 9, 17, 25 \pmod{32}$,
we have $r^4 \equiv 1, 17, 1, 17 \pmod{32}$, respectively.
For $j \in \{1, 9, 17, 25\}$, let
\[
	T_j = \lvert \{ 1 \le i \le N \mid m_i^2 \equiv j \pmod{32} \} \rvert.
\]
Since $k$ is even and $N'=2$, by \eqref{eq:quadratic3} and \eqref{eq:quartic3}, we have
\begin{align*}
	T_1 + 9 T_9 + 17 T_{17} + 25 T_{25} &\equiv 0 \pmod{32}, \\
	T_1 + 17 T_9 + T_{17} + 17 T_{25} &\equiv 0 \pmod{32}.
\end{align*}
Since $T_1 + T_9 + T_{17} + T_{25} = N = 16$, we have
\begin{align}
	8 T_9 + 16 T_{17} + 24 T_{25} + 16 &\equiv 0 \pmod{32}, \label{eq:T_j-1} \\
	16 T_9 + 16 T_{25} + 16 &\equiv 0 \pmod{32}. \label{eq:T_j-2}
\end{align}
Dividing \eqref{eq:T_j-1} and \eqref{eq:T_j-2} by $8$ and $16$, respectively, we have
\begin{align}
	T_9 + 2 T_{17} + 3 T_{25} + 2 &\equiv 0 \pmod{4}, \label{eq:T_j-3} \\
	T_9 + T_{25} + 1 &\equiv 0 \pmod{2}. \label{eq:T_j-4}
\end{align}
By \eqref{eq:T_j-4}, there exists an integer $S$ such that $T_9+T_{25}+1=2S$.
Therefore, by \eqref{eq:T_j-3}, we obtain
\[
	2 T_{17} + 2 T_{25} + 2 S + 1 \equiv 0 \pmod{4}.
\]
This is a contradiction.
\end{proof}

\begin{table}[ht!]
\caption{Values of $\alpha,\beta,R$ used in the proof of Proposition~\ref{thm:spectrum_onlyif}}
\label{tab:value_R}
\begin{tabular}{|l|c|c|c|c|c|c|c|}
\hline
$N$ &  $1$ & $2$ &  {4} & $5$  & $7$  & $10$ & $13$ \\ \hline
$(\alpha, \beta)$ &  $(1,3)$ & $(0, 2)$ & {$(-1,1)$} & $(1,3)$  & $(1,3)$ & $(0, 2)$ & $(1,3)$ \\ \hline
$R := 6N \mod 16$ & $6$  & $12$ & {$8$} & $14$ & $10$ & $12$ & $14$ \\ \hline
\end{tabular}
\end{table}

\section{Proof of Theorem~\ref{thm:PTEparametric}} \label{sect:PTE}

We start with a simple but significant observation that creates a novel connection between the PTE problem and rational interval designs.

\begin{proposition} \label{prop:PTE1}
Assume there exist two distinct rational $m$-designs for $\int_I w(t)dt$. Then there exists a pair of disjoint multisets of integers, say $X$ and $Y$, such that $(X,Y)$ is a solution of the PTE problem of degree $m$.
\end{proposition}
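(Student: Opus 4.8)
The plan is to convert the hypothesis into a statement about power sums, equalize the two designs by taking suitable numbers of copies, clear denominators to land in the integers, and finally delete overlaps to enforce disjointness. Write the two distinct rational $m$-designs as point configurations $X'=(x_1,\dots,x_{n_1})$ and $Y'=(y_1,\dots,y_{n_2})$, where $n_1,n_2$ are their respective sizes. By the defining property~\eqref{eq:QF1} of quadrature (equivalently, by Proposition~\ref{prop:HK_1}), $\tfrac{1}{n_1}\sum_{i=1}^{n_1}x_i^k=\int_I t^kw(t)\,dt=\tfrac{1}{n_2}\sum_{j=1}^{n_2}y_j^k$ for $k=0,1,\dots,m$, so $\sum_{i}x_i^k=n_1a_k$ and $\sum_{j}y_j^k=n_2a_k$.

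First I would equalize the normalizations: take $n_2$ copies of $X'$ and $n_1$ copies of $Y'$. Both are multisets of cardinality $n_1n_2$, and for each $k=1,\dots,m$ the $k$-th power sum of either multiset equals $n_1n_2a_k$. Next, choosing a common denominator $D$ of all the $x_i$ and $y_j$ and multiplying every entry by $D$, I obtain two multisets $\widetilde{X}$ and $\widetilde{Y}$ of integers whose $k$-th power sums agree for every $k=1,\dots,m$ (both equal $D^kn_1n_2a_k$).

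It then remains to make the two sides disjoint without breaking these equations. For this I would remove the multiset intersection $\widetilde{X}\cap\widetilde{Y}$ from both $\widetilde{X}$ and $\widetilde{Y}$; since the same integers with the same multiplicities are subtracted from each side, the resulting multisets $X$ and $Y$ are disjoint and still satisfy $\sum_{x\in X}x^k=\sum_{y\in Y}y^k$ for $k=1,\dots,m$, which is exactly~\eqref{eq:PTE1} with $M=m$.

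The only genuine obstacle is verifying that $X$ and $Y$ are nonempty (and of the same positive size $N$), i.e.\ that the overlap is a \emph{proper} sub-multiset of each side. Here I would use that an $m$-design consists of \emph{distinct} points by Definition~\ref{def:QF1}: if $n_1\neq n_2$ the two multisets of copies have elements of different multiplicities and cannot coincide, and if $n_1=n_2$ then $X'\neq Y'$ forces the underlying point sets to differ, so again they do not coincide. Since $\widetilde{X}$ and $\widetilde{Y}$ have equal cardinality $n_1n_2$ and are not equal as multisets, their intersection is proper, so $N:=n_1n_2-\lvert\widetilde{X}\cap\widetilde{Y}\rvert$ is a positive integer, and $[X]=_m^N[Y]$ is the desired PTE solution. (When the two designs already have the same number of points, the copying step is unnecessary: one clears denominators and deletes common elements directly.)
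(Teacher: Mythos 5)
Your proof is correct and follows essentially the same route as the paper's: equate the power sums of the two designs via the quadrature property and delete the multiset intersection to obtain a disjoint PTE pair. You are in fact more careful than the paper's own proof, which tacitly assumes the two designs have the same number of points and leaves implicit both the clearing of denominators (needed to land in $\zz$, as Problem~\ref{prob:PTE1} requires) and the nonemptiness of the multisets after removing the overlap.
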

\begin{proof}[Proof of Proposition~{\rm \ref{prop:PTE1}}]
Let $\tilde{X}$ and $\tilde{Y}$ be $m$-designs. Then it follows that
\[
 \frac{1}{n} \sum_{\tilde{x} \in \tilde{X}} \tilde{x}^k = \int_I t^k w(t)dt = \frac{1}{n} \sum_{\tilde{y} \in \tilde{Y}} \tilde{y}_i^k, \quad k=1,\ldots,m.
\]
Set $X = \tilde{X} \setminus (\tilde{X} \cap \tilde{Y})$ and $Y = \tilde{Y} \setminus (\tilde{X} \cap \tilde{Y})$.
Then $X$ and $Y$ satisfy the power-sum condition (\ref{eq:PTE1}).
\end{proof}

Before going to the proof of Theorem~\ref{thm:PTEparametric}, we make clear the definition of affine equivalence.

\begin{definition} [Affine equivalence] \label{def:equiv}
Two solutions $(\{x_1,\dotsc,x_N\},\{y_1,\dotsc,y_N\})$ and $(\{x'_1,\dotsc,x'_N\},\{y'_1,\dotsc,y'_N\})$ are \emph{affinely equivalent over $\qq$} if there exist $A,B\in\qq$ such that $\{Ax_i+B\}=\{x'_i\}$ and $\{Ay_i+B\}=\{y'_i\}$ as multisets.
\end{definition}
\begin{proof}[Proof of Theorem~\ref{thm:PTEparametric}]
The former part of the theorem follows from Theorem~\ref{thm:fivesix} and Proposition~\ref{prop:PTE1}.

To prove the latter part, we first define $(X,Y)$ by
\begin{equation}\label{eq:Borwein6}
\begin{gathered}
	x_1 = 2 n + 2 m, \quad
	x_2 = -n m - n - m + 3, \quad
	x_3 = n m - n - m - 3, \\
	y_1 = 2 n - 2 m, \quad
	y_2 = - n + n m + m + 3, \quad
	y_3 = m - n m - n - 3, \\
    x_{3+i} = -x_i, \; y_{3+i} = -y_i \quad \text{ for $i = 1,2,3$}
\end{gathered}
\end{equation}
where $m,n \in \mathbb{Q}$.
In comparison with (\ref{eq:Borwein6}),  the signs of $x_2$ and $y_2$ are changed so that the solution is \emph{linear} in the sense that
\[
x_1+x_2+x_3=y_1+y_2+y_3=0.
\]
Let $(X',Y')$ be the dehomogenization of the solution in (\ref{eq:PTE2}),
defined by
\begin{equation}
\begin{gathered}
	x'_1 = \frac{2s^2-22s-13}{14(s^2+s+1)}, \;
	x'_2 = \frac{-13s^2-4s+11}{14(s^2+s+1)}, \;
	x'_3 = \frac{11s^2+26s+2}{14(s^2+s+1)}, \\
	y'_1 = \frac{2t^2-22t-13}{14(t^2+t+1)}, \;
	y'_2 = \frac{-13t^2-4t+11}{14(t^2+t+1)}, \;
	y'_3 = \frac{11t^2+26t+2}{14(t^2+t+1)}, \\
    x'_{3+i}=-x'_i, \; y'_{3+i}=-y'_i \quad \text{ for $i=1,2,3$}.
\end{gathered}
\end{equation}
Assume $(X,Y)$ and $(X',Y')$ are affinely equivalent over $\qq$.
Then there exist $A,B\in\qq$ such that $\{Ax_i+B\}=\{x'_i\}$ and $\{Ay_i+B\}=\{y'_i\}$ as multisets.
Hence we have
\[
    \sum_{i=1}^6 (Ax_i+B) = \sum_{i=1}^6 x'_i.
\]
Since $(X,Y)$ and $(X',Y')$ are antipodal, we have $B=0$.

Now we consider the equations $Ax_1=x'_1$, $Ax_2=x'_2$ and $Ay_1=y'_1$.
Note that $Ax_1=x'_1$ and $Ax_2=x'_2$ imply $Ax_3=x'_3$ since both the solutions are linear.
By \eqref{eq:Borwein6}, we have
\begin{gather}
    2 A (m + n) = x'_1, \label{eq:equiv1} \\
    A (-n m - n - m + 3) = x'_2, \label{eq:equiv2} \\
    2 A (n - m) = y'_1. \label{eq:equiv3}
\end{gather}
By \eqref{eq:equiv1} and \eqref{eq:equiv3}, we have
\begin{equation}\label{eq:mn}
    m = \frac{x'_1 - y'_1}{4A}, \quad
    n = \frac{x'_1 + y'_1}{4A}.
\end{equation}
Substituting \eqref{eq:mn} into \eqref{eq:equiv2} gives
\begin{equation}\label{eq:quadratic_A}
    48 A^2 - 8 (x'_1 + 2 x'_2) A - (x'_1)^2 + (y'_1)^2 = 0.
\end{equation}
Then the discriminant of the left-hand side of \eqref{eq:quadratic_A}
is equal to
\[
    64 (4((x'_1)^2 + x'_1 x'_2 + (x'_2)^2) - 3 (y'_1)^2)
    = \left(\frac{12(2t+3)(4t-1)}{7(t^2+t+1)}\right)^2.
\]
We note that
\[
4((x'_1)^2 + x'_1 x'_2 + (x'_2)^2)=3
\]
as in the proof of
Theorem~\ref{thm:fivesix} (see Lemma~\ref{lem:5-6-1}).
By solving \eqref{eq:quadratic_A} and using \eqref{eq:mn}, we have
\begin{equation}\label{eq:Amn}
\begin{aligned}
    &(A,m,n) \\
    &= \left(-\frac{(2 s t + t + s + 2)(8 s t + 5 t + 5 s - 3)}{56 (s^2 + s + 1)(t^2 + t + 1)},
    \frac{3(t - s)}{2 s t + t + s + 2},
    -\frac{2 s t - 11 s - 11 t - 13}{8 s t + 5 t + 5 s - 3}\right), \\
    &\quad \left(\frac{(s - t)(2 s t - 11 s - 11 t - 13)}{56 (s^2 + s + 1)(t^2 + t + 1)},
    \frac{3(8 s t + 5 t + 5 s - 3)}{2 s t - 11 s - 11 t - 13},
    \frac{2 s t + s + t + 2}{s - t}
    \right).
\end{aligned}
\end{equation}
Substituting \eqref{eq:Amn} into \eqref{eq:Borwein6}, we have
\[
    A y_2 = \frac{-13 t^2 - 4 t + 11}{14(t^2 + t + 1)}, \quad
    \frac{11 t^2 + 26 t + 2}{14(t^2 + t + 1)},
\]
respectively.
The former means $Ay_2=y'_2$, which implies $Ay_3=y'_3$.
Similarly, the latter implies $Ay_2=y'_3$ and $Ay_3=y'_2$.
Therefore the solutions in (\ref{eq:PTE2}) are included in Borwein's solutions up to equivalence over $\qq$.
\end{proof}

The following observation is due to Hideki Matsumura.

\begin{remark}\label{rem:Chernick1}
Another parametric solution for $(M,N)=(5,6)$ was found by Chernick~\cite[pp.629-630]{C1937} as follows:
\begin{equation}\label{eq:Chernick1}
\begin{gathered}
	\alpha_1 = -5m^2+4mn-3n^2, \quad
	\alpha_2 = -3m^2+6mn+5n^2, \quad
	\alpha_3 = -m^2-10mn-n^2, \\
	\beta_1 =  -5m^2+6mn+3n^2, \quad
	\beta_2 = -3m^2-4mn-5n^2, \quad
	\beta_3 = -m^2+10mn-n^2, \\
\alpha_{3+i} = -\alpha_i, \beta_{3+i} = -\beta_i \quad \text{for $i = 1,2,3$}.
\end{gathered}
\end{equation}
Our solution (\ref{eq:PTE2}) and Chernick's one are not affinely equivalent over $\qq$.
Suppose contrary.
Then there exists $A \in \qq$ such that
\[
14A^2(5m^2+2mn+n^2)(5m^2-2mn+n^2) = 3.
\]
Clearly, we have $n \ne 0$.
Let $x = m/n$ and $C$ be the smooth curve of genus $1$ defined by
\[
C: y^2 = 42(5x^2+2x+1)(x^2-2x+5).
\]
We can verify that $C(\qq_3) = \emptyset$, where $\qq_3$ is the field of $3$-adic numbers.
Therefore we conclude that $C(\qq) = \emptyset$.
\end{remark}

\section{Concluding remarks and future works}
\label{sect:conclusion}

The Hilbert-Kamke equations \eqref{eq:HK0} can be dealt with the circle method,
or the Hardy-Littlewood method.
We refer the reader to~\cite{Wooley2025} for recent developments on Hilbert-Kamke problem involving the circle method.

In the case of antipodal solutions of the Hilbert-Kamke equations of degree $5$, we need to solve the system of equations
\begin{equation}\label{eq:HK6}
    \begin{aligned}
        x_1^2 + x_2^2 + \cdots  + x_N^2 &= N a_2, \\
        x_1^4 + x_2^4 + \cdots  + x_N^4 &= N a_4.
    \end{aligned}
\end{equation}
We can show that \eqref{eq:HK6} has a nonsingular real solution if and only if $N a_2^2 > a_4 > a_2^2$.
Note that the inequality $a_4 > a_2^2$ holds if $a_2$ and $a_4$ are the moments of
a symmetric probability measure.

More generally, we consider the system of homogeneous equations
\begin{equation}\label{eq:diagonal}
    \begin{aligned}
        c_1 x_1^k + c_2 x_2^k + \cdots  + c_s x_s^k &= 0, \\
        d_1 x_1^n + d_2 x_2^n + \cdots  + d_s x_s^n &= 0,
    \end{aligned}
\end{equation}
where $k>n\ge 1$ and $c_1,c_2,\ldots,c_s,d_1,d_2,\ldots,d_s$ are integers with $c_i d_i\neq 0$ for $1\le i\le s$.
Let $\Gamma^*(k,n)$ be the least integer $r$ such that, whenever $s\ge r$, the system \eqref{eq:diagonal} has a nontrivial $p$-adic solution for every prime $p$.
By the results of Davenport-Lewis~\cite[Theorem~1]{DL1963} and Leep-Schmidt~\cite[(2.11)]{LS1983},
we have $\Gamma^*(k,n)\le(k^2+1)(n^2+1)$ and hence $\Gamma^*(4,2)\le 85$.
Let $G^*(k,n)$ be the least integer $r$ such that, whenever $s\ge r$, the system \eqref{eq:diagonal} has a nontrivial integral solution if it has a nonsingular real solution and a nonsingular $p$-adic solution for every prime $p$.
Parsell~\cite[Theorem~1.1]{Parsell2002} obtained the values of $G^*(k,n)$ for small $k$ and $n$.
In particular, we have $G^*(4,2)\le 20$.
Combining these results, we infer that the system \eqref{eq:HK6} has a rational solution if $N\ge\max\{84,a_4/a_2^2\}$ and if \eqref{eq:HK6} defines a nonsingular variety.
Since $a_2=1/2$ and $a_4=3/8$ for Chebyshev measure $(1-t^2)^{-1/2}dt/\pi$, the above condition on $N$ becomes $N\ge 84$.
Theorem~\ref{thm:spectrum0} is sharper than the above bound obtained by the circle method.
It is our hope that the design-theoretic approaches used in the present paper could be applied to the study of Hilbert-Kamke equations for other classical measures.

\begin{center}
* * *
\end{center}

A natural question asks whether there exists a rational $5$-design with $2N+1$ points for Chebyshev measure $(1-t^2)^{-1/2}dt/\pi$, namely
\begin{align}
\frac{1}{2N+1}\left( f(0) + \sum_{i=1}^{N} \big( f(x_i) + f(-x_i) \big) \right)
&= \frac{1}{\pi} \int_{-1}^{1} \frac{f(t)}{\sqrt{1-t^2}} dt
\quad \text{ for every $f \in \calP_5(\rr)$}. \label{eq:chev_odd}
\end{align}
Such design is equivalent to a disjoint solution of the Diophantine equations
\begin{equation} \label{eq:chev_odd:24} \begin{gathered}
x_1^2 + x_2^2 + \cdots  + x_N^2 = \frac{2N+1}{4}, \\
x_1^4 + x_2^4 + \cdots  + x_N^4 = \frac{3(2N+1)}{16}.
\end{gathered} \end{equation}
By applying Lemma~\ref{lem:nonexist} with $(\alpha, \beta) = (2, 4)$ to \eqref{eq:KH2} and \eqref{eq:KH4}, we establish that for every $1 \le N \le 15$ there does not exist a rational solution of \eqref{eq:chev_odd:24}.
Moreover the nonexistence of antipodal rational $5$-designs with $33$ points can be proved in a similar manner to the proof of the nonexistence of $5$-designs with $32$ points.
In summary, we obtain the following result.

\begin{theorem}\label{thm:nonexist_chev_odd}
If $1 \le N \le 16$, then there do not exist antipodal $5$-designs with $2N+1$ rational points for $(1-t^2)^{-1/2}dt/\pi$.
\end{theorem}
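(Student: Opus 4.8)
The plan is to reduce the theorem to the unsolvability over $\qq$ of the Diophantine system~\eqref{eq:chev_odd:24}, and then to treat the two ranges $1\le N\le 15$ and $N=16$ separately. Any antipodal set of $2N+1$ distinct reals is carried to itself by $x\mapsto -x$, and since its cardinality is odd it must contain a fixed point of that involution, namely $0$; hence any antipodal $5$-design with $2N+1$ points has the shape $\{0,\pm x_1,\dots,\pm x_N\}$ and, by~\eqref{eq:chev_odd}, is equivalent to a disjoint rational solution of~\eqref{eq:chev_odd:24}. In particular it suffices to show that~\eqref{eq:chev_odd:24} has no rational solution whatsoever. Reading~\eqref{eq:chev_odd:24} in the template~\eqref{eq:KH2}--\eqref{eq:KH4} gives $A=2N+1$, $C=1$, $\alpha=2$ and $B=3(2N+1)$, $D=1$, $\beta=4$; here $A,B,C,D$ are all odd, $\gcd(A,C)=\gcd(B,D)=1$, and the standing constraints $\alpha\le 2$ and $1\le\beta\le 4$ hold. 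Since $BC-AD=2(2N+1)\equiv 2\pmod 4$, we have $AD\not\equiv BC\pmod 4$, so Lemma~\ref{lem:nonexist}~(iii) applies verbatim and rules out a rational solution of~\eqref{eq:chev_odd:24} for every $1\le N\le 15$.

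It remains to dispose of $N=16$, i.e.\ the case of $33$ nodes, which I would handle by a $2$-adic descent modelled on the $32$-node argument in the proof of Proposition~\ref{thm:spectrum_onlyif}. Write $x_i=m_i/k$ with $\gcd(m_1,\dots,m_{16},k)=1$, so that~\eqref{eq:chev_odd:24} becomes $4\sum m_i^2=33k^2$ and $16\sum m_i^4=99k^4$. Reducing the first equation modulo $4$ shows $k$ is even; with $k=2h$ we get $\sum m_i^2=33h^2$ and $\sum m_i^4=99h^4$, and comparing these modulo $4$ via $m_i^2\equiv m_i^4\pmod 4$ (Lemma~\ref{lem:pwr_mod4}~(iii)) forces $h$ even too. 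Then $\sum m_i^4\equiv 0\pmod{16}$, so by Lemma~\ref{lem:pwr_mod4}~(ii) the number $T$ of odd $m_i$ satisfies $T\equiv 0\pmod{16}$; since $k$ is even and $\gcd(m_1,\dots,m_{16},k)=1$ we have $1\le T\le 16$, whence $T=16$ and every $m_i$ is odd. Writing $k=4j$, one more reduction — a sum of $16$ odd squares is $\equiv 0\pmod 8$, whereas $\sum m_i^2=132j^2\equiv 4j^2\pmod 8$ — forces $j$ even, i.e.\ $8\mid k$; set $k=8\ell$.

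The endgame is a count modulo $32$. For odd $r$ one has $r^2\in\{1,9,17,25\}\pmod{32}$, and then $r^4\equiv 1\pmod{32}$ if $r^2\in\{1,17\}$ while $r^4\equiv 17\pmod{32}$ if $r^2\in\{9,25\}$. Letting $T_\nu=\#\{i:m_i^2\equiv\nu\pmod{32}\}$ for $\nu\in\{1,9,17,25\}$, so that $T_1+T_9+T_{17}+T_{25}=16$, the two equations of~\eqref{eq:chev_odd:24} translate (using $k=8\ell$, for which $\sum m_i^2\equiv 16\ell^2$ and $\sum m_i^4\equiv 0\pmod{32}$) into
\[
T_1+9T_9+17T_{17}+25T_{25}\equiv 16\ell^2\pmod{32},\qquad T_1+17T_9+T_{17}+17T_{25}\equiv 0\pmod{32}.
\]
Substituting $T_1=16-T_9-T_{17}-T_{25}$, the second congruence collapses to $16(1+T_9+T_{25})\equiv 0\pmod{32}$, so $T_9+T_{25}$ is odd, while the first collapses to $8(T_9+2T_{17}+3T_{25})\equiv 16(\ell^2-1)\pmod{32}$, i.e.\ $T_9+2T_{17}+3T_{25}\equiv 2(\ell^2-1)\pmod 4$; the right-hand side is even, so $T_9+2T_{17}+3T_{25}$ is even and hence $T_9+T_{25}$ is even. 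This contradiction settles $N=16$, completing the proof of the theorem.

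The first range and the final $T_\nu$-bookkeeping are routine. The part needing care — and where I expect essentially all the work — is nailing the $2$-adic valuation of $k$ in the $N=16$ case before reducing modulo $32$: it is not enough that $k$ be even, one needs first $4\mid k$ (from the compatibility of the two equations modulo $4$, which then forces every $m_i$ odd) and then $8\mid k$ (from the quadratic equation modulo $8$, a sum of sixteen odd squares being $\equiv 0\pmod 8$); only once $8\mid k$ does the right-hand side of the quartic congruence modulo $32$ vanish and the parity obstruction bite.
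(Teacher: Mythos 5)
Your proposal is correct and follows essentially the same route as the paper: for $1\le N\le 15$ you apply Lemma~\ref{lem:nonexist}~(iii) with $(\alpha,\beta)=(2,4)$ exactly as the paper does, and for $N=16$ your mod~$16$/mod~$32$ count of the classes $T_\nu$ is precisely the ``similar manner to the $32$-point case'' that the paper invokes without writing out. Your extra step of forcing $8\mid k$ (rather than stopping at $4\mid k$ and reducing directly) is slightly more than strictly necessary but harmless, and all the congruence computations check out.
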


Meanwhile, a $5$-design with $35$ rational points actually exists. Such an example is obtained by taking the antipodal pair of the point configuration
\begin{equation}
\label{eq:35points-1}
\begin{gathered}
(x_1, \ldots, x_{17}) \\
= \frac{1}{1092}(9,65,91,195,531,669,689,729,837,871,923,933,1001,1027,1053,1066,1079)
\end{gathered}
\end{equation}
plus the origin $0$. We thus obtain the following result by the same way as in the proof of Theorem~\ref{thm:spectrum0}.

\begin{theorem}\label{thm:spectrum_odd}
Let $N$ be a positive integer such that $N \equiv 5 \pmod{6}$ and $N \notin \{5,11\}$. Then there exists an antipodal $5$-design with $2N+1$ rational points. 
\end{theorem}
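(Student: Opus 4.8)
The plan is to run the composition argument from the proof of Proposition~\ref{thm:spectrum_if}, now anchored at the $35$-point example \eqref{eq:35points-1} instead of at the $22$- and $38$-point ones. First I would verify that the configuration formed by the origin together with the antipodal pairs $\pm x_i$, where $(x_1,\dots,x_{17})$ is the vector in \eqref{eq:35points-1}, is an antipodal $5$-design with $35$ rational points. Writing $x_i=m_i/1092$ with $m_i$ the listed integers, this reduces to checking the two identities $\sum_{i=1}^{17} m_i^2 = 35\cdot 1092^2/4$ and $\sum_{i=1}^{17} m_i^4 = 105\cdot 1092^4/16$, that is, that the system \eqref{eq:chev_odd:24} holds for $N=17$. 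This is a finite (machine-checkable) computation; as noted after \eqref{eq:35points-1}, the configuration was produced precisely to satisfy it.

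Next I would recall the parametric family \eqref{eq:fivesix1_0} of $5$-designs with $6$ rational points. Each such design is antipodal and contains no point equal to $0$, since the numerators $2t^2-22t-13$, $-13t^2-4t+11$, $11t^2+26t+2$ have no rational zeros (all three discriminants equal $588$, which is not a square). Exactly as in Lemma~\ref{lem:existence1} and the proof of Proposition~\ref{thm:spectrum_if}, the three rational functions in \eqref{eq:fivesix1_0} tend to distinct finite limits as $|t|\to\infty$, so one can choose integers $t_1<t_2<\cdots$ for which the associated $6$-point designs are pairwise disjoint and disjoint from the $35$-point configuration above, only finitely many integer parameters needing to be excluded.

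Applying Proposition~\ref{prop:composition} repeatedly to this pairwise-disjoint collection, for every $j\ge 0$ the union of the $35$-point design with $2j$ of the $6$-point designs is a $5$-design with $35+12j$ distinct rational points, and it is antipodal because every summand is. Setting $N=17+6j$, this is an antipodal $5$-design with $2N+1=35+12j$ rational points. Finally I would observe that $\{\,17+6j\mid j\ge 0\,\}=\{\,N\in\nn\mid N\equiv 5\pmod{6}\,\}\setminus\{5,11\}$, which yields the theorem.

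The only genuine input is the existence of the small $35$-point design, which --- like \eqref{eq:38points-1} --- rests on a computer search; the remainder is bookkeeping. The point to be careful about is that the union must consist of \emph{distinct} points, since otherwise one obtains a weighted rather than an equi-weighted design; this is precisely what the Lemma~\ref{lem:existence1}-type finiteness argument, combined with the fact that no $6$-point design meets the origin, guarantees.
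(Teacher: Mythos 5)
Your proposal is correct and is essentially the paper's own proof: the paper likewise takes the computer-found $35$-point configuration \eqref{eq:35points-1} (seventeen antipodal pairs plus the origin) and composes it, via Lemma~\ref{lem:existence1} and Proposition~\ref{prop:composition}, with pairwise disjoint members of the parametric family \eqref{eq:fivesix1_0}, exactly ``by the same way as in the proof of Theorem~\ref{thm:spectrum0}~(i)''. The only difference is cosmetic: you attach the $6$-point designs two at a time so as to land precisely on $N\equiv 5\pmod{6}$, whereas attaching them one at a time (as the paper's method allows) would even yield every $N\ge 17$ with $N\equiv 2\pmod{3}$.
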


\begin{problem}\label{prob:spectrum_odd}
Find an antipodal $5$-design with $37$ rational points, as the smallest open case.
\end{problem}

\begin{center}
* * *
\end{center}

Another question is a problem to find a rational $t$-designs for Chebyshev measure $(1-t^2)^{-1/2}dt/\pi$ and for larger $t$.
Here we only consider the case $t=7$.
An antipodal rational $7$-design with $2N$ points
\[
\frac{1}{2N} \sum_{i=1}^{N} \big( f(x_i) + f(-x_i) \big)
= \frac{1}{\pi} \int_{-1}^{1} \frac{f(t)}{\sqrt{1-t^2}} dt
\quad \text{ for every $f \in \calP_7(\rr)$}
\]
is equivalent to a disjoint solution of the Diophantine equations
\[
\begin{gathered}
x_1^2 + \cdots + x_N^2 = \frac{N}{2}, \\
x_1^4 + \cdots + x_N^4 = \frac{3N}{8}, \\
x_1^6 + \cdots + x_N^6 = \frac{5N}{16}.
\end{gathered}
\]
Note that if there do not exist antipodal $5$-designs with $n$ rational points, then there do not exist antipodal $7$-designs with $n$ rational points.
Then we can prove the following theorem by an argument similar to the proof of Proposition~\ref{thm:spectrum_onlyif}. 

\begin{theorem}\label{thm:nonexist_chev7}
If $1 \le N \le 16$, then there do not exist antipodal $7$-designs with $2N$ rational points for $(1-t^2)^{-1/2}dt/\pi$.
\end{theorem}

\begin{problem}
Determine whether there exists an antipodal $7$-design with $2N$ rational points for $n\ge 17$ and obtain an analogue of Theorem~\ref{thm:spectrum0}.
\end{problem}

\begin{center}
* * *
\end{center}

A challenging problem is to establish an analogue of Theorem~\ref{thm:spectrum0} for symmetric classical measures other than $(1-t^2)^{-1/2}dt/\pi$.
A significant case to be handled will be the Hermite measure $e^{-t^2}dt/\sqrt{\pi}$. A rational design for $e^{-t^2}dt/\sqrt{\pi}$ of type
\[
\frac{1}{2N} \sum_{i=1}^N (f(x_i) + f(-x_i)) = \frac{1}{\sqrt{\pi}} \int_{-\infty}^\infty f(t) e^{-t^2}dt
\quad \text{ for every $f \in \mathcal{P}_5(\rr)$},
\]
is equivalent to a system of Diophantine equations of type
\[
\begin{gathered}
x_1^2 + \cdots + x_N^2 = \frac{N}{2}, \\
x_1^4 + \cdots + x_N^4 = \frac{3N}{4}.
\end{gathered}
\]
Then Lemmas~\ref{lem:nonexistR} and~\ref{lem:nonexist} can be applied in the proof of the following result.

\begin{theorem}\label{thm:nonexist_gauss}
For any positive integer $1 \le N \le 15$, except for $N=14$, there do not exist antipodal $5$-designs with $2N$ rational points for $e^{-t^2}dt/\sqrt{\pi}$ on $(-\infty,\infty)$.
\end{theorem}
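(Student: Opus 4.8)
The plan is to clear denominators in the two displayed Diophantine equations and then split into three families according to whether $N$ is odd, $N\equiv 2\pmod 4$, or $4\mid N$, handling the first two families with Lemmas~\ref{lem:nonexistR} and~\ref{lem:nonexist} and the exceptional leftover cases directly. Writing $x_i=m_i/k$ with $\gcd(m_1,\dotsc,m_N,k)=1$, the system becomes $m_1^2+\dotsb+m_N^2=(N/2)k^2$ and $m_1^4+\dotsb+m_N^4=(3N/4)k^4$. For odd $N$ this has the shape of \eqref{eq:KH2}--\eqref{eq:KH4} with $A=N$, $B=3N$, $C=D=1$ and $(\alpha,\beta)=(1,2)$; for $N\equiv 2\pmod 4$ it has $A=N/2$, $B=3N/2$, $C=D=1$ and $(\alpha,\beta)=(0,1)$; for $4\mid N$ both right-hand sides are already integral.

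First I would dispose of the odd values $N\in\{1,3,5,7,9,11,13,15\}$: here $(\alpha,\beta)=(1,2)$ lies in $\{1\}\times\{1,2,4\}$, so Lemma~\ref{lem:nonexist}~(ii) rules out a rational solution immediately. Next, for $N\in\{2,6\}$, the members of the family $N\equiv 2\pmod 4$ with $N<8$, one has $\beta=1$, $B=3N/2$, $D=1$, and the least positive integer $R$ with $R\equiv 2^{3}B\pmod{16}$ equals $8$; since $N<8=R$, Lemma~\ref{lem:nonexistR}, applied to the quartic equation alone, gives nonexistence.

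For $N\in\{4,8,12\}$ the two lemmas do not apply (clearing denominators leaves $\beta\le 0$), so I would argue directly. Since $m_i^2\equiv m_i^4\pmod 4$ by Lemma~\ref{lem:pwr_mod4}~(iii), one gets $(N/2)k^2\equiv(3N/4)k^4\pmod 4$, which for each of these three values of $N$ is incompatible with $k$ odd; hence $k$ is even, so $k^4\equiv 0\pmod{16}$ and therefore $m_1^4+\dotsb+m_N^4=(3N/4)k^4\equiv 0\pmod{16}$. On the other hand, by Lemma~\ref{lem:pwr_mod4}~(ii) this sum is $\equiv T\pmod{16}$, where $T$ is the number of odd $m_i$, so $T\equiv 0\pmod{16}$. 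But joint coprimality together with $k$ even forces $T\ge 1$, while $T\le N\le 12$ --- a contradiction.

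The remaining case, $N=10$, is the one I expect to be the main obstacle: here $(\alpha,\beta)=(0,1)$ but the associated bound is $R=8\le N$, so Lemma~\ref{lem:nonexistR} does not apply. Instead I would carry out a finer congruence analysis modelled on the treatment of $N=16$ in the proof of Proposition~\ref{thm:spectrum_onlyif}. From $2(m_1^4+\dotsb+m_N^4)=15k^4$ one sees that $k=2h$ is even, and reducing $m_1^2+\dotsb+m_N^2=20h^2$ and $m_1^4+\dotsb+m_N^4=120h^4$ modulo $4$ and $16$ forces $h$ odd and $T=8$, i.e., eight of the $m_i$ are odd and two are even. Letting $q\in\{0,1,2\}$ be the number of the two even $m_i$ whose halves are odd, and $p$ the number of the eight odd $m_i$ with $m_i^2\equiv 9$ or $25\pmod{32}$, one obtains $q=1$ by reducing $m_1^2+\dotsb+m_N^2=20h^2$ modulo $8$, then $p$ even by reducing $m_1^4+\dotsb+m_N^4=120h^4$ modulo $32$, and then $p$ odd by reducing $m_1^2+\dotsb+m_N^2=20h^2$ modulo $16$ --- a contradiction. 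The genuinely delicate point is tracking the residues of the odd and of the even $m_i$ modulo $32$ (and the residues they induce on squares modulo $16$ and on fourth powers modulo $32$); the value $N=14$ must be excluded precisely because there the analogous chain of congruences is consistent, admitting the solution $q=3$.
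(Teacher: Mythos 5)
Your proof is correct and follows the paper's intended route: the paper merely asserts that Lemmas~\ref{lem:nonexistR} and~\ref{lem:nonexist} apply, and your case split (odd $N$ via Lemma~\ref{lem:nonexist}~(ii), $N=2,6$ via Lemma~\ref{lem:nonexistR} with $R=8$, a direct mod~$16$ argument for $N=4,8,12$ where clearing denominators gives $\beta\le 0$, and a mod~$32$ refinement for $N=10$ in the same style as the paper's treatment of $N=16$ in Proposition~\ref{thm:spectrum_onlyif}) supplies exactly the details the paper leaves unstated. I verified the $N=10$ chain ($T=8$ and $h$ odd, then $q=1$ from reduction mod~$8$, $p$ even from mod~$32$, $p$ odd from mod~$16$), which is sound, and your closing remark is consistent with the paper's explicit $28$-point design \eqref{eq:Hermite2}, which indeed has three even numerators congruent to $2$ modulo $4$.
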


The exceptional case $N=14$ is suggestive.
In fact, by taking the antipodal pairs of the point configuration
\begin{equation}\label{eq:Hermite2}
(x_1,\ldots,x_{14}) = \frac{1}{30} (1,2,3,4,5,8,9,13,14,15,25,39,40,42),
\end{equation}
we obtain an antipodal $5$-design with $28$ rational points.
However, the following problem is beyond the grasp of our methods used in the present paper, to which we intend to return elsewhere.

\begin{problem}\label{prob:series_gauss}
Find an infinite family of rational $5$-designs with $2N$ points for Hermite measure $e^{-t^2}dt/\sqrt{\pi}$ and then establish an analogue of Theorem~\ref{thm:spectrum0}.
\end{problem}

It is also a challenging problem to obtain an analogue of Theorem~\ref{thm:spectrum0} for Gegenbauer measure other than Chebyshev measure $(1-t^2)^{-1/2}dt/\pi$, and we leave it for a future work.

\section*{Acknowledgement}
This work started about 10 years ago when the third author first got contact with Koichi Kawada to inquire about the significance of Hilbert identity in the modern study of Waring problem.
The authors gradually came to realize the validity of the polynomial identity approach for the degree four and five cases, and finally ended up writing the present paper.
We would like to express our sincere appreciation to him.
It is also our great pleasure to thank Eiichi Bannai who kindly told us~\cite[p.208, Problem 2]{BBI2016} and then motivated us to study the explicit construction of rational interval designs.
The authors would also like to thank Hideki Matsumura for kindly informing us an interesting observation on Chernick's solution of the PTE problem (Remark~\ref{rem:Chernick1}).
The third author would like to thank Rossman Wayne for giving many valuable suggestions to improve the presentation of this paper.
The authors would like to thank the referee for helpful comments and suggestions.


\end{document}